\newcommand{\bR}{\mathbb R}
\newcommand{\bP}{\mathbb P}
\newcommand{\cA}{\mathcal A}
\renewcommand{\cH}{\mathcal H}
\renewcommand{\phi}{\varphi}
\newcommand{\conv}{{\rm conv}}
\newcommand{\cl}{{\rm cl}}
\newcommand{\inter}{{\rm int}}
\newcommand{\gen}[1]{\langle {#1} \rangle}
\newcommand{\Gr}{{\rm Gr}}
\newcommand{\GL}{{\rm GL}}
\newcommand{\PGL}{{\rm PGL}}
\theoremstyle{remark}
\newtheorem*{remark}{Remark}
\theoremstyle{plain}
\newtheorem{theorem}{Theorem}[section]
\newtheorem{lemma}[theorem]{Lemma} 
\newtheorem{proposition}[theorem]{Proposition}
\newtheorem{cor}[theorem]{Corollary}
\newtheorem*{rep@theorem}{\rep@title}
\newcommand{\newreptheorem}[2]{%
\newenvironment{rep#1}[1]{%
 \def\rep@title{#2 \ref{##1}}%
 \begin{rep@theorem}}%
 {\end{rep@theorem}}}
\theoremstyle{definition}
\newtheorem{definition}[theorem]{Definition}
\newtheorem{eg}[theorem]{Example}
\newtheorem{question}[theorem]{Question}
\title{ Exterior Cyclic Polytopes and Convexity of Amplituhedra }
\author{Elia Mazzucchelli\footnote{Max-Plank-Institut f\"ur Physik, Werner-Heisenberg-Institut,
D-85748 M\"unchen, Germany \\ \texttt{email: eliam@mpp.mpg.de}} \ and Elizabeth Pratt\footnote{University of California, Berkeley
 \\ \texttt{email: epratt@berkeley.edu}}}
\begin{document}

\maketitle
\thispagestyle{empty}

\begin{abstract}
The amplituhedron is a semialgebraic set in the Grassmannian. We study convexity and duality of amplituhedra. We introduce a notion of convexity, called \textit{extendable convexity}, for real semialgebraic sets in any embedded projective variety. We show that the $k=m=2$ amplituhedron is extendably convex in the Grassmannian of lines in projective three-space. In the process we introduce a new polytope called the \emph{exterior cyclic polytope}, generalizing the cyclic polytope. It is equal to the convex hull of the amplituhedron in the Plücker embedding. We undertake a combinatorial analysis of the exterior cyclic polytope, its facets, and its dual. Finally, we introduce the \textit{(extendable) dual amplituhedron}, which is closely related to the dual of the exterior cyclic polytope. We show that the dual amplituhedron for $k=m=2$ is again an amplituhedron, where the external matrix data is changed by the twist map.

%a class of polytopes, which we call exterior cyclic polytopes. The latter have a nice combinatorial description....
\end{abstract}

\clearpage
\thispagestyle{empty}

\tableofcontents

\clearpage
\setcounter{page}{1}

\section{Introduction}
\label{sec:Introduction}

The real Grassmannian $\Gr(k,n)$ is a projective variety of dimension $k(n-k)$ embedded in real projective space $\bP_\bR^{N}$ with $N=\binom{n}{k}-1$. Its points parameterize $k$-dimensional subspaces of $\bR^n.$ Each such point can be represented (non-uniquely) as the rowspan of a $k \times n$ matrix, whose $k \times k$ minors are called the \emph{Pl\"ucker coordinates} of the point.

Recently, certain semialgebraic subsets of the Grassmannian have become relevant to high-energy physics for the computation of scattering amplitudes. The most basic among these is the \emph{non-negative Grassmannian} $\Gr_{\geq 0}(k,n),$ defined as the locus in $\Gr(k,n)$ where the Pl\"ucker coordinates are non-negative. In other words, it is the intersection of $\Gr(k,n)$ with the non-negative orthant in~$\bP_\bR^{N}$. More generally, fix $k, m, n$ with $n \geq k+m$ and a real $(k+m) \times n$ matrix $Z$ with positive maximal minors. The \emph{amplituhedron} $\cA_{k,m,n}(Z)$ for $n \geq k+m$ is the image of the linear projection
\begin{equation}
\label{Z_tilde_map}
\begin{aligned}
    \wedge^kZ \, : \, \Gr_{\geq 0}(k,n) & \to \Gr(k,k+m) \\
    [M] & \mapsto [Z^TM] \, ,
\end{aligned}
\end{equation}
where $[M]$ denotes the point represented by the $k \times n$ matrix $M$. The special case $m=4$ is the one relevant to physics \cite{Arkani_Hamed_2014}. %More generally, one defines a \textit{Grassmann polytope} \cite{Lam:2014omj} as the image of a positroid cell under \eqref{Z_tilde_map}, and a \textit{Grasstope} as the image of the non-negative Grassmannian under \eqref{Z_tilde_map} without the assumption on $Z$ being positive \cite{Arkani_Hamed_2017,mandelshtam2025combinatoricsm1grasstopes}.

\begin{eg}[The case $k=1$]
Here the non-negative Grassmannian is simply the non-negative orthant in $\bP_\bR^{n-1},$ which may be seen as a simplex in the affine chart where the sum of the coordinates is nonzero. Let $Z_1, \, \ldots, \, Z_n \in \mathbb{R}^{m+1}$ denote the columns of $Z$, viewed as points in $\mathbb{P}^m_\mathbb{R}$. The amplituhedron $\mathcal{A}_{1,m,n}(Z) \subset \bP_\bR^m$ is the convex hull of the $n$ points $Z_1, \, \ldots, \, Z_n.$ Since $Z$ is totally positive, the amplituhedron has the combinatorial type of a cyclic polytope.
\end{eg}

For $n=k+m$, the amplituhedron is isomorphic to the non-negative Grassmannian ${\Gr_{\geq 0}(k,k+m)}$, so it is equal to the intersection of $\Gr(k,k+m)$ with the standard simplex in the ambient projective space. In this paper we study whether more general amplituhedra may be described as the intersection of the Grassmannian with a convex polytope. 
With this motivation, we define the \emph{exterior cyclic polytope}
$C_{k,m,n}(Z)$ as the projection of the non-negative orthant in $\bP_\bR^{N} \cong \mathbb{P}(\bigwedge^k \mathbb{R}^n)$ under the linear map $\wedge^kZ \, \colon \, \mathbb{P}(\bigwedge^k \mathbb{R}^n) \dashrightarrow \mathbb{P}(\bigwedge^k \mathbb{R}^{k+m})$. The case $k = 1$ recovers the cyclic polytope. We prove the following result for $k=m=2$.

\begin{reptheorem}{thm:int_k=m=2}
We have that
\begin{equation}
    \mathcal{A}_{2,2,n}(Z) = \Gr(2,4) \cap C_{2,2,n}(Z) \, ,
\end{equation}
for every real $4 \times n$ matrix $Z$ with positive maximal minors.
\end{reptheorem}

% Corollary~\ref{cor:m=k=2_conv_hull}
%Thus the amplituhedron $\cA_{2,2,n}(Z)$ is extendably convex for every positive $4 \times n$ matrix $Z.$

More generally, we define a set in the Grassmannian to be \emph{extendably convex} if it is the intersection of the Grassmannian with a convex body in the ambient Pl\"ucker space. This notion of convexity was first considered by Busemann~\cite{busemann1961convexity}. In this language, we obtain that the amplituhedron $\cA_{2,2,n}(Z)$ is extendably convex for every positive $4 \times n$ matrix~$Z.$

% We provide insight for the analogous result of Theorem~2 for $k=2$ and $m=n-4$. In particular, in Proposition~\ref{prop:boundcapamp} we determine the intersection of the amplituhedron with the exterior cyclic polytope's boundary, therefore characterizing a set of linear boundaries of the amplituhedron itself for $k=2$ and $m=n-4$. This result relies on a nice duality between the linear matroid of $\wedge^2 Z$ and Kalai's \emph{hyperconnectivity matroid}~\cite{brakensiek2024rigidity}.

Furthermore, we uncover a relationship between the dual of the exterior cyclic polytope and the \emph{twist} $\tau(Z)$ of the matrix $Z$. The twist map $\tau$ has been studied extensively by both mathematicians \cite{BFZ,Galashin_2020,Muller_2017} and physicists \cite{Arkani_Hamed_2018} in relation to parity duality. In the special case $k=2$ and $m=2,$ the physical interpretation of the twist map is that it exchanges the maximal-helicity-violating (MHV) sector of the one-loop amplitude with its parity conjugate ($\overline{\text{MHV}}$). % In physics, and in particular in the context of scattering amplitudes in $\mathcal{N}=4$ super Yang-Mills, it corresponds to the action $k \rightarrow n-k-4$ on the helicity of the particle, arising from the action of the parity operator in the spacetime picture. 
\begin{repproposition}{prop:parity dual}
    We have that
\begin{equation}
    \widetilde{C}_{2,2,n}(Z) =  C_{2,2,n}(W)^* \, ,
\end{equation}
where $W=\tau(Z)$ is the twist map applied to $Z$, according to Definition~\ref{def:twist_map}. 
\end{repproposition}
Here the polytope $\widetilde{C}_{k,m,n}(Z)$ is obtained from the polytope $C_{k,m,n}(Z)$ by keeping only those facet inequalities such that the supporting hyperplane is a Schubert divisor when restricted to $\Gr(k,k+m)$. We call these \emph{Schubert facets}. We prove Proposition \ref{prop:parity dual} by studying the linear matroid of $\wedge^k Z.$  For the case $k=2,$ this is equal to the dual of the \emph{hyperconnectivity matroid}, which was introduced by Kalai in \cite{kalai} to study highly connected graphs and has been studied more recently in the context of graph rigidity \cite{ brakensiek2024rigidity, Crespo_Ruiz2023}. We characterize Schubert facets for $k=m=2.$

\begin{reptheorem}{thm:nonschubert}
    The Schubert facets of the polytope $C_{2,2,n}(Z)$ are exactly the $\binom{n}{2}$ hyperplanes given by the vanishing of $\gen{Y \, \bar{i} \bar{j}}$ for $1 \leq i<j \leq n,$ where $\bar{i} \bar{j} := (i-1 i i+1) \cap (j-1 j j+1).$ Furthermore, these Schubert facets intersect transversally in $\Gr(2,4)$ for every $Z \in {\rm Mat}_{>0}(4,n)$.
\end{reptheorem}

The connection between the amplituhedron and physics arises in the context of positive geometries~\cite{Arkani_Hamed_2017}. A positive geometry is a triple $(X, X_{\geq 0}, \Omega)$ consisting of a complex algebraic variety $X$ defined over $\bR,$ a semialgebraic subset $X_{\geq 0}$ of its real points, and a complex meromorphic form on $X$ called the \emph{canonical form}. This form has simple poles on the algebraic boundary of $X_{\geq 0}$ and is holomorphic elsewhere, and its residues along the boundary satisfy an additional recursive property (see \cite[Section 2.1]{Arkani_Hamed_2017}). Amplituhedra describe scattering amplitudes in a specific quantum field theory, called $\mathcal{N}=4$ super Yang-Mills. Here $n$ is the number of scattered particles and $k$ determines the helicity of the particles. The conjecture~\cite{Arkani_Hamed_2014} is that the (tree) amplituhedron $\mathcal{A}_{k,4,n}(Z)$ is a positive geometry and that its canonical form computes the tree-level scattering amplitude.

The case $k=m=2$, while seemingly specific, is the only case in which this conjecture has been proven~\cite{Ranestad_2024}. It coincides with the one-loop amplituhedron for $k=0$ and $m=4$~\cite[Section 6.4]{Arkani_Hamed_2017}. 

Our motivation for studying convexity and duality comes from this context of positive geometries. In many
cases of physical significance the canonical function of the positive geometry has a representation
as an integral over a “dual” geometry with respect to a non-negative measure. In the prototypical example of a polytope in projective space, the canonical function can be expressed as the Laplace transform of the characteristic function on the dual polytope~\cite[Section 7.4.2]{Arkani_Hamed_2017}. The existence of such a representation relies on the convexity of the polytope. The long-standing hope, going back to the influential paper of Hodges~\cite{Hodges:2009hk}, is that such a description also exists for the canonical form of the amplituhedron. This would imply that amplitudes in $\mathcal{N}=4$ super Yang-Mills are volumes of certain geometric objects. Such a integral representation could also potentially imply that amplitudes possess a strong analytic property called complete monotonicity, which has been observed to hold for many functions appearing in quantum field theories~\cite{Henn:2024qwe}. In Section~\ref{sec:Dual amplituhedra} we give a candidate for the underlying semialgebraic set of the dual amplituhedron. This is related to the dual of the exterior cyclic polytope, and we describe it in the case of $k=m=2.$ 
\begin{repcor}{cor:dual_ampl}
    The extendable dual amplituhedron for $k=m=2$ in twistor space is equal to the twisted amplituhedron, which has a semialgebraic description as in eq.~\eqref{semialg_dual_ampl}.
\end{repcor}

\medskip
The outline of the paper is as follows. 
In Section~\ref{sec:Review of twistors and the twist map} we review twistor notation, which we use to denote intersections of linear spaces in projective space by elements in the (projective) exterior algebra. We also recall the definition of the twist map on matrices with positive maximal minors, first introduced in the context of cluster algebras in~\cite{BFZ}.
In Section~\ref{sec:Convexity and intersections with polytopes}, we extend Busemann's definition in \cite{busemann1961convexity} to any semialgebraic set in a real embedded real projective variety. That is, we call a set in such a variety \emph{extendably convex} if it is equal to the intersection of the variety with a convex body in the ambient projective space. We then present a technical lemma, related to the intersection of a polytope with a real embedded projective variety, which we use in the convexity proof in Section~\ref{sec:Convexity of Amplituhedra}. 

In Section~\ref{sec:Exterior Cyclic Polytopes} we define \emph{exterior cyclic polytopes} and explore their combinatorial properties. In Section~\ref{sec:Schubert hyperplanes} we study a special class of their facets cut out by Schubert hyperplanes, which will be useful in Section~\ref{sec:Dual amplituhedra}. In Section~\ref{sec:Convexity of Amplituhedra} we connect exterior cyclic polytopes to amplituhedra. We first prove a result about linear boundaries of the amplituhedron for $k=2$ and $m=n-4$, and then prove that the amplituhedron for $k=m=2$ is equal to the intersection of the exterior cyclic polytope with the Grassmannian. Finally, in Section~\ref{sec:Dual amplituhedra} we define a notion of duality for semialgebraic sets in the Grassmannian, and explore its concrete incarnation for the amplituhedron at $k=m=2$. Appendix~\ref{app:hasse} computes a poset of bases of the wedge power matroid $W_{2,2,n}$ and Appendix~\ref{app:Computations for W_{2,3,n}} computes all circuits of $W_{2,3,n}$, see Definition~\ref{def:wedge_matroid}.

% Since the discovery of the amplituhedron, positive geometries for other quantum field theories have been found~\cite{Herrmann_2022,Arkani-Hamed:2024jbp}. 
 
% Even though amplituhedra have been introduced more than a decade ago, they are still very mysterious objects in general. Recent advances in the understanding of amplituhedra's tiles and tilings have been made \cite{Karp_2017,Parisi:2021oql,Even-Zohar:2023del} for $m=1,2,4$ respectively. In \cite{Ranestad_2024} the authors prove that $\mathcal{A}_{2,2,n}(Z)$ is a positive geometry  for every $n \geq 4$, by studying its adjoint hypersurface, which is defined as the vanishing locus of the numerator of the canonical form. A similar analysis has been carried out also for one case of loop amplituhedra \cite{dian2024twoloopamplituhedron}.

\section{Review of twistors and the twist map}
\label{sec:Review of twistors and the twist map}

%In this section we fix some notation and review a map on positive matrices called parity duality. We first recall some results from multilinear algebra \cite{gelfand1994discriminants}. We identify an $r$-dimensional linear subspace $V$ of $\mathbb{R}^N$ spanned by $r$ linearly independent vectors $v_i \in \mathbb{R}^N$ with the kernel of an alternating form $\omega_V = v_1 \wedge \dots \wedge v_r \in \bigwedge^{r} \mathbb{R}^N$. According to this identification, given $\omega_V = v_1 \wedge \dots \wedge v_r$ and $\omega_W = w_1 \wedge \dots \wedge v_s$ with $r+s \geq N$, intersection $V \cap W$ is identified with
%\begin{equation}
%\begin{aligned}
%    \omega_{V \cap W} &= *((* \omega_V) \wedge (* \omega_W)) = \iota_{*\omega_V} (\omega_W) \\
%    &= \sum_{I \in \binom{[r]}{r+s-N}} \varepsilon(I,I^c) \, \det(v_{I^c}, w_1,\dots, w_s) \, v_I \, ,
%\end{aligned}
%\end{equation}
%where $*$ denotes the Hodge star operator and $\iota$ the interior product on the exterior algebra of $\mathbb{R}^N$. The sign $\varepsilon(I,I^c) = \pm 1$ is equal to the sign of the permutation $(I,I^c)$, where $I^c := [r] \setminus I$. Note that if $\dim(V \cap W) > N -r-s$, then $\omega_{V \cap W} = 0$.

From now on, we work over $\mathbb{R}$. Let $\mathbb{P}^N$ be the real projective space of dimension $N$ and let $\Gr(k,n)$ be the real Grassmannian of $k$-dimensional linear subspaces of $\mathbb{R}^n$.
Let $Z$ be a real $(k+m) \times n$ matrix with positive maximal minors, where $n \geq k+m$. We identify the $i$-th column of $Z$ with a point in $\mathbb{P}^{k+m-1}$, which we denote by $Z_i$ or simply by $i$. We use $(i_1 \, \cdots \, i_r)$ to denote the $(r-1)$-dimensional linear space in $\mathbb{P}^{k+m-1}$ spanned by the points $i_1, \, \dots, \, i_r$ with $1 \leq r \leq k+m$. Then $(i_1 \, \cdots \, i_r) $ may be viewed as an element of $\Gr(r,k+m)$, corresponding to $Z_{i_1}\wedge \dots \wedge Z_{i_r}$ via the Plücker embedding into $\bP(\bigwedge^r \bR^{k+m})$. Thus we can represent the (geometric) intersection of subspaces in $\mathbb{P}^{k+m-1}$ by an element in the exterior algebra $\mathbb{P} (\bigwedge \mathbb{R}^{k+m-1})$. That is, setting $t := N - r - s \geq 1$ we have that %\cite{gelfand1994discriminants}
\begin{equation}
\begin{aligned}
    (i_1 \, \cdots \, i_r) \cap (j_1 \, \cdots \, j_s) &= \sum_{I \in \binom{\{i_1,\dots,i_r\}}{t}} \varepsilon(I,I^c) \, \langle I^c, j_1  \dots  j_s \rangle \, I
    \\
    & =  \, \sum_{J \in \binom{\{j_1,\dots,j_s\}}{t}}  \varepsilon(J,J^c) \, \langle  i_1  \dots  i_r , J^c \rangle \, J \, ,
\end{aligned}
\end{equation}
where $\varepsilon(I,I^c) = \pm 1$ is equal to the sign of the permutation $(I,I^c)$, $I^c := \{1,\dots,r\} \setminus I$ and $\langle a_1 \dots  a_{k+m} \rangle$ denotes the corresponding maximal minor of $Z$. More generally, in the following we use angle brackets to denote the determinant of matrices obtained by stacking together the arguments in the brackets. We also used the shorthand notation $I = (i_{a_1} \, \cdots \, i_{a_t})$ if $I$ is equal to $\{a_1,\dots,a_t \} \in \binom{\{i_1,\dots,i_r\}}{t}$, and similarly for $J$. In the special case where $r+s=N$, we have that
\begin{equation}
    (i_1 \, \cdots \, i_r) \cap (j_1 \, \cdots \, j_s) := \langle  i_1  \dots  i_r  j_1  \dots j_s \rangle \, ,
\end{equation}
which vanishes if and only if the subspaces $ (i_1 \, \cdots \, i_r) $ and $ (j_1 \, \cdots \, j_s)$ intersect non-trivially.

\begin{eg}
Let $k+m=4.$ In $\mathbb{P}^3$, a plane $(i_1 i_2 i_3)$ intersects a line $(j_1j_2)$ in a single point whenever $|\{i_1,i_2,i_3\} \cap \{j_1,j_2\}|\leq 1$, since $Z$ has nonzero maximal minors. This point is given by
\begin{equation}\label{plane_int_line}
\begin{aligned}
    (i_1 i_2 i_3) \cap (j_1j_2) &= i_1 \, \langle i_2 i_3 j_1 j_2 \rangle  -  i_2 \, \langle i_1 i_3 j_1 j_2 \rangle  + i_3 \, \langle i_1 i_2 j_1 j_2 \rangle \\
    &= j_1 \langle i_1i_2i_3 j_2 \rangle -  j_2 \langle i_1i_2i_3 j_1 \rangle \, .
\end{aligned}
\end{equation}
Similarly, two distinct planes $(i_1 i_2 i_3)$ and $(j_1 j_2 j_3)$ intersect in the line
\begin{equation}\label{plane_int_plane}
\begin{aligned}
    (i_1 i_2 i_3) \cap (j_1j_2j_3) &= (i_1  i_2) \, \langle  i_3  j_1 j_2 j_3 \rangle -  (i_1 i_3) \, \langle  i_2 j_1 j_2 j_3 \rangle +  (i_2 i_3) \, \langle  i_1 j_1 j_2 j_3 \rangle  \\
    &= (j_1 j_2) \, \langle   i_1 i_2 i_3 j_3 \rangle -  (j_1 j_3) \, \langle   i_1 i_2 i_3 j_2 \rangle +  (j_2 j_3) \, \langle   i_1 i_2 i_3 j_1 \rangle  \, .
\end{aligned}
\end{equation}
\end{eg}

We now move to the definition of the twist map on matrices. We denote by $\text{Mat}_{> 0}(k+m,n)$ the space of matrices with positive maximal minors, where we assume that $n\geq k+m$. We call an element in $\text{Mat}_{> 0}(k+m,n)$ a \textit{positive matrix}. Let $Z$ be in $ \text{Mat}_{> 0}(k+m,n)$ and denote by $Z_i \in \mathbb{R}^{k+m}$ the $i$-th column of $Z$. We use the \textit{twisted cyclic} ordering on the index set $[n]:=\{1,\dots,n\}$, for which $Z_{n+i} = (-1)^{k-1} \, Z_i$. We identify $\bigwedge^{k+m-1} \mathbb{R}^{k+m}$ with $ \mathbb{R}^{k+m}$ via the standard inner product. Consider then the vector in $\mathbb{R}^{k+m}$ defined by
\begin{equation}\label{eq:ibardef}
        W_i := Z_{i-m+1} \wedge Z_{i-m+2} \wedge \dots \wedge Z_{i} \wedge \dots \wedge Z_{i+k-1} \, , \qquad i \in [n] \, .
\end{equation}

Geometrically, we may think of $W_i$ as the normal to the hyperplane in $\bR^{k+m}$ spanned by the vectors $Z_{i-m+1} , \dots, Z_{i+k-1}$.
\begin{eg}
  Let $k=m=2$. Then the vector $W_i \in \mathbb{P}^3_\mathbb{R}$ is normal to the plane $\bar{i}:=(i-1  i  i+1)$, where the indices are taken modulo the twisted cyclic action described above. Moreover, 
\begin{equation}\label{ij_bar}
    \overline{ij} :=  \bar{i} \cap \bar{j} = W_i \wedge W_j  \ \in \ \mathbb{P}(\bigwedge^2 \mathbb{R}^4) \, .
\end{equation}  
According to the twistor notation, we identify $\overline{ij}$ with the line in $\mathbb{P}^3$ given by the intersection of the two planes $\bar{i}$ and $\bar{j}$. The lines \eqref{ij_bar} will be relevant when discussing the exterior cyclic polytope.
\end{eg}

\begin{definition}[Twist map]\label{def:twist_map}
We now define the \emph{twist map} as
\begin{align*}
    \tau \, : \, \text{Mat}_{> 0}(k+m,n) & \to \text{Mat}(k+m,n) \, , \\
    Z \mapsto W \, ,
\end{align*}
where $W$ is the matrix with columns $W_1, \, \ldots, \, W_n$ as in \eqref{eq:ibardef}.   
\end{definition}
The twist map was first introduced by Bernstein, Fomin, and Zelevinsky~\cite{BFZ} in the context of cluster algebras. Up to sign changes and a cyclic shift, the twist map in Definition~\ref{def:twist_map} coincides with the \textit{right twist} as defined by Muller and Speyer~\cite{Muller_2017}, but we will follow the convention of~\cite[Definition 4.1]{Galashin_2020}. Muller and Speyer also define a left twist and show that the right and left twists are mutual inverse diffeomorphisms of $\text{Mat}_{> 0}(k+m,n)$; see \cite[Theorem 6.7, Corollary 6.8]{Muller_2017}. In particular, $\tau$ maps a positive matrix to a positive matrix.

For $m=4$ the twist map is related to the notion of \emph{parity duality} in physics \cite{Arkani_Hamed_2018}, which has been generalized purely at the level of the amplituhedron for any $m$ \cite{Galashin_2020}.

\section{Convexity and intersections with polytopes}\label{sec:Convexity and intersections with polytopes}

The goal of this section is to define a notion of convexity for semialgebraic sets in real embedded projective varieties, and in particular for semialgebraic subsets of the Grassmannian.  The following definition of semialgebraic set is used in \cite{Arkani_Hamed_2017}, where positive geometries were first defined. 

\begin{definition}[Semialgebraic set]\label{def:semialg}
    A \emph{basic semialgebraic cone} $S$ in $\bR^{N+1}$ is a subset defined by homogeneous equations and inequalities. A \emph{semialgebraic cone} is a finite boolean combination of basic semialgebraic cones. A \emph{semialgebraic set} in real projective space $\bP^N$ is the image of a semialgebraic cone under  $\pi: \bR^{N+1} \setminus \{0\} \to \bP^N.$ Semialgebraic subsets of a projective variety $X$ are semialgebraic subsets of the  ambient projective space which lie in $X.$ 
\end{definition}
We will study convexity in the projective setting. We call a subset $S \subset \mathbb{P}^N$ \emph{convex} if it is of the form $\mathbb{P}(C)$ for some convex set $C \subset \mathbb{R}^{N+1}$, i.e. the image of $C$ under the projection map $\mathbb{R}^{N+1} \setminus \{0\} \rightarrow \mathbb{P}^N, \ x \mapsto [x]$. The notion of convex hull is independent on the choice of affine chart on projective space, or equivalently on the choice of a hyperplane $H$ at infinity, provided that $H$ is disjoint from $S$, see \cite[Lemma 2.2]{Kummer_2022}. 
\begin{definition}\label{def:convexhull}
Let $S \subset \mathbb{P}^N$ be a semialgebraic set.
\begin{enumerate}
    \item The set $S$ is \textit{very compact} if there is a real hyperplane $H \subset \mathbb{P}^N$ such that $H \cap S = \emptyset$.
    \item If $S$ is connected and very compact, then the \textit{convex hull} of $S$ is 
    \begin{equation}
        \conv(S) = \iota_H( \conv(\iota_H^{-1}(S)) \subset \mathbb{P}^N \, ,
    \end{equation}
    where $\iota_H \, \colon \,  \mathbb{R}^N \cong \mathbb{P}^N \setminus H \hookrightarrow \mathbb{P}^N$ is the affine chart associated to $H$.

\end{enumerate}
\end{definition}

\subsection{Extendable Convexity}
\label{subsec:Extendable Convexity}

We now introduce a notion of convexity for semialgebraic sets in a real embedded projective variety $X \subset \mathbb{P}^N$, arising from convexity in the ambient projective space. The main case of interest to us is when $X = \Gr(k,n)$ in its Plücker embedding. This was first considered by Busemann in \cite{busemann1961convexity}.  

\begin{definition}
    Let $X \subset \mathbb{P}^N$ be a real variety and let $S \subset X$ be a connected, semialgebraic, and very compact set. We define the \textit{convex hull of $S$ in $X$} to be
    \begin{equation}
        \conv_X(S) := X \cap \conv(S)  \, .
    \end{equation}
    We say that $S$ is \textit{extendably convex} if $S=\conv_X(S)$.% \elia{Do we want to make part of the definition that $S = \cl_X(X \cap \inter(\conv(S)))$? }
\end{definition}

The convex hull of a semialgebraic set is semialgebraic by the Tarski-Seidenberg Principle; see~\cite[Section 1.4]{bochnak2010real}. Therefore, $\conv_X(S)$ is a semialgebraic set in $X$. Note that $S$ is extendably convex in $X$ if and only if it is equal to the intersection of $X$ with some convex set in $\mathbb{P}^N$.

Because this notion of convexity is inherited from that in projective space, we immediately obtain the following properties, which hold for the convex hull of cones in real vector spaces.

\begin{proposition}
    Let $X \subset \mathbb{P}^N$ be a real variety and let $S,T \subset X$ be connected, semialgebraic, and very compact set. The following properties hold.
\begin{enumerate}
    \item Monotonicity: $S \subset T \implies \conv_X(S) \subset \conv_X(T)$.
    \item Idempotence: $\conv_X(\conv_X(S)) = \conv_X(S)$.
    \item  Antiexchange: If $S$ is extendably convex, $x,y \notin S$, $x\neq y$ and $x \in \conv_X(S \cup \{y\})$, then $y \notin \conv_X(S \cup \{x\})$.
\end{enumerate}
\end{proposition}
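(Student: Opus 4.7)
The plan is to derive each of the three properties by transferring the analogous statement about ordinary convex hulls through the identity $\conv_X(S) = X \cap \conv(S)$. Since $S$ and $T$ are very compact, we can fix an affine hyperplane $H \subset \bP^N$ disjoint from $S \cup T$ (and, in the antiexchange step, from the two extra points $x$ and $y$); this reduces everything to convex-combination arguments in the affine chart $\bP^N \setminus H \cong \bR^N$, where the standard Euclidean facts about convex hulls apply.

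Monotonicity is immediate: $S \subset T$ implies $\conv(S) \subset \conv(T)$, and intersecting with $X$ gives $\conv_X(S) \subset \conv_X(T)$. For idempotence, note that $S \subset X \cap \conv(S) = \conv_X(S)$, so by monotonicity $\conv_X(S) \subset \conv_X(\conv_X(S))$. Conversely, $\conv_X(S) \subset \conv(S)$ forces $\conv(\conv_X(S)) \subset \conv(\conv(S)) = \conv(S)$, and intersecting with $X$ gives the reverse inclusion.

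The only real content is antiexchange. Assume $S$ is extendably convex, so $S = X \cap \conv(S)$. Since $x \in \conv_X(S \cup \{y\}) \subset X$ and $x \notin S$, the identity $S = X \cap \conv(S)$ forces $x \notin \conv(S)$; similarly $y \notin \conv(S)$. Writing $x = \lambda y + (1-\lambda)s$ with $s \in \conv(S)$ and $\lambda \in [0,1]$, I get $\lambda > 0$ (else $x \in \conv(S)$) and $\lambda < 1$ (else $x=y$), so $\lambda \in (0,1)$. Suppose for contradiction $y \in \conv_X(S \cup \{x\})$, i.e.\ $y = \mu x + (1-\mu)s'$ with $s' \in \conv(S)$ and, by the same reasoning, $\mu \in (0,1)$. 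Substituting the expression for $x$ and solving for $y$ yields
\begin{equation*}
y \;=\; \frac{\mu(1-\lambda)}{1-\mu\lambda}\, s \;+\; \frac{1-\mu}{1-\mu\lambda}\, s',
\end{equation*}
a convex combination of elements of $\conv(S)$, so $y \in \conv(S)$, which is a contradiction.

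The main obstacle is not conceptual but rather bookkeeping: I need to arrange a single affine chart in which $\conv(S)$, $\conv(S \cup \{x\})$, and $\conv(S \cup \{y\})$ are all simultaneously well-defined, which requires picking $H$ to avoid both $S$ and the two auxiliary points $x, y$. Because $S$ is very compact and $\{x,y\}$ consists of two points outside $S$, a small perturbation of any hyperplane witnessing very compactness of $S$ suffices. Once this is done, the argument reduces to the classical antiexchange property of convex hulls in $\bR^N$.
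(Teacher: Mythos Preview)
Your proof is correct and follows exactly the approach the paper intends: the paper does not give a detailed argument but simply remarks that these properties are inherited from the corresponding properties of convex hulls (of cones) in the ambient space, and you have spelled out that reduction cleanly via $\conv_X(S) = X \cap \conv(S)$. The only detail the paper leaves implicit is the antiexchange step, and your convex-combination substitution is the standard verification.
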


If the variety $X$ is invariant under the action of a subgroup of $\PGL(N)$, the automorphism group of $\mathbb{P}^N$, then we obtain an additional equivariance property for the convex hull in $X$. This is the case for the Grassmannian $\Gr(k,n)$, which is a symmetric space given by a quotient of $\GL(n)$. In this case, $\GL(n)$ acts on $\mathbb{P}^{N} \cong \mathbb{P}(\bigwedge^{k}\mathbb{R}^n)$ with $N=\binom{n}{k}-1$ via the exterior product of linear maps.
\begin{proposition}
     Let $X = \Gr(k,n)$ and $S \subset X$ be a connected, semialgebraic, and very compact set. Then, the extendably convex hull is $\GL(n)$-equivariant, namely
    \[\forall \, \sigma \in \GL(n) \,:\, \conv_X(\sigma(S)) = \sigma(\conv_X(S)) \, .\]
\end{proposition}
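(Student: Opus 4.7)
The plan is to reduce the statement to an equivariance property of the ambient convex hull in $\mathbb{P}^N$, and then intersect with $X$. Note first that $\sigma \in \GL(n)$ acts on $\mathbb{P}^N \cong \mathbb{P}(\bigwedge^k \mathbb{R}^n)$ via the invertible linear map $\wedge^k \sigma$, and therefore defines a projective automorphism. The equivariance of the Pl\"ucker embedding gives $\sigma(X) = X$. Assuming the ambient identity $\conv(\sigma S) = \sigma(\conv(S))$, intersecting with $X$ yields
\begin{equation*}
\conv_X(\sigma S) = X \cap \conv(\sigma S) = \sigma(X) \cap \sigma(\conv(S)) = \sigma(X \cap \conv(S)) = \sigma(\conv_X(S)).
\end{equation*}
So the whole problem reduces to proving $\conv(\sigma S) = \sigma(\conv(S))$ in $\mathbb{P}^N$.

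For that, I would first check that $\sigma S$ still satisfies the hypotheses of Definition~\ref{def:convexhull}. It is connected because $\sigma$ is a homeomorphism of $\mathbb{P}^N$. It is very compact because if $H \subset \mathbb{P}^N$ is a hyperplane with $H \cap S = \emptyset$, then $\sigma H$ is a hyperplane with $\sigma H \cap \sigma S = \emptyset$. One can therefore compute $\conv(S)$ using the chart $\iota_H$ and $\conv(\sigma S)$ using the chart $\iota_{\sigma H}$. The induced transition map
\begin{equation*}
f := \iota_{\sigma H}^{-1} \circ \sigma \circ \iota_H \, : \, \mathbb{R}^N \longrightarrow \mathbb{R}^N
\end{equation*}
is a projective linear map that sends the hyperplane at infinity of the source chart to that of the target chart. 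A standard fact from projective geometry says that any such map is an affine bijection of $\mathbb{R}^N$. Since affine bijections commute with the Euclidean convex hull,
\begin{equation*}
\conv\bigl(\iota_{\sigma H}^{-1}(\sigma S)\bigr) = f\bigl(\conv(\iota_H^{-1}(S))\bigr),
\end{equation*}
and applying $\iota_{\sigma H}$ to both sides yields $\conv(\sigma S) = \sigma(\conv(S))$, as desired.

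The only step requiring care is the passage from a projective linear map to an affine one via a change of chart; the rest of the argument is formal, and chart independence of the convex hull (already invoked in the definition following \cite[Lemma 2.2]{Kummer_2022}) legitimises the use of two different charts. I do not anticipate any genuine obstacle beyond this.
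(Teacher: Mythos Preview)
Your proposal is correct and follows essentially the same approach as the paper: reduce to $\PGL(N)$-equivariance of the projective convex hull and then intersect with $X=\sigma(X)$. The only difference is that the paper invokes this equivariance as a known fact in one line, whereas you spell it out by passing to affine charts and observing that the induced transition map is affine; you also explicitly verify that $\sigma S$ remains connected and very compact, which the paper leaves implicit.
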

\begin{proof}
   We combine $\PGL(N)$-equivariance of the convex hull in projective space, where $N= \binom{n}{k}$, with the fact that $\Gr(k,n)$ is stable under the action of $\GL(n)$:
\begin{equation*}
\begin{aligned}
     \hspace{2cm}\sigma(\conv_X(S)) &= \sigma(\conv(S) \cap \Gr(k,n)) = \sigma(\conv_X(S)) \cap \sigma(\Gr(k,n)) \\
    & = \conv(\sigma(S)) \cap \Gr(k,n) = \conv_X(\sigma(S)) \, .\hspace{5.1cm}\qedhere
\end{aligned}
\end{equation*}
\end{proof}
%\begin{remark}
%    There are many possible choices for how to define convexity of subsets of $\Gr(k,n).$ Another natural option would be geodesic convexity, using the Fubini-Study metric inherited from the ambient projective space. Recall that a \emph{geodesic arc} is a locally distance-minimizing path between two points. A subset $S$ of a Riemannian manifold is $X$ is said to be \emph{geodesically convex} if for every two points in $S$ there exists a unique minimizing geodesic arc connecting them and lying entirely in $S.$ With this definition, it turns out that the non-negative Grassmannian itself is not geodesically convex: the geodesic arc between two points in the non-negative Grassmannian may take negative values on Pl\"ucker coordinates. \lizzie{TODO confused about this}
%\end{remark}
Given a subset $S \subset \mathbb{P}^N$, we denote the interior by $\inter(S)$, the closure by $\cl(S)$ and the boundary of $S$ by $\partial S,$ where the boundary is with respect to the Euclidean (quotient) topology on $\mathbb{P}^N$. Similarly, if $S \subset X$ for some subvariety $X \subset \mathbb{P}^N$, we denote the interior by $\inter_X(S)$, the closure by $\cl_X(S)$ and the boundary by $\partial_X S$, all relative to $X$, i.e. with respect to the subspace topology on $X$. It is a standard exercise in topology to show that $\cl_X(S) = X \cap \cl(S)$ and $\inter_X(S) = \inter((X \cap S) \cup X^c).$

\begin{definition}
    Let $S \subset X$ be a semialgebraic set in a real projective variety~$X$. The \textit{algebraic boundary} $\partial_a S$ of $S$ is the Zariski closure of $\partial_X S$ in $X$.
\end{definition}

We first want to establish when the algebraic boundary of a semialgebraic set in a real projective variety is pure of codimension one. Throughout the following, let $X \subset \mathbb{P}^N$ be an irreducible real projective variety. We start from a purely topological definition.

\begin{definition}
    A subset $S \subset X$ is called \textit{regular in $X$} if it is contained in the closure in $X$ of its relative interior in $X$. In formulas, 
    \begin{equation}
         S \subset \cl_X(\inter_X(S)) \, .
    \end{equation}
    We define the \textit{irregular points} of $S$ to be
    \begin{equation}
          \cl_X(\inter_X(S)) \setminus S \, , 
    \end{equation}
    so that $S$ is regular in $X$ if and only if the set of its irregular points is empty.
\end{definition}
For example, if $S \subset X$ is open in $X$, or if it is equal to the closure in $X$ of an open set in $X$, then $S$ is regular in $X$. 
%\begin{lemma}\label{lemma:regularity 2}
%    Let $S \subset X$. Then, $S$ is regular in $X$ if and only if one of the following equivalent conditions holds true.
%    \begin{enumerate}
%        \item For every $x \in S$ and $\epsilon >0$ there exists $y \in S$ and $\delta > 0$ such that $B_{\delta}(y) \cap X \subset B_{\epsilon}(x) \cap S$, where $B_{r}(z)$ denotes the open ball in $\mathbb{P}^N$ centered at $z \in \mathbb{P}^N$ and of radius $r>0$, where we equip $\mathbb{P}^N$ with the metric induced from the 
%        \item $\partial_X S \subset  \partial_X \, \inter_X(S)$.
%    \end{enumerate}
%\end{lemma}

\begin{remark}
    Suppose that $X \subset \mathbb{P}^N$ is non-degenerate, that is, that $X$ is not contained in any projective space of dimension smaller than $N$. Then, any subset $S \subset X$ with non-empty interior affinely spans $\mathbb{P}^N$, and in particular $\conv(S) \subset \mathbb{P}^N$ is full-dimensional.
      %Note that if a set $S \subset X$ has non-empty interior in $X$, then $S$ affinely spans $\mathbb{P}^N$. In particular, $\conv(S) \subset \mathbb{P}^N$ is full-dimensional. Indeed, if $U \subset S$ is any subset open in the Euclidean topology in $X$, then $U$ affinely spans $\mathbb{P}^N$. Indeed, suppose that there exists a hyperplane $H \subset \mathbb{P}^N$ containing $U$. Then $H \cap X$ is a nonempty closed algebraic subset of $X$ containing $U$. Since $U$ is open in $H \cap X$, $X$ must contain the Zariski closure of $H \cap X$, i.e. $X$ contains $H$. This contradicts $X$ being non-degenerate.
\end{remark}

\begin{eg}\label{example:half-sphere with handle}
In projective space, each convex semialgebraic set $S$ with non-empty interior is regular, and so is its complement \cite[Remark 2.4]{sinn2014algebraicboundariesconvexsemialgebraic}. This does not hold in general for extendably convex sets in real projective varieties. % If $S \subset \mathbb{P}^N$ is regular, then $X \cap S$ is not necessarily regular in $X$. 
As an example, we work in an affine chart of $\mathbb{P}^3$ and take $X = \mathcal{V}(x^2+y^2+z^2-1) \subset \mathbb{R}^3$ to be the sphere of unit radius, and $S$ to be the convex hull in $\mathbb{R}^3$ of the union of the closed upper hemisphere of $X$ with a great arc segment in the lower hemisphere,
    \begin{equation}
        S = \conv(X \cap \{(x,y,z) : z \geq 0\} \cup \{X \cap \mathcal{V}(y,z)\}\}) \, .
    \end{equation}
    Then, $S$ is convex, and therefore $X \cap S$ is convex in $X$. However, $X \cap S$ is not regular in $X$, as the closure of the interior does not contain the arc segment. On the other hand, since $S$ is closed in $X$, the complement $X \setminus S$ is open and hence regular in $X$.
\end{eg}

The following is an extension of \cite[Lemma 2.5]{Ranestad_2024}. The same proof can be applied to this setting since the argument is purely local. 

\begin{lemma}\label{lemma:alg boundary pure of cod 1}
    Let $\emptyset \neq S \subset X$ be a semialgebraic set, regular in $X$, and suppose that $X \setminus S$ is non-empty and regular in $X$. Then each irreducible component of the algebraic boundary of $S$ has codimension equal to one in $X$, i.e. $ \partial_a S$ is a hypersurface in $X$.
\end{lemma}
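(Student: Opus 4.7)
The plan is to argue by contradiction: assume some irreducible component $W$ of $\partial_a S$ has $\codim_X W \geq 2$, and derive a topological contradiction from the regularity hypotheses on $S$ and on $X \setminus S$. The intuition is simple: the boundary $\partial_X S$ must locally separate the full-dimensional open sets $\inter_X(S)$ and $\inter_X(X \setminus S)$, but a codimension-$\geq 2$ piece of the algebraic boundary is too small to do any separating.

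First I would select a good point $p \in W$. Let $W_1, \dots, W_\ell$ be the other irreducible components of $\partial_a S$, and write $W^\circ := W \setminus \bigcup_j W_j$, a non-empty Zariski open subset of $W$. Since $\partial_a S$ is by definition the Zariski closure of $\partial_X S$ and $W$ is irreducible, $\partial_X S \cap W^\circ$ must be Zariski dense in $W$; otherwise $\partial_X S$ would be contained in the proper Zariski-closed subset $(\bigcup_j W_j) \cup \overline{\partial_X S \cap W^\circ}^{\mathrm{Zar}}$, contradicting the definition of $\partial_a S$. Intersecting with the Zariski-open smooth loci of $X$ and of $W$, I can therefore pick $p \in \partial_X S \cap W^\circ$ that is a smooth point of both $X$ and $W$ in the real-analytic sense, so that a small Euclidean neighborhood $U$ of $p$ in $X$ is diffeomorphic to an open ball in $\mathbb{R}^{\dim X}$, $W \cap U$ is a smooth closed submanifold of $U$ of codimension $\codim_X W \geq 2$, and $U$ is disjoint from every $W_j$ with $j \neq i$.

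Now I would run the contradiction. Because a codimension-$\geq 2$ closed submanifold of a Euclidean ball does not locally disconnect it, the open set $U \setminus (W \cap U)$ is connected. By the choice of $U$, we have $\partial_a S \cap U = W \cap U$, and since $\partial_X S \subset \partial_a S$, also $\partial_X S \cap U \subset W \cap U$; combined with the disjoint decomposition $X \setminus \partial_X S = \inter_X(S) \sqcup \inter_X(X \setminus S)$, this gives
\begin{equation*}
U \setminus (W \cap U) \ \subset \ \inter_X(S) \ \sqcup \ \inter_X(X \setminus S),
\end{equation*}
exhibiting $U \setminus (W \cap U)$ as a disjoint union of two relatively open subsets. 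Regularity of $S$ and of $X \setminus S$ means $\cl_X(S) = \cl_X(\inter_X(S))$ and $\cl_X(X \setminus S) = \cl_X(\inter_X(X \setminus S))$; since $p \in \partial_X S = \cl_X(S) \cap \cl_X(X \setminus S)$, every neighborhood of $p$ meets both $\inter_X(S)$ and $\inter_X(X \setminus S)$. These open sets have full local dimension $\dim X$ at any point where they are non-empty, and hence cannot be contained in the codim-$\geq 2$ set $W \cap U$, so each of $\inter_X(S) \cap (U \setminus W)$ and $\inter_X(X \setminus S) \cap (U \setminus W)$ is non-empty. This contradicts the connectedness of $U \setminus (W \cap U)$, proving $\codim_X W = 1$.

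The main obstacle I anticipate is making the local picture around $p$ rigorous over $\mathbb{R}$: one must ensure that the chosen $p$ is a genuine smooth point of both $X(\mathbb{R})$ and $W(\mathbb{R})$ as real manifolds of the expected dimensions (so that the local topological disconnection argument applies), and not merely a smooth point of the complex variety. The cited proof of \cite[Lemma 2.5]{Ranestad_2024} handles the analogous local step for $X = \mathbb{P}^N$, and because the argument is purely local the same reduction should go through here, using that $\inter_X(S) \neq \emptyset$ (which forces $X(\mathbb{R})$ to have pure real dimension equal to $\dim X$ on a Zariski-dense subset) to guarantee the existence of such a genuine smooth real point on $W$.
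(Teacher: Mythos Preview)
The paper does not actually prove this lemma; it simply remarks that the statement is an extension of \cite[Lemma~2.5]{Ranestad_2024} and that ``the same proof can be applied to this setting since the argument is purely local.'' Your proposal supplies precisely this local argument in detail, and it is correct: the contradiction via local connectedness of the complement of a codimension-$\geq 2$ submanifold is the standard technique, and your Zariski-density argument for selecting a good point $p \in \partial_X S \cap W^\circ$ in the smooth loci is sound. In short, your approach is exactly the argument the paper is deferring to the citation, so they agree.
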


\subsection{Intersections with convex polytopes }
\label{subsec:Intersections with convex polytopes}

In this section we give a criterion for when a semialgebraic set $S$ in a real projective variety $X$ is equal to the intersection of $X$ with a convex set $P$ in the ambient projective space. Our main case of interest is when $X$ is the Grassmannian and $P$ is a projective polytope. The results presented here will be crucial in proving Theorem~\ref{thm:int_k=m=2}. We start with some elementary results from topology.

\begin{lemma}\label{lemma:U=V}
    Let $X$ be a topological space and $U,V \subset X$ open subsets such that $U$ is connected, $\emptyset \neq V \subset U$ and $\partial V \subset \partial U$. Then $V=U$.
\end{lemma}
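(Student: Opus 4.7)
The plan is to use the connectedness of $U$ by exhibiting $U$ as a disjoint union of two open sets, one of which is $V$. Since $V \subset U$ is already open in $X$ (hence open in $U$), and $V \neq \emptyset$, it suffices to show that the complement $U \setminus V$ is also open in $U$; connectedness of $U$ will then force $U \setminus V = \emptyset$, i.e.\ $V = U$.

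To see that $U \setminus V$ is open in $U$, I would take an arbitrary point $u \in U \setminus V$ and argue that $u \notin \overline{V}$. Suppose for contradiction that $u \in \overline{V}$. Since $V$ is open, $\partial V = \overline{V} \setminus V$, and because $u \notin V$ we get $u \in \partial V$. By hypothesis $\partial V \subset \partial U$, and since $U$ is open we have $\partial U = \overline{U} \setminus U$, so $u \notin U$, contradicting $u \in U \setminus V$. Therefore $u \notin \overline{V}$, so there is an open neighborhood $W \subset X$ of $u$ with $W \cap V = \emptyset$, and then $W \cap U$ is an open neighborhood of $u$ in $U$ contained in $U \setminus V$. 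This proves that $U \setminus V$ is open in $U$.

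Finally, $U = V \sqcup (U \setminus V)$ is a partition of $U$ into two sets that are open in $U$. Since $U$ is connected and $V \neq \emptyset$, we must have $U \setminus V = \emptyset$, i.e.\ $V = U$, as desired.

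There is no real obstacle here: the argument is a short point-set topology exercise, and the only subtlety is the correct use of the identities $\partial V = \overline{V} \setminus V$ and $\partial U = \overline{U} \setminus U$ that follow from $U$ and $V$ being open. I do not expect to need any additional hypothesis (e.g.\ Hausdorff) for this version of the lemma.
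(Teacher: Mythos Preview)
Your proof is correct and follows essentially the same approach as the paper: both arguments hinge on the observation that $U \cap \partial V = \emptyset$ (equivalently, that $U \setminus V = U \setminus \overline{V}$), which yields a decomposition of $U$ into two disjoint open sets $V$ and $U \setminus \overline{V}$, whence connectedness forces $V = U$. The paper states the decomposition directly, while you phrase the same step as a pointwise contradiction argument, but the content is identical.
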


\begin{proof}
    Since $U$ is open, $U \cap \partial U = \emptyset$, and therefore by assumption also $U \cap \partial V = \emptyset$. Then
    \begin{equation}
        U = V \cup (U \setminus \overline{V}) \cup (U \cap \partial V) = V \cup (U \setminus \overline{V}) \, .
    \end{equation}
    Since $U$ is connected and $V$ is nonempty, it follows that $U \setminus \overline{V}= \emptyset$. Hence $U = V$.
\end{proof}

Let us now refine this result for our setting of interest.

\begin{lemma}[Intersection with polytope]\label{lemma:inters with polytope}
    Let $S$ be a regular closed semialgebraic set in a real irreducible smooth non-degenerate projective variety $X \subset \mathbb{P}^N$. Let $P$ be a full-dimensional projective convex polytope in $\mathbb{P}^N$. Assume that the following conditions hold, where all inclusions are in $\bP^N$:
    \begin{enumerate}
        %\item $\partial_a S$ is a union of linear divisors in $X$, i.e. divisors of the form $X \cap H $ with a hyperplane $H \subset \mathbb{P}^N$.
        \item $X \cap P$ is regular in $X$,
        \item $X \cap {\rm int}(P)$ is connected,
        
        %\item ${\rm vert}(S) \subset X \cap \Ex(P)$, %$\partial_a S \subset \partial_a P$, or equivalently, ${\rm vert}(S) = X \cap \Ex(P)$.
        \item  %$\inter_X(S)$ is connected and 
        $\emptyset \neq \inter_X(S) \subset \inter( P)$,
        \item $\partial_a S \subset \partial_a P$.
        
    \end{enumerate}
    Then 
    \begin{equation}
        S = X \cap P \, , \quad \text{and} \quad \partial_X S = X \cap \partial P  \, .
    \end{equation}
   In particular, $S$ is extendably convex. 
\end{lemma}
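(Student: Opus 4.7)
My plan is to apply Lemma~\ref{lemma:U=V} with $U = X \cap \inter(P)$ and $V = \inter_X(S)$ (both open in $X$) to conclude $V = U$, and then pass to closures in $X$ to recover $S = X \cap P$. The inclusion $V \subset U$ is immediate from condition~(3), $V$ is nonempty by the same condition, and $U$ is open and connected by condition~(2). The key check will be the boundary condition $\partial_X V \subset \partial_X U$. Since $S$ is closed and regular in $X$, we have $\cl_X V = \cl_X \inter_X(S) = S$, so $\partial_X V = S \setminus \inter_X(S) = \partial_X S$. Condition~(4) will give $\partial_X S \subset \partial_a S \subset \partial_a P$, and combined with the containment $S \subset X \cap P$ (which follows from $S = \cl_X \inter_X(S)$ together with condition~(3)) we obtain $\partial_X S \subset P \cap \partial_a P = \partial P$, hence $\partial_X S \cap U = \emptyset$. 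On the other hand, regularity of $S$ gives $\partial_X S \subset S = \cl_X \inter_X(S) \subset \cl_X U$, so $\partial_X S \subset \cl_X U \setminus U = \partial_X U$, as required. Lemma~\ref{lemma:U=V} will then yield $\inter_X(S) = X \cap \inter(P)$.

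Passing to closures in $X$ would then give $S = \cl_X \inter_X(S) = \cl_X(X \cap \inter(P))$, so it remains to show $\cl_X(X \cap \inter(P)) = X \cap P$. Since $X \cap P$ is closed in $X$, one inclusion is immediate, and by regularity of $X \cap P$ (condition~(1)) for the reverse it suffices to check that $\inter_X(X \cap P) \subset \cl_X(X \cap \inter(P))$. Given $y \in \inter_X(X \cap P)$, I would choose an $X$-open neighborhood $W$ of $y$ with $W \subset P$. For each facet hyperplane $H_i$ of $P$, I would argue that $W \cap H_i$ is nowhere dense in $W$: if the defining linear form $\ell_i$ vanished on a nonempty Euclidean-open subset of $W$, then since a nonempty Euclidean open in the smooth irreducible variety $X$ is Zariski-dense, we would have $\ell_i \equiv 0$ on $X$, contradicting non-degeneracy. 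A finite union of nowhere dense sets being nowhere dense, $W \cap \inter(P) = W \setminus \bigcup_i (W \cap H_i)$ will be dense in $W$, so $y \in \cl_X(X \cap \inter(P))$. The equality $\partial_X S = X \cap \partial P$ then follows since $S = X \cap P$ is closed and $\inter_X(S) = X \cap \inter(P)$.

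The main obstacle I anticipate is the final density argument, where the interplay between the Euclidean topology on $X$, its algebraic structure, and non-degeneracy must be handled carefully to rule out a facet hyperplane locally containing an $X$-open subset of $P$. Both smoothness (needed to identify nonempty Euclidean-opens with Zariski-dense subsets) and non-degeneracy (needed to exclude $X \subset H_i$) enter essentially here, while regularity of $X \cap P$ is what promotes density on $\inter_X(X \cap P)$ to density on all of $X \cap P$.
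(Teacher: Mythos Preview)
Your proof is correct and follows essentially the same strategy as the paper's: both apply Lemma~\ref{lemma:U=V} with $V=\inter_X(S)$, verify $\partial_X V\subset\partial_X U$ via conditions (3)--(4) together with $\partial_a P\cap P=\partial P$, and then pass to closures. The only difference is cosmetic: the paper first establishes $\partial_X(X\cap P)=X\cap\partial P$ (using non-degeneracy and irreducibility) and then takes $U=\inter_X(X\cap P)$, whereas you take $U=X\cap\inter(P)$ from the outset and defer the non-degeneracy argument to the closure step $\cl_X(X\cap\inter(P))=X\cap P$; your density argument for that step is in fact spelled out a bit more carefully than the paper's.
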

\begin{proof}

We first show that $\partial_X(X \cap P) = X \cap \partial P.$ It is a simple exercise to show that $\partial_X(X \cap P) \subset X \cap \partial P,$ so it suffices to show the reverse inclusion. We argue by contradiction. Let $p$ be a point in $X \cap \partial P,$ and suppose that $p$ is not in $\partial_X(X\cap P).$ Then $p$ is in the interior of $X \cap P,$ and $X \cap \partial P$ has the subspace topology in $X \cap P,$ so there is an open neighborhood of $p$ in $X$ contained in $X \cap \partial P$. Since $X$ is irreducible, it is equal to the Zariski closure of any of its open subsets. Thus   $X$ must be contained in $\partial_a P$. Since $P$ is a polytope, $\partial_a P$ is union of hyperplanes in $\mathbb{P}^N$. But $X$ is non-degenerate, so it cannot be contained in a hyperplane.

We now prove the inclusion $\partial_X S \subset \partial P$. This follows from $\partial_a P \cap P = \partial P$, as $P$ is a projective convex polytope. In fact, by assumption~3 and the fact that $S$ is regular we have that $\partial_X S = \partial_X \inter_X (S) \subset P$. On the other hand, by assumption~4 we have that $\partial_X S \subset \partial_a S \subset \partial_a P$, and intersecting with $P$ we obtain that $\partial_X S \subset \partial_a P \cap P = \partial P$. 

We now prove the result. Let $V = \inter_X(S)$ and $U=\inter(X \cap P)$, which are open subsets of~$X$. Since $S$ is regular in $X$, $\partial_X S = \partial_X V$. Similarly, by assumption~1 it follows that $\partial_X(X \cap P) = \partial_X U$. Then, by assumption~3 we have that $\partial V \subset X \cap \partial P = \partial_X U \subset \partial U$. Therefore, we can apply Lemma~\ref{lemma:U=V} to deduce that $U=V$. In particular, $\partial U = \partial V $. Moreover, by the fact that $S$ is regular, together with assumption~1, we deduce that $S = \cl_X(V) = \cl_X(U) = X \cap P $.
%$\textit{(c)}$ %By assumption 2, $\partial S \subset \partial_a S \subset \partial_a P$, and by assumption 1 it follows that $\partial S \subset \partial_a P \cap P = \partial P$, where the last equality holds since $P$ is a convex polytope. Again from assumption 1, $\partial P$ does not intersect the relative interior of $S$, so that $X \cap \partial P \subset $ 
%This follows from
%\begin{equation}
%    X \cap \partial P = \partial_X (X \cap P) = \partial_X S \, ,
%\end{equation}
%where the first equality follows from assumption 2, and the second one is $\textit{(a)}$.
\end{proof}
We now characterize a subset of the potentially irregular points in the intersection of a projective variety with a polytope. This will be useful later when checking the assumptions of Lemma~\ref{lemma:inters with polytope}.

%\begin{proposition}
%Let $X \subset \mathbb{P}^N$ be a real projective variety and let $P \subset \mathbb{P}^N$ be a full-dimensional projective convex polytope in $\mathbb{P}^N$ such that $X \cap \inter(P) \neq \emptyset$. Assume that $X \cap \partial P$ is connected, then $X \cap P$ is regular.
%\end{proposition}

%\begin{proof}
%Let $E :=  (X \cap \partial P) \setminus \partial_X(X \cap \inter(P))$. Note that if $E = \emptyset$, then $X \cap P$ is regular, hence assume that $E \neq \emptyset$. Let $x \in E$. Since $X \cap \partial P$ is connected, $\cl_X(E) \cap \partial_X(X \cap \inter(P)) \neq \emptyset$, so let $y$ be an element in this set.
%Again, since $X \cap \partial P$ is connected, there exists a face $F \subset X \cap \partial P$ of $P$ containing both $x$ and $y$. Let $\overline{F}$ denote the real points of the Zarisky closure of $F$. Since $X$ and $P$ are real and $\overline{F}$ is linear, $\overline{F} \subset X$. We claim that 
%\end{proof}

\begin{lemma}\label{lemma:reg_X_P}
Let $X \subset \mathbb{P}^N$ be a real projective variety of codimension $r$ and let $P \subset \mathbb{P}^N$ be a full-dimensional projective convex polytope in $\mathbb{P}^N$ such that $X \cap \inter(P) \neq \emptyset$. Let $F \subset \partial P$ be a face of $P$ of dimension greater or equal than~$r$. Then, the irregular points of $X \cap P$ in the relative interior of $F$ in $X$ are contained in the singular locus of $X \cap F$, i.e.
\begin{equation}
    (\cl_X( \inter_X(X \cap P)) \setminus (X \cap P)) \cap {\rm relint}_X(F) \subset {\rm Sing}(X \cap \overline{F}) \cap {\rm relint}_X(F) \, ,
\end{equation}
where $\overline{F}$ denotes the Zariski closure of $F$ in $\mathbb{P}^N$ and ${\rm relint}_X(F) := {\rm int}_{\overline{F}}(X \cap F)$.
\end{lemma}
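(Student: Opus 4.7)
The plan is to argue by contrapositive. Let $p \in {\rm relint}_X(F)$ be a smooth point of $X \cap \overline{F}$; I will show that $p \in \cl_X(\inter_X(X \cap P))$, so that $p$ is not an irregular point of $X \cap P$. Since $\inter(P)$ is open in $\mathbb{P}^N$, the set $X \cap \inter(P)$ is open in $X$ and contained in $X \cap P$, giving $X \cap \inter(P) \subset \inter_X(X \cap P)$; it therefore suffices to produce a sequence in $X \cap \inter(P)$ converging to $p$ in $X$.

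Next I would read off the local convex structure of the polytope at $p$. Because $p$ lies in the relative interior of the face $F$, the tangent cone of $P$ at $p$ decomposes as a Minkowski sum $T_p\overline{F} + C$, where $C$ is the normal cone of $F$ in $P$ at $p$: a convex cone of full dimension in a complement of $T_p\overline{F}$ inside $T_p\mathbb{P}^N$, with nonempty interior. A tangent direction $v \in T_p\mathbb{P}^N$ points strictly into $\inter(P)$ precisely when its image modulo $T_p\overline{F}$ lies in $\inter(C)$.

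On the variety side, the hypotheses $\dim F \geq r$ and $X \cap \inter(P) \neq \emptyset$, together with the smoothness of $X \cap \overline{F}$ at $p$, force $X \cap \overline{F}$ to have the expected dimension $\dim F - r$, so the intersection is proper at $p$. Passing to an irreducible local analytic branch $X_0$ of $X$ through $p$ whose intersection with $\overline{F}$ realises the smooth germ of $X \cap \overline{F}$, one obtains $T_p X_0 + T_p \overline{F} = T_p \mathbb{P}^N$ and hence a surjection $T_p X_0 \to T_p\mathbb{P}^N / T_p\overline{F}$. Lifting any vector in $\inter(C)$ to a tangent vector $v \in T_p X_0$ and choosing a smooth analytic arc $\gamma \colon [0,\varepsilon) \to X_0$ with $\gamma(0) = p$ and $\gamma'(0) = v$, one sees that for all sufficiently small $t > 0$ the point $\gamma(t)$ lies in $X \cap \inter(P)$, which yields the desired approximating sequence.

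The main obstacle I expect is the passage from the smoothness of $X \cap \overline{F}$ at $p$ to a workable smooth local model for $X$ itself, since $X$ may be singular at $p$ even when $X \cap \overline{F}$ is not. The fix is to work with a single irreducible analytic branch of $X$ accounting for the smooth germ of $X \cap \overline{F}$; once this is done, the transversality of $X_0$ and $\overline{F}$ combines with the polytope's local combinatorics to place $p$ in $\cl_X(X \cap \inter(P)) \subset \cl_X(\inter_X(X \cap P))$, as required.
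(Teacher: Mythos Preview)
Your approach is essentially the same as the paper's: both argue by contrapositive that smoothness of $X\cap\overline{F}$ at $p$ implies transversality of $X$ and $\overline{F}$ at $p$, and that transversality forces $X$ to locally enter $\inter(P)$, placing $p$ in $\cl_X(\inter_X(X\cap P))$. The paper's proof is terser---it states each step as an equivalence and leaves the tangent-cone structure of $P$ at $p$ and the passage from transversality to ``small balls in $X$ meet $\inter(P)$'' implicit---whereas you spell out the Minkowski decomposition $T_p\overline{F}+C$ and the analytic-arc construction explicitly.

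One remark: your claim that smoothness of $X\cap\overline{F}$ at $p$ together with $\dim F\geq r$ and $X\cap\inter(P)\neq\emptyset$ forces the expected dimension $\dim F - r$ is not justified as stated; smoothness alone does not rule out an improper intersection. The paper's proof has the same implicit gap (it uses $\dim T_pX = N-r$, so tacitly assumes $X$ is smooth at $p$). Your analytic-branch workaround for singular $X$ is a reasonable idea but also not airtight, since an irreducible local analytic branch need not itself be smooth. In the paper's application $X=\Gr(2,4)$ is smooth, so neither issue arises.
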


\begin{proof}
    Let $p \in X \cap {\rm relint}_X(F) $. Note that $p$ is a regular point in $X \cap F$ if and only if $X$ and $F$ intersect transversally at $p$. By the assumption on the dimensions, this is equivalent to $\dim(T_pF) + \dim(T_pX) = N$. In turn, since $p \in {\rm relint}_X(F)$ and by dimension reasons, this is equivalent to any small open ball in $X$ around $p$ intersecting $\inter(P)$, that is, to $p$ being regular.
\end{proof}

This criterion is particularly useful if $X$ is a hypersurface, and if one understand the singular locus of the intersection of $X$ with each face of $P$. This will be the case for us in Section~\ref{sec:Convexity of Amplituhedra}.

\section{Exterior cyclic polytopes} \label{sec:Exterior Cyclic Polytopes}
In this section we define and study the exterior cyclic polytope associated to a positive matrix. These polytopes are of independent interest for combinatorics, beyond applications to amplituhedra. We study the facet structure, $f$-vector, and dependence on the input matrix.

\begin{definition}
    Let $P \subset \bP^{r}$ be a projective polytope. We define the $k$-th exterior polytope  $\bigwedge^kP \subset \bP(\bigwedge^k \bR^{r+1})$ of $P$ to be the convex hull of all points $v_1 \wedge \dots \wedge v_k$ such that all $v_i$ are vertices of $P$.
\end{definition}

Here the convex hull is taken in projective space, as in Definition \ref{def:convexhull}. Since the set of vertices of $P$ is finite, it is very compact, and so the convex hull is independent of choice of affine chart.

\begin{definition}
    Fix $Z \in \text{Mat}_{> 0}(k+m,n)$ with $n \geq k+m$. Let $C_{k+m,n}(Z)$ be the cyclic polytope given by the convex hull of the columns of $Z$. We define the \textit{$k$-th exterior cyclic polytope $C_{k,m,n}(Z)$ of type} $(m,n)$ to be $ \bigwedge^k C_{k+m,n}(Z)$.
\end{definition}

We use the indexing $k+m$ because the exterior polytope is empty for $m<0$, and also to match the amplituhedron convention. We note the following.
\begin{lemma}
    The polytope $ C_{k,m,n}(Z)$ is the image in $\mathbb{P}(\bigwedge^k \mathbb{R}^{k+m})$ of the standard simplex in $\bP(\bigwedge^k\bR^n)$ under the linear map $\bigwedge^k Z$. 
\end{lemma}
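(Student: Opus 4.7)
The plan is to identify both sides with the projectivization $\pi(Q)$ of the affine polytope $Q := \conv(\{Z_I\}_{I \in \binom{[n]}{k}}) \subset \bigwedge^k \bR^{k+m}$, where $Z_I := Z_{i_1} \wedge \cdots \wedge Z_{i_k}$ for $I = \{i_1 < \cdots < i_k\}$ and $\pi : \bigwedge^k \bR^{k+m} \setminus \{0\} \to \bP(\bigwedge^k \bR^{k+m})$ denotes the usual projection.

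First I would unpack the left-hand side. Since $Z$ has positive maximal minors, its columns $Z_1, \ldots, Z_n$ are in sufficiently general position that each is a vertex of $C_{k+m,n}(Z) \subset \bP^{k+m-1}$. By the definition of the $k$-th exterior polytope, $C_{k,m,n}(Z)$ is the convex hull in $\bP(\bigwedge^k \bR^{k+m})$ of the wedges $[v_1 \wedge \cdots \wedge v_k]$ of vertices of $C_{k+m,n}(Z)$. Repeated indices give zero wedges and antisymmetric permutations give signs that disappear in projective space, so this generating set reduces to $\{[Z_I]\}_{I \in \binom{[n]}{k}}$ and $C_{k,m,n}(Z) = \conv(\{[Z_I]\})$ in the sense of Definition~\ref{def:convexhull}. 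For the right-hand side, the linear map $\bigwedge^k Z$ sends the standard basis vector $e_I \in \bigwedge^k \bR^n$ to $Z_I$, so the affine standard simplex $\Delta = \conv(\{e_I\}) \subset \bigwedge^k \bR^n$ maps onto $Q$, and the image of the standard simplex in $\bP(\bigwedge^k \bR^{k+m})$ is exactly $\pi(Q)$.

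It then remains to verify $\pi(Q) = \conv(\{[Z_I]\})$ in $\bP(\bigwedge^k \bR^{k+m})$. This is the main obstacle, since the projective convex hull requires a choice of affine chart. The plan is to exhibit a linear functional $\ell$ on $\bigwedge^k \bR^{k+m}$ with $\ell(Z_I) > 0$ for every $I$, constructed as follows: extend $Z$ to a positive matrix $Z' \in \mathrm{Mat}_{>0}(k+m, n+m)$ by appending $m$ columns from a moment curve (this preserves positivity of all maximal minors), and set $\ell(\alpha) := \alpha \wedge Z'_{n+1} \wedge \cdots \wedge Z'_{n+m}$ under the identification $\bigwedge^{k+m} \bR^{k+m} \cong \bR$. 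Then $\ell(Z_I) = \langle i_1 \cdots i_k \, (n{+}1) \cdots (n{+}m)\rangle_{Z'}$ is a maximal minor of $Z'$ taken on an already increasing column index set, hence positive. With such $\ell$ in hand, $Q$ lies in the open half-space $\{\ell > 0\}$, so $\pi$ is injective on $Q$; a direct rescaling of the non-negative convex-combination coefficients then shows that in the affine chart $\{\ell = 1\}$ both $\pi(Q)$ and $\conv(\{[Z_I]\})$ are realized as the affine convex hull of the points $\{Z_I/\ell(Z_I)\}$.
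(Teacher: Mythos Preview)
Your proof is correct. The paper states this lemma without proof, treating it as a routine unpacking of the definitions (indeed, in the Introduction the exterior cyclic polytope is \emph{defined} as this image, so the lemma is really reconciling two presentations). Your argument is more careful than the paper's silence: you explicitly verify that the finite set $\{[Z_I]\}$ is very compact by producing a positive linear functional $\ell$, so that the projective convex hull of Definition~\ref{def:convexhull} is well-defined and agrees with $\pi(Q)$ in the chart $\{\ell = 1\}$.

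One small imprecision: the phrase ``appending $m$ columns from a moment curve'' is misleading, since an arbitrary $Z \in \mathrm{Mat}_{>0}(k+m,n)$ need not lie on any moment curve. What you actually need is the fact that any matrix with positive maximal minors can be extended on the right to a larger such matrix. This is true and standard (it follows, for instance, from Postnikov's parametrization of $\Gr_{>0}$, or from an inductive argument on the open half-space conditions the new column must satisfy), but it deserves a sentence of justification or a reference rather than an appeal to the moment curve.
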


In particular, $C_{k,m,n}(Z)$ is a projective polytope. It is the convex hull of 
\begin{equation}\label{vertices}
    (i_1 \dots i_k) := Z_{i_1} \wedge \dots \wedge Z_{i_k} \, , \qquad 1 \leq i_1 < \dots < i_k \leq n \, .
\end{equation}
Each such vertex is the projection of $e_I := e_{i_1} \wedge \ldots \wedge e_{i_k}.$ The facets of the exterior cyclic polytope are certain hyperplanes in $\bigwedge^k \bR^{k+m}$ spanned by a subset of the vertices. They are certain flats of rank $\binom{k+m}{2}-1$ in the linear matroid of the matrix $\wedge^k Z.$ These sub-maximal flats are called \emph{hyperplanes} of the matroid. It will often be convenient to study this matroid.

\begin{definition}\label{def:wedge_matroid}
    The \emph{wedge power matroid} $W_{k,m,n}$ is the linear matroid of $\wedge^k Z$ for $Z$ a generic $(k+m) \times n$ matrix.
\end{definition}

\begin{eg}
    The columns of $\wedge^kZ$ will have dependencies. For example, let $Z_1, \, \ldots, \, Z_6$ be the columns of a $4 \times 6$ matrix. Then we may write $Z_1= aZ_2+ bZ_3+cZ_4+ dZ_5$ for some scalars $a,b,c,d.$
    Taking wedge product with $Z_1$ on both sides produces a linear relation between the columns ${Z_1 \wedge Z_2}, \, Z_1 \wedge Z_3, \, Z_1 \wedge Z_4,$ and $Z_1 \wedge Z_5$ of $\wedge^2Z.$
\end{eg}

We note that starting from $m=2,$ the linear matroid of $\wedge^kZ$ depends not only on the matroid of $Z$ but also on the matrix $Z$ itself, as we will see in Subsection \ref{subsec:The case $m=k=2$ and $n=6$}. However, the matroid of $\wedge^kZ$ is still constant for a generic choice of $Z$; indeed, the vanishing of minors of $\wedge^kZ$ defines a Zariski closed subset of the set of $(k+m) \times n$ matrices. Thus Definition \ref{def:wedge_matroid} makes sense. 

We will see in Subsection \ref{subsec:The case $m=k=2$ and $n=6$} that the combinatorial type of $C_{k,m,n}(Z)$ varies as $Z$ varies over positive matrices. However, we may still prove some independence of $Z \in \text{Mat}_{> 0}(k+m,n)$.

\begin{lemma}\label{lem:slinvariant}
The oriented matroid of $\wedge^kZ$ , and thus the combinatorial type of the exterior cyclic polytope, depends only on the image of $Z$ in $\Gr_{> 0}(k+m,n).$
\end{lemma}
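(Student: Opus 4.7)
My plan is to translate the statement into a claim about the chirotope of $\wedge^k Z$ and its invariance under the $\GL(k+m)$-ambiguity in representing a point of the positive Grassmannian. First, two positive matrices $Z, Z' \in \text{Mat}_{>0}(k+m,n)$ represent the same point of $\Gr_{>0}(k+m,n)$ if and only if $Z' = gZ$ for some $g \in \GL(k+m,\bR)$. Since each maximal minor of $gZ$ equals $\det(g)$ times the corresponding minor of $Z$, and since both $Z$ and $Z'$ have strictly positive maximal minors, this forces $\det(g) > 0$.

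Next, by functoriality of the exterior power, $\wedge^k Z' = (\wedge^k g) \circ (\wedge^k Z)$, where $\wedge^k g \in \GL(\wedge^k \bR^{k+m})$. A short eigenvalue computation---each of the $k+m$ eigenvalues of $g$ appears in $\binom{k+m-1}{k-1}$ of the eigenvalues of $\wedge^k g$---gives
\begin{equation*}
\det(\wedge^k g) \, = \, \det(g)^{\binom{k+m-1}{k-1}} \, > \, 0.
\end{equation*}
Writing $D = \binom{k+m}{k}$, each $D \times D$ minor of $\wedge^k Z'$ therefore equals $\det(\wedge^k g)$ times the corresponding $D \times D$ minor of $\wedge^k Z$. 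Hence the two chirotopes agree on the nose, and in particular the underlying oriented matroids coincide.

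To transfer this invariance to the combinatorial type of the polytope, I would invoke the standard fact from oriented matroid theory that the face lattice of the convex hull of a vector configuration is determined by its oriented matroid, provided the configuration is acyclic (see e.g.\ Bj\"orner--Las~Vergnas--Sturmfels--White--Ziegler). The columns of $\wedge^k Z$ are acyclic, since $C_{k,m,n}(Z)$ is very compact and so its vertices admit affine lifts lying in a common open half-space of $\wedge^k \bR^{k+m}$. Applying the linear automorphism $\wedge^k g$ sends this acyclic lift to a corresponding acyclic lift for $\wedge^k Z'$ with the same chirotope, and the conclusion follows.

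The main potential obstacle is the projective-versus-affine distinction: convex hulls in projective space depend on a choice of chart at infinity, and the projective map induced by $\wedge^k g$ need not preserve such a chart. The remedy is to work entirely with affine lifts, where $\wedge^k g$ acts as a genuine orientation-preserving linear isomorphism; invariance of the chirotope then upgrades directly to a combinatorial isomorphism of the face lattices of $C_{k,m,n}(Z)$ and $C_{k,m,n}(Z')$.
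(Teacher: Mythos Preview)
Your proof is correct and follows essentially the same approach as the paper: both reduce to showing that left multiplication by $\wedge^k g$ scales every maximal minor of $\wedge^k Z$ by the same positive factor $\det(\wedge^k g)=\det(g)^{\binom{k+m-1}{k-1}}$, hence preserves the chirotope. Your treatment is in fact slightly more careful than the paper's, which argues only for $\SL(k+m)$-invariance and leaves the positive-determinant observation implicit; you also spell out the passage from oriented matroid to face lattice via acyclicity, which the paper handles in a separate remark citing the oriented matroid literature.
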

\begin{proof}
    The oriented matroid of $\wedge^kZ$ is given by the sign of the maximal minors of $\wedge^kZ.$ Each such minor is a polynomial in the matrix entries of $Z.$ We claim each such polynomial is invariant under the ${\rm SL}(k+m)$ action on $Z$. Indeed, any matrix $A \in {\rm SL}(k+m)$ induces an endomorphism $\wedge^kA$ of $\bigwedge^k\bR^{k+m}$ by acting on each tensor factor. The determinant of $\wedge^kA$ is a power of $\det A,$ so for any $\binom{k+m}{k} \times \binom{k+m}{k}$ submatrix $W,$ we have $\det (\wedge^kA \cdot W) = \det(W).$ 
\end{proof}

The relationship between the oriented matroid of a set of points and the convex hull of said points is described in \cite[Remark 3.7.3]{berndmatroids}. Oriented matroids have many cryptomorphic axiomatizations, including one in terms of \emph{covectors}. The covectors of the oriented matroid coming from a point configuration are signed sets $Y = (Y^+, Y^-)$ such that there exists a hyperplane $H$ with  $Y^+ = Y \cap H^+$ and $Y^- = Y \cap H^-.$ Thus a facet of the convex hull of these points is exactly the span of a set of points $Y$ such that the signed set $(Y , \varnothing)$ is a covector. This shows that the oriented matroid of $\wedge^kZ$ determines the combinatorial type of the exterior cyclic polytope $C_{k,m,n}(Z)$.

\subsection{The case $m=0$} 
\label{subsec:The case $m=0$}

Here $\wedge^kZ$ is a $1 \times \binom{n}{k}$ matrix whose entries are the maximal minors of $Z.$ For a general matrix, these minors are nonzero; thus the matroid $W_{k,0,n}(Z)$ is equal to the uniform matroid $U(1, \binom{n}{k})$. The exterior cyclic polytope in this case is equal to the single point $\mathbb{P}^{0}$.

\subsection{The case $m=1$}
\label{subsec:The case $m=1$}

The case $m=1$ is already interesting. In this case, the exterior cyclic polytope lives in $\bP(\bigwedge^{k}\bR^{k+1}) = (\bP^k)^\vee.$ The columns of $Z$ can be viewed as a configuration of points in $\bP^k$ or in $\bR^{k+1}.$
\begin{definition}
    Let $M$ be a point configuration in $\bR^k.$ The \emph{discriminantal arrangement} of $M$ is the hyperplane arrangement consisting of all hyperplanes spanned by points in $M.$ 
\end{definition}
\begin{proposition}
   The matroid $W_{k,1,n}$ is the linear matroid of the discriminantal arrangement of $n$ general points in $\bR^{k+1}.$ 
\end{proposition}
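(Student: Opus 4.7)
The plan is to identify the two matroids explicitly via the Hodge star isomorphism $\star\colon \bigwedge^k \bR^{k+1}\xrightarrow{\sim}\bR^{k+1}$, which exists because both spaces have dimension $k+1$. Under this isomorphism, the columns of $\wedge^k Z$ correspond to the normal vectors of the hyperplanes cut out by $k$-subsets of the columns of $Z$, which is exactly the discriminantal arrangement.

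First I would recall (or verify) the defining property: for vectors $v_1,\ldots,v_k\in\bR^{k+1}$, the image $\star(v_1\wedge\cdots\wedge v_k)$ is characterized by
\[
\langle \star(v_1\wedge\cdots\wedge v_k),\, w\rangle \;=\; \det[v_1\mid\cdots\mid v_k\mid w] \qquad \text{for all } w\in\bR^{k+1}.
\]
Consequently $\star(v_1\wedge\cdots\wedge v_k)$ is a nonzero normal vector to $\mathrm{span}(v_1,\ldots,v_k)$ when the $v_i$ are linearly independent, and is zero otherwise.

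Next I would apply this to the columns of $\wedge^k Z$. Letting $Z_1,\ldots,Z_n\in\bR^{k+1}$ denote the columns of $Z$, the column of $\wedge^k Z$ indexed by $I=\{i_1<\cdots<i_k\}$ is $Z_{i_1}\wedge\cdots\wedge Z_{i_k}$, and $\star$ sends this to the normal vector $n_I$ of the hyperplane $H_I:=\mathrm{span}(Z_{i_1},\ldots,Z_{i_k})$. Since $Z$ is generic, its columns are $n$ general points of $\bR^{k+1}$, and the collection $\{H_I : I\in\binom{[n]}{k}\}$ is precisely the discriminantal arrangement of this configuration. Because $\star$ is a linear isomorphism of ambient vector spaces, it sends the linear matroid on $\{Z_{i_1}\wedge\cdots\wedge Z_{i_k}\}$ bijectively onto the linear matroid on $\{n_I\}$, which is by definition the linear matroid of the discriminantal arrangement. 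This gives $W_{k,1,n}$ equals the discriminantal matroid.

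There is no real obstacle beyond bookkeeping. The only points requiring mild care are: (i) checking that the Hodge star, although dependent on a choice of inner product and orientation, induces an isomorphism of matroids regardless of these choices (any linear isomorphism between ambient vector spaces preserves linear matroids on corresponding vectors); and (ii) matching the two notions of genericity, which is immediate since the columns of a generic $(k+1)\times n$ matrix form a generic point configuration in $\bR^{k+1}$ and vice versa.
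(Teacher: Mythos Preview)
Your proposal is correct and follows essentially the same approach as the paper. The paper's proof is a one-sentence version of your argument: it asserts directly that each column $Z_{i_1}\wedge\cdots\wedge Z_{i_k}$ of $\wedge^k Z$ \emph{is} the normal vector to the hyperplane through those $k$ points, implicitly using the identification $\bigwedge^k\bR^{k+1}\cong\bR^{k+1}$ that you make explicit via the Hodge star, and then invokes the definition of the matroid of a hyperplane arrangement.
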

\begin{proof}
    Each column $Z_I = Z_{i_1} \wedge \ldots \wedge Z_{i_k}$ in the $(k+1) \times n$ matrix $\wedge^kZ$ is the normal vector to a hyperplane in $\bR^{k+1}$ passing through $k$ of the $n$ points. The matroid of a hyperplane arrangement is exactly defined as the linear matroid of the normal vectors.
\end{proof}
\begin{figure}[!h]
\begin{center}
    \scalebox{0.6}{\begin{tikzpicture}[scale=3]
    % Define the 5 points and place labels outward
    \foreach \i in {1,...,5} {
        \pgfmathsetmacro\angle{72*(\i-1)}
        \pgfmathsetmacro\x{cos(\angle)}
        \pgfmathsetmacro\y{sin(\angle)}
        \coordinate (P\i) at (\x,\y);
        \fill (P\i) circle (0.02);
        % Label pushed outward along radial direction
        \node[font=\small] at ({1.15*cos(\angle)}, {1.15*sin(\angle)}) {\i};
    }

    % Draw extended lines for all 10 pairs
    \foreach \i/\j in {1/2, 1/3, 1/4, 1/5, 2/3, 2/4, 2/5, 3/4, 3/5, 4/5} {
        \path let \p1 = (P\i), \p2 = (P\j) in
            coordinate (A) at ($(\p1) - 0.2*(\p2) + 0.2*(\p1)$)
            coordinate (B) at ($(\p2) - 0.2*(\p1) + 0.2*(\p2)$);
        \draw[blue, thin] (A) -- (B);
    }

\end{tikzpicture}}
\end{center}
\caption{The discriminantal arrangement of five general points in $\bP^2$}\label{fig:disc5}
\end{figure}
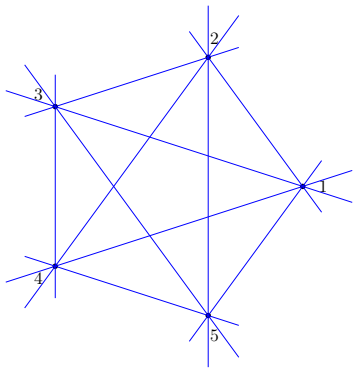
For example, the matroid $W_{k,1,k+2}$ is the graphic matroid of the complete graph $K_{k+2}.$ Equivalently, it is the discriminantal arrangement of $k+2$ general points in $\bR^{k+1}.$ This hyperplane arrangement is combinatorially equivalent to the braid arrangement. % Indeed, if one chooses the $k+2$ points to be $e_1, \ldots, e_{k+1}, e_1 + \ldots + e_{k+1},$ one obtains the projection of the braid arrangement in $\bR^{k+2}$ to $\bR^{k+2}/\bR\gen{1, \, \ldots, \, 1}.$

For $m=1$, the exterior cyclic polytope $C_{k,1,n} \subset (\mathbb{P}^{k})^\vee$ is a $k$-simplex. The amplituhedron, on the other hand, is the union of bounded regions of the hyperplane arrangement whose normals are the columns of $\wedge^kZ$ \cite{Karp_2017}. For $k=3$ and $n=5$ the exterior cyclic polytope and the amplituhedron live in $(\bP^2)^\vee$ and are depicted in Figure~\ref{fig:m=1amp}. Notice that projective duality exchanges the five points in Figure \ref{fig:disc5} with the five lines in Figure~\ref{fig:m=1amp}.
\begin{figure}[!h]
\begin{center}
    \scalebox{0.6}{\tikzset{every picture/.style={line width=0.75pt}} %set default line width to 0.75pt        

\begin{tikzpicture}[x=0.75pt,y=0.75pt,yscale=-1,xscale=1]
%uncomment if require: \path (0,300); %set diagram left start at 0, and has height of 300

%Shape: Right Triangle [id:dp1927140282881451] 
\draw  [fill={rgb, 255:red, 184; green, 233; blue, 134 }  ,fill opacity=1 ] (140,149.5) -- (239.75,200) -- (140,200) -- cycle ;
%Shape: Right Triangle [id:dp285589168793292] 
\draw  [fill={rgb, 255:red, 184; green, 233; blue, 134 }  ,fill opacity=1 ] (140,120) -- (220,200) -- (140,200) -- cycle ;
%Shape: Right Triangle [id:dp8643070912773717] 
\draw  [fill={rgb, 255:red, 184; green, 233; blue, 134 }  ,fill opacity=1 ] (140,80) -- (200,200) -- (140,200) -- cycle ;
%Straight Lines [id:da7970580763235176] 
\draw    (140,40) -- (140,240) ;
%Straight Lines [id:da674532767475405] 
\draw    (80,200) -- (320,200) ;
%Straight Lines [id:da5408061349136138] 
\draw    (120,40) -- (220,240) ;
%Straight Lines [id:da9705286776727455] 
\draw    (100,80) -- (260,240) ;
%Straight Lines [id:da4861392665580626] 
\draw    (80,120) -- (280,220) ;
%Shape: Right Triangle [id:dp3219181418147359] 
\draw  [fill={rgb, 255:red, 74; green, 144; blue, 226 }  ,fill opacity=0.3 ] (440,80) -- (540,200) -- (440,200) -- cycle ;
%Straight Lines [id:da5709279430079148] 
\draw    (440,40) -- (440,240) ;
%Straight Lines [id:da9747828774688085] 
\draw    (380,200) -- (620,200) ;
%Straight Lines [id:da7284313277119369] 
\draw    (420,40) -- (520,240) ;
%Straight Lines [id:da30395124701786] 
\draw    (400,80) -- (560,240) ;
%Straight Lines [id:da20732610970450216] 
\draw    (380,120) -- (580,220) ;

% Text Node
\draw (141,22) node [anchor=north west][inner sep=0.75pt]   [align=left] {$\displaystyle 1$};
% Text Node
\draw (101,22) node [anchor=north west][inner sep=0.75pt]   [align=left] {$\displaystyle 2$};
% Text Node
\draw (81,60) node [anchor=north west][inner sep=0.75pt]   [align=left] {$\displaystyle 3$};
% Text Node
\draw (68,120) node [anchor=north west][inner sep=0.75pt]   [align=left] {$\displaystyle 4$};
% Text Node
\draw (61,200) node [anchor=north west][inner sep=0.75pt]   [align=left] {$\displaystyle 5$};
% Text Node
\draw (448,22) node [anchor=north west][inner sep=0.75pt]   [align=left] {$\displaystyle 1$};
% Text Node
\draw (408,22) node [anchor=north west][inner sep=0.75pt]   [align=left] {$\displaystyle 2$};
% Text Node
\draw (381,62) node [anchor=north west][inner sep=0.75pt]   [align=left] {$\displaystyle 3$};
% Text Node
\draw (361,122) node [anchor=north west][inner sep=0.75pt]   [align=left] {$\displaystyle 4$};
% Text Node
\draw (368,202) node [anchor=north west][inner sep=0.75pt]   [align=left] {$\displaystyle 5$};

\end{tikzpicture}}
\end{center}
    \caption{Amplituhedron (left) and exterior cyclic polytope (right) in $(\bP^2)^\vee$ for ${k=3,m=1,n=5 }$.}\label{fig:m=1amp}
\end{figure}
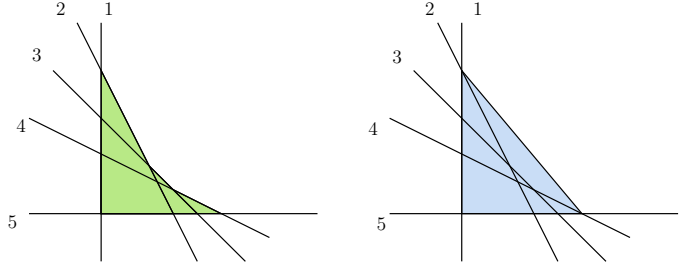

\subsection{The case $m=k=2$ and $n=6$}
\label{subsec:The case $m=k=2$ and $n=6$}
We present an extended example. Fix real numbers $a<b<c<d<e<f$. This gives a positive~matrix
\begin{equation}
\label{eq:Zmatrix} Z \quad = \quad \begin{pmatrix}
1 & 1 & 1 & 1 & 1 & 1  \\
a & b & c & d & e & f  \\
a^2 & b^2 & c^2 & d^2 & e^2 & f^2  \\
a^3 & b^3 & c^3 & d^3 & e^3 & f^3  
\end{pmatrix} \, .
\end{equation}
The convex hull of the columns of $Z$ is the cyclic polytope $C_{4,6}$,
which is the convex hull of the union of two triangles that meet in a unique point in the interiors. Its $f$-vector is $(6,15,18,9)$. Then $\wedge^2Z$ is given by the $6 \times 15$ matrix whose entries are the $2 \times 2 $ minors:
$$ \wedge^2 Z \,\, = \,\,
\begin{pmatrix}
a-b &  a-c & a-d & a-e & \cdots &   d-f & e-f \\
a^2-b^2 & a^2-c^2 & a^2-d^2 & a^2-e^2 & \cdots &     d^2-f^2 &  e^2-f^2 \\
a^3-b^3 & a^3-c^3 & a^3-d^3 & a^3-e^3 & \cdots &   d^3-f^3 &  e^3-f^3 \\
a^2 b-a b^2 &  a^2c-ac^2 & a^2 d-a d^2 & a^2e-ae^2 & \cdots &  d^2 f-d f^2 & e^2f-ef^2 \\
a^3 b-a b^3 &  a^3c-ac^3 & a^3 d-a d^3 & a^3e-ae^3 & \cdots & d^3 f-df^3 & e^3 f-e f^3 \\
a^3 b^2-a^2 b^3 & a^3c^2-a^2c^3 & a^3d^2-a^2d^3 & a^3e^2-a^2e^3 & \cdots & \! d^3 f^2-d^2 f^3 & e^3f^2-e^2f^3 \\
\end{pmatrix} \,. 
$$
The (cone over the) exterior cyclic polytope is the convex hull of the columns of $\wedge^2 Z$ in $\bigwedge^2 \bR^4.$ We identify $\wedge^2 Z$ with the matrix that is obtained by dividing the columns by their common factors, $a-b,a-c,a-d, a-e,\ldots, d-f,e-f $. The resulting matrix has $\binom{16}{5} = 5005$ maximal minors, each of which is a homogeneous polynomial of degree $12$. Of these minors, $1660$ are zero and the other $3345$ are nonzero. These bases come in $12$ symmetry classes. We identify the ground set $\binom{[6]}{2}$ with edges of a complete graph on $6$ vertices, and depict these bases in Figure \ref{fig:c246bases}. 
\begin{figure}[!h]
\begin{center}
    \includegraphics[scale=0.2]{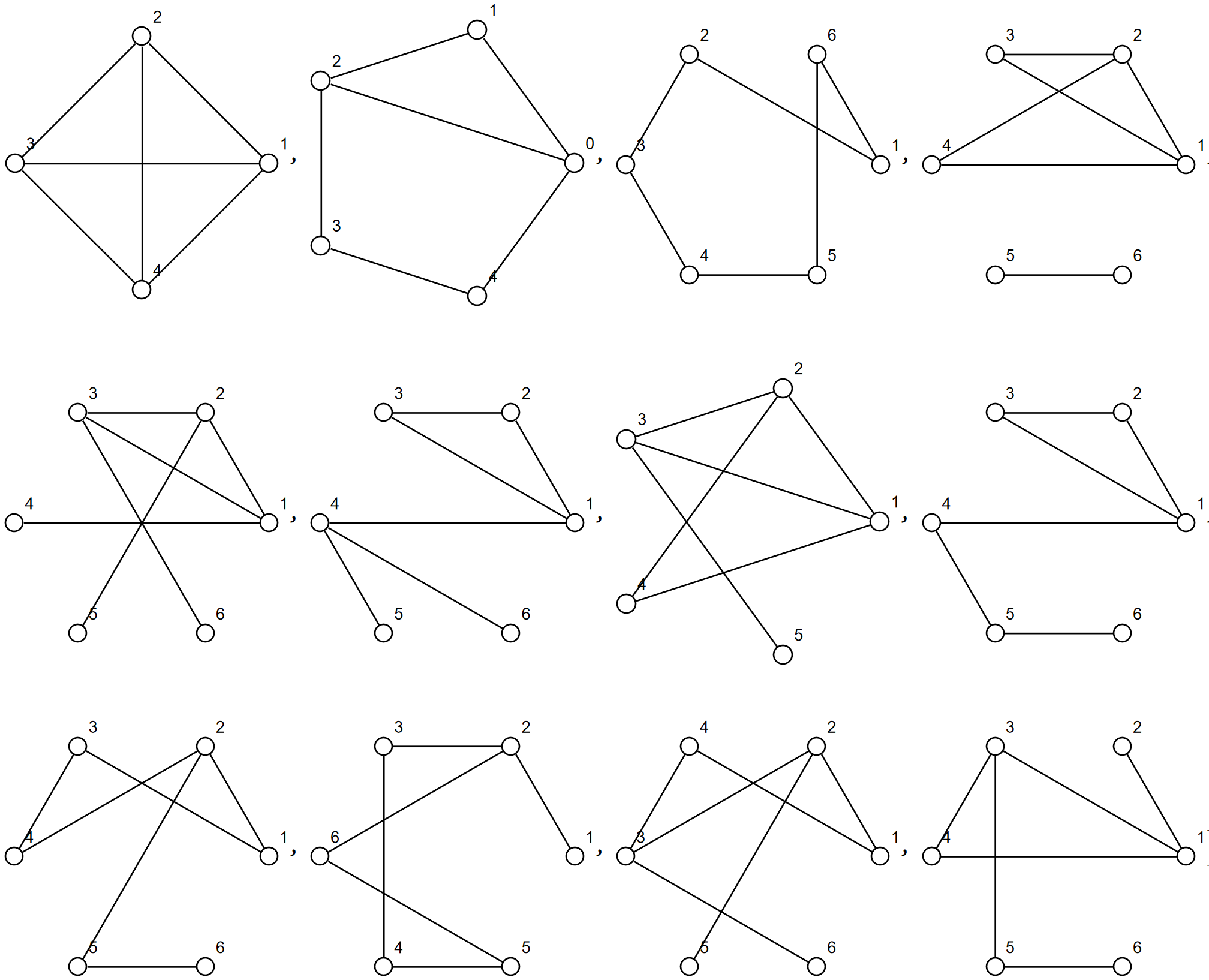}
    \caption{Bases of $W_{2,2,6}$}\label{fig:c246bases}
    \end{center}
\end{figure}
In all classes but one, the sign of the minor
 is fixed by the ordering of $a,b,c,d,e,f$.
The only exception is the {\em cycle}:
\begin{align*}
 [16,12,23,34,45,56] = (a{-}c)(a{-}d)(a{-}e)(b{-}d)(b{-}e)(b{-}f)(d{-}f)   (c-e)(c-f)  \\
{ } \qquad \hfill
   \cdot (abd-abe-acd+acf+ade-adf+bce-bcf-bde+bef+cdf-cef) \, .   
\end{align*}
Here the minor has
a cubic factor with $12$ terms which can have any sign. For instance, fix $( a,b,c,d,e) = (1,3,4,7,8).$ Then the cubic seen above is positive for $f > 47/5$, zero for $f=47/5$, and negative for $f < 47/5$. This shows that both the matroid and the oriented matroid of $\wedge^2 Z$ may change as $Z$ varies over positive matrices.

The combinatorics of $C_{k,m,n}(Z)$ also changes. For $f = 9$, our $5$-dimensional polytope has $f$-vector $(15,75,143,111,30)$. Among the $30$ facets, there are $18$ simplices and six double pyramids over the pentagons, like $\{12,13,14,15,16,23,56\}$. 
The $18$ simplex facets include
$$
\{ 12, 23, 34, 45, 56 \},\,\, \{ 12, 23, 34, 56, 16 \},\,\, \{ 12, 16, 34, 45, 56 \} \, .$$
For $f = 47/5$, these three simplices lie in a hyperplane, and they are replaced by one facet $$  \{ 12, 23, 34, 45, 56, 16 \} \, ,$$
which is a cyclic polytope $C_{4,6}$.
And, for $f > 47/5$, this is replaced by  three other simplex facets:
$$ \{12, 16, 23, 34, 45\}, \,\,   \{ 12, 16, 23, 45, 56 \} \,\, \{ 16, 23, 34, 45, 56 \} \, .$$
The $f$-vector is the same for $f < 47/5$ and $f > 47/5.$
\subsection{The case $m=k=2$}
\label{subsec:The case $m=k=2$}
When $k=2,$ the ground set of the matroid $W_{2,m,n}$ may be identified with the edges of a complete graph on $n$ labeled vertices. This matroid has been studied in the contexts of graph connectivity, graph rigidity, and algebraic statistics \cite{brakensiek2024rigidity, Crespo_Ruiz2023}. In particular, setting $d = n-m-2,$ the matroid of $\wedge^2Z$ is equal to the dual of the \emph{hyperconnectivity matroid} $\cH_{d}(n)$ by \cite[Theorem 1.1]{brakensiek2024rigidity}. This matroid was introduced by Kalai to understand highly connected graphs \cite{kalai}, and has rank $d \, n - \binom{d+1}{2}$ by Property~1 of \cite{kalai}. Thus the rank of its dual is indeed $\binom{n}{2} - d \, n +\binom{d+1}{2}=  \binom{n-d}{2} = \binom{k+m}{2}.$ The hyperconnectivity matroid $\cH_d(n)$ is also equal to the algebraic matroid of $n \times n$ skew-symmetric matrices of rank at most $d$ \cite[Proposition 3.1]{Crespo_Ruiz2023}. A characterization of the graphs which represent independent sets in $\cH_d(n)$ is known only for $d=2$ \cite{bernstein}.

To simplify combinatorial analyses in the hyperconnectivity matroid, we introduce the operations of cutting and gluing on graphs. 

\begin{definition}[Cutting and gluing]\label{lem:gluing}
    Let $G$ be a simple undirected graph on $n$ vertices.
    \begin{enumerate}
        \item Let $e=uv$ be an edge in $G.$ The $\text{Cut}(G,e,v)$ is the graph on $n+1$ vertices obtained by introducing a new vertex $v'$ and replacing $uv$ with $uv'.$
        \item Let $u$ and $v$ be vertices of $G$ with distance at least three. Then $\text{Glue}(G,u,v)$ is the graph on $n-1$ vertices obtained by identifying $u$ and $v$ in the list of edges.
    \end{enumerate}
\end{definition}

The distance condition for gluing guarantees that the new graph will be simple. Both operations preserve the number of edges. Furthermore, they are inverses: indeed, $\text{Glue}(\text{Cut}(G,e,v),v,v') = G.$ We observe the following additional property.
\begin{lemma}\label{lem:cutglue1}
    Let $G$ be a simple undirected graph on $n$ vertices. 
    \begin{enumerate}
        \item  If $G$ is an independent set in $W_{2,m,n}$ then any cutting of $G$ is an independent set in $W_{2,m,n+1}.$
        \item If $G$ is a dependent set in $W_{2,m,n}$ then any gluing of $G$ is a dependent set in $W_{2,m,n-1}.$
    \end{enumerate}
    
\end{lemma}
For example, all circuits in $W_{2,2,n}$ may be obtained by gluings of the three circuits in Figure \ref{fig:circuits4}. Lemma \ref{lem:cutglue1} allows us to display the circuit data for all $n$ in a compact form.
\begin{figure}[!h]
    \begin{center}
    \includegraphics[width=0.6\linewidth]{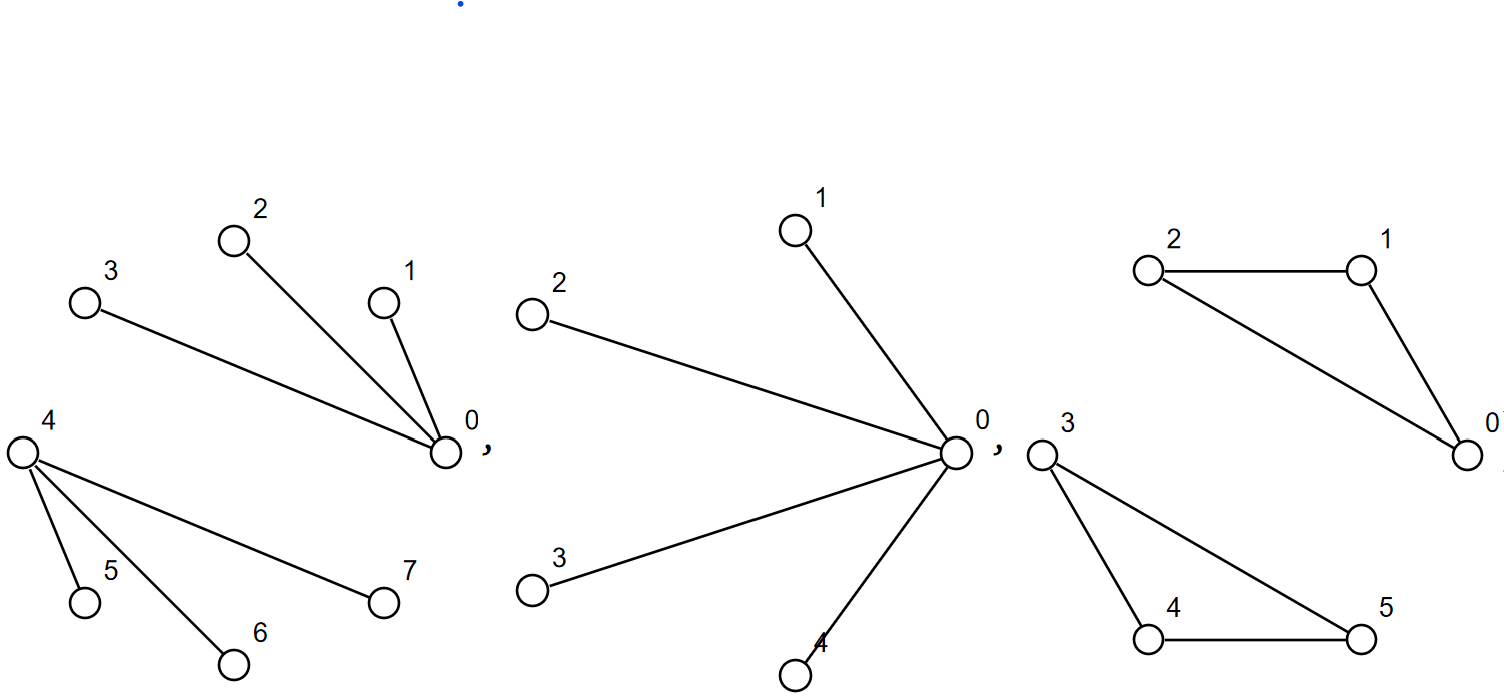}
    \end{center}
    \caption{Circuits in $W_{2,2,n}$ up to gluing}\label{fig:circuits4}
\end{figure}

The $f$-vector of the exterior cyclic polytope is given in Table \ref{table:fvector} for $k=m=2$ and $n$ small. We checked computationally with many positive $Z$ by taking a random Vandermonde matrix, computing its $LU$ decomposition, normalizing $U,$ and setting $Z = U^TDL$ where $D$ is the diagonal of the first $k \times k$ minor of $U.$ Beginning with $n=7,$ Vandermonde matrices themselves (before reversing the $LU$ decomposition) do not give a random enough sample; all exterior cyclic polytopes of Vandermonde matrices which we computed have $72$ facets for $n=7$, rather than the general facet count of $82.$ The data in \ref{table:fvector} should be regarded as a conjecture that even though the combinatorial type changes as $Z$ varies, the $f$-vector remains constant. 
\begin{table}[h!]
\centering
\renewcommand{\arraystretch}{1.2}
\begin{tabular}{rrrrrrr}
$n = 5\ :$  & 10 & 35  & 55   & 40  & 12  & 1 \\
$n = 6\ :$  & 15 & 75  & 143  & 111 & 30  & 1 \\
$n = 7\ :$  & 21 & 147 & 328  & 282 & 82  & 1 \\
$n = 8\ :$  & 28 & 266 & 664  & 616 & 192 & 1 \\
$n = 9\ :$  & 36 & 450 & 1217 & 1191& 390 & 1 \\
\end{tabular}
\caption{The $f$-vectors for $n = 5$ through $n = 9$}\label{table:fvector}
\end{table}

The rest of this subsection will be dedicated to the proof of the following theorem. Let $S_n$ act on the matrix $Z$ by permuting the columns.
\begin{theorem}\label{thm:combotype}
    The matroid of $\wedge^2Z$ for $Z \in {\rm Mat}_{>0}(4,n)$ is equal to the wedge power matroid $W_{2,2,n}$ outside of the closed locus where the polynomial $\det [Z_{12} \ Z_{23} \ Z_{34} \ Z_{45} \ Z_{56} \ Z_{16}] $ or one of its permutations is zero.
\end{theorem}

In particular, this shows that the combinatorial type of $C_{2,2,n}(Z)$ is constant on each region in the complement of the hypersurfaces described in Theorem \ref{thm:combotype}. The polynomial in the theorem may also be written in Pl\"ucker coordinates of $Z$ as 
\begin{equation}
    p_{1234}p_{1356}p_{2456}-p_{1235}p_{1346}p_{2456}+p_{1235}p_{1246}p_{3456} \, .
\end{equation}
If one instead takes maximal minors $q_{ij}$ of any matrix whose rowspan is the kernel of $Z$ and applies Pl\"ucker relations, this polynomial obtains the simpler form $q_{12}q_{34}q_{56}-q_{23}q_{45}q_{16}.$ This expression appears in physics in the algebraic prefactor of the six-dimensional scalar hexagon integral, see~\cite[Section 2.5, Equation 42]{hexagon}. 

The proof of Theorem \ref{thm:combotype} is a computational check, which we present as follows. The question we must answer is: for which bases $\{i_1j_1, \, \ldots, \, i_6j_6\}$ in $W_{2,2,n}$ does there exist a positive matrix $Z$ with $Z_{i_1} \wedge Z_{j_1}, \, \ldots, \, Z_{i_6} \wedge Z_{j_6}$ dependent? We call a basis where such $Z$ exists \emph{dynamic}, and a basis without this property \emph{static}. For example, we saw in Subsection \ref{subsec:The case $m=k=2$ and $n=6$} that the basis $\{12,23,34,45,56,16\}$ is dynamic. 

\begin{lemma}\label{lem:staticcut}
    Suppose that the basis represented by a graph $G$ is static. Then any basis obtained by cutting $G$ is static.
\end{lemma}
\begin{proof}
    We prove the contrapositive. Suppose that there exists a positive matrix $Z$ for which the edges of $\text{Cut}(G,e,v)$ represent a set of dependent columns in $\wedge^kZ.$ This means we have some $Z_{iv}, Z_{jv'}$ among these dependent columns. If $v = v',$ then the columns remain dependent.
\end{proof}
These operations give a poset structure to the $47$ combinatorial types of bases of $W_{2,2,n},$ where $G > G'$ if $G'$ is obtained by cutting $G.$  The poset and bases are pictured in Appendix~\ref{app:hasse}. 
\begin{proof}[Proof of Theorem~\ref{thm:combotype}]
   In the poset of bases, the $K_4$ and ``house'' graph (next to the $K_4$ in Figure~\ref{fig:c246bases}) dominate every basis except for the $6$-cycle. By Lemma~\ref{lem:staticcut}, if these two are static, then so is every basis other than the $6$-cycle. 
    
    We note that the property of being static or dynamic is invariant under cyclic rotations of the vertex labels, but it is not invariant under the action of the full symmetric group. Thus we must show that for every permutation in $S_5,$ the house graph is static. (The graph $K_4$ has the full symmetric group $S_4$ as its automorphism group, so it is enough to check once). By Lemma~\ref{lem:slinvariant} the matroid of $\wedge^kZ$ only depends on the Pl\"ucker coordinates of $Z.$ Thus it suffices to parameterize the positive Grassmannian $\Gr(4,5)$ by matrices
    \[Z = \begin{bmatrix}
    1 & 0 & 0 & 0 & - x_1 \\
    0 & 1 & 0 & 0 & x_2 \\
    0 & 0 & 1 & 0 & - x_3 \\
    0 & 0 & 0 & 1 & x_4
    \end{bmatrix} \, ,\]
    with each $x_i$ positive. We then checked that for each permutation $\sigma $ in $S_5$, the polynomial given by $\det[Z_{\sigma(12)} \ Z_{\sigma(13)} \ Z_{\sigma(23)} \ Z_{\sigma(34)} \ Z_{\sigma(45)} \ Z_{\sigma(15)}]$ is a positive combination of monomials in the $x_i$, and hence itself positive.
\end{proof}

\section{Schubert hyperplanes}
\label{sec:Schubert hyperplanes}

\subsection{A general characterization}
Some of the facets of the exterior cyclic polytope are Schubert hyperplanes. In all known examples for $m=1, 2$ and $4$, only these Schubert hyperplanes contribute to the boundary of the amplituhedron. With this motivation, in Subsection \ref{subsec:A general characterization} we define a \emph{Schubert hyperplane} of the wedge power matroid and give several equivalent characterizations in Lemma \ref{lem:schuberthyp}. We characterize all Schubert hyperplanes of $C_{2,2,n}(Z)$ in Theorem~\ref{thm:nonschubert}. In Section \ref{subsec:A duality for $m=k=2$} we define a new polytope $\widetilde{C}_{2,2,n}(Z)$ by deleting the non-Schubert facets and relate its dual to the image of $Z$ under the twist map.

\label{subsec:A general characterization}
We will use $F$ to refer to a hyperplane in the matroid sense, and $H_F$ to refer to the corresponding projective linear space in $\bP(\bigwedge^k\bR^{k+m})$.
\begin{definition}
    We call a hyperplane $F$ of $W_{k,m,n}$ a \emph{Schubert hyperplane} if, for generic $Z,$ the intersection $H_F(Z) \cap \Gr(k,k+m)$ is a Schubert divisor in $\Gr(k,k+m).$
\end{definition}

We check that the property of being Schubert is constant for an open set of matrices~$Z$. Indeed, in the space $\bP(\bigwedge^{k}\bR^{k+m})^\vee$ of hyperplanes in $\bP(\bigwedge^{k}\bR^{k+m})$, a hyperplane is Schubert if and only if its coefficients satisfy the Pl\"ucker relations; see the proof of $(1) $ to $(2)$ in Lemma \ref{lem:schuberthyp}. Thus a non-Schubert hyperplane $H_F(Z)$ can only become Schubert for a Zariski closed set of matrices $Z.$  %The property of being a Schubert hyperplane is important for studying the amplituhedron. For $k=m=2,$ we will see that only the Schubert hyperplanes are needed to give a semialgebraic description of the amplituhedron.

\begin{lemma}[Schubert hyperplanes]\label{lem:schuberthyp}
    Let $F = \{Q_1, \, \ldots, \, Q_l\}$ be a hyperplane in $W_{k,m,n}$. Let $r = k+m.$ Then the following are equivalent. 
    \begin{enumerate}
        \item $F$ is a Schubert hyperplane.
        \item For $Z$ generic, the entries of the normal vector to $H_F(Z)$ satisfy the Pl\"ucker relations.
        \item For $Z$ generic, there is a $(r-k-1)$-space in $\bP^{r-1}$ meeting all the $(k-1)$-spaces $Z_{Q_1}, \ldots, Z_{Q_l}$.
    \end{enumerate}
\end{lemma}

\begin{proof}
    We first prove that $(1)$ and $(2)$ are equivalent. Suppose that $H$ is the Schubert divisor $\Gr(k,r)$ of all $(k-1)$-spaces in $\bP^{r-1}$ meeting a fixed $(r-k-1)$-space $P.$ We represent $P$ as the kernel of a $k \times r$ matrix, which we also denote $P.$ We represent an arbitrary point $L$ of $\Gr(k,r)$ as the rowspan of a $k \times r$ matrix, which we also call $L.$ Then $L$ lies on $H$ if and only if the determinant of the $k \times k$ matrix $P \cdot L^T$ vanishes. We use $p_I$ to denote maximal minors of $P$ and $q_I$ to denote maximal minors of $L.$ The Cauchy-Binet formula then gives us the equation 
    \[\sum_{I \subset \binom{[r]}{k}} p_Iq_I = 0.\]
    This proves the equivalence of (1) and (2). To prove (3), choose $N = \binom{r}{k}-1$ independent elements to span the hyperplane. By definition, they have a common transversal $(r-k-1)$-space if and only if the hyperplane is Schubert. Since the other elements $Q_i$ lie in the same hyperplane, they must also meet this common transversal.
\end{proof}

\begin{eg}\label{eg:c246schubert}
There are four combinatorial types of hyperplanes in $W_{2,2,6}$, depicted in Figure \ref{fig:c246facets}.
\begin{figure}[!h]
    \centering
    \includegraphics[width=1.0\linewidth]{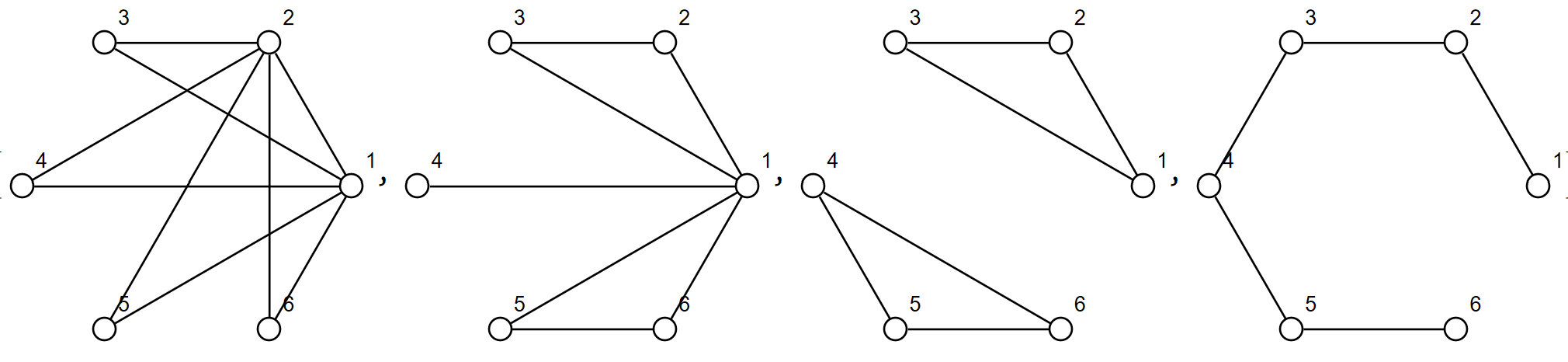}
    \caption{Hyperplanes of $W_{2,2,6}$ }\label{fig:c246facets}
\end{figure}
The number of facet-defining hyperplanes of $C_{2,2,6}$ of each type is $(6,6,3,15),$ giving a total of thirty facets, as seen in Table \ref{table:fvector}. The first three types correspond to Schubert hyperplanes in $\Gr(2,4).$ That is, they are hyperplanes parameterizing lines in $\bP^3$ meeting a fixed line. The common transversal line in each case is $12, (123) \cap (156)$ and $(123) \cap (456).$ The final graph in Figure \ref{fig:c246facets} corresponds to a non-Schubert hyperplane. The corresponding facet of $C_{2,2,6}(Z)$ is a $5$-simplex. 
\end{eg}

% In general, hyperplanes of $W_{k,m,n}$ which are also independent sets will be non-Schubert hyperplanes. 
One might conjecture from Example \ref{eg:c246schubert} that every hyperplane containing a circuit is a Schubert hyperplane. However, the following example shows that there may exist non-Schubert hyperplanes which are dependent sets in $W_{k,m,n}.$

\begin{eg}
    Let $k=2, \, m = 3$ and $n = 15.$ We label the columns of $Z$ by $1$ through $9$ and then $A$ through $F.$ Then the set consisting of \[\{12,13,14,15,16,17,18,19,1A,1B,1C,1D,1E,1F,67,89,AB,CD,EF\}\]
    is a flat in $W_{2,3,15}.$ It contains for example the circuit $\{12,13,14,15,16\}.$ The span $H$ of the corresponding vectors in $\bigwedge^2\bR^5$ has dimension $9,$ so it is indeed a hyperplane. It contains all vertices of the form $Z_1 \wedge Z_ \ast.$ However, one may check computationally that the coefficients of $H$ do not satisfy the Pl\"ucker equations for $Z$ generic. The intuition is that given nine lines in $\bP^4$, the condition that four of them meet at a point is not enough to guarantee a common transversal plane to all nine.
\end{eg}

\begin{theorem}\label{thm:nonschubert}
    The Schubert facets of the polytope $C_{2,2,n}(Z)$ are exactly the $\binom{n}{2}$ hyperplanes given by the vanishing of $\gen{Y \, \bar{i} \bar{j}}$ for $1 \leq i<j \leq n,$ where $\bar{i} \bar{j} := (i-1 i i+1) \cap (j-1 j j+1).$ Furthermore, these Schubert facets intersect transversally in $\Gr(2,4)$ for every $Z \in \rm{Mat}_{>0}(4,n)$.
\end{theorem}

\begin{proof}
Any Schubert hyperplane will contain one of the three circuits in Figure~\ref{fig:circuits4}. Thus we only need to consider the following two cases:
\begin{enumerate}
    \item three of the five lines lie in a common plane $P$, or
    \item three of the five lines meet at a point $q$.
\end{enumerate}

To complete the proof, it suffices to show that $F$ is a facet if and only if the transversal line $L$ furnished by Proposition \ref{lem:schuberthyp} is of the form $\overline{ij}$ in each case.
We show this explicitly by using the fact that if $\gen{YL}$ is a facet, then $\langle Y \, L \rangle \geq 0$ for any vertex $Y$ of $C_{2,2,n}(Z).$ We spell out the details for case 1, and case 2 follows by a similar analysis.

Call the remaining two lines $D$ and $E.$ Let us consider case 1 above, where $L=(P \cap D) \wedge (P \cap E).$ Write $P=(ijk)$, with $i<j<k$. In the following we take all indices to be ordered modulo $n$. We first show that $P$ must involve consecutive indices. For that, we use eq.~\eqref{plane_int_line} and write
\begin{equation}\label{Y_L_1}
\begin{aligned}
    \langle Y \, L \rangle = &\langle Y \, ij \rangle \langle ij \, (kD) \cap (kE) \rangle + \langle Y \, jk \rangle \langle jk \, (iD) \cap (iE) \rangle \\
    & + \langle Y \, ik \rangle \langle ik \, (jD) \cap (jE) \rangle \, .
    \end{aligned}
\end{equation}
Assume that $i+1<j<k-1$. By plugging $Y=(i+1*)$ and $Y=(j+1*)$ in \eqref{Y_L_1} for $*=i,j,k$ and requiring that \eqref{Y_L_1} is non-negative, we obtain that exactly two out of three coefficients of $\gen{Y-}$ in \eqref{Y_L_1} must vanish. Since $i,j,k$ are distinct, and so are $D$ and $E$, this is a contradiction. Therefore, either $k=j+1$, or $j=i+1$. However, since $n \geq 4$, one can repeat the same argument to show that in fact there can be only one space between $i,j,k$, so that $P=(j-1,j,j+1)=\bar{j}$. Write now $D=(ab)$ and $E=(cd)$. Using again eq.~\eqref{plane_int_line} and applying the same reasoning as above, one finds that without loss of generality $c=b=a+1$ and $d=a+2$. Therefore, $L = \overline{j,a+1}$ has the desired form. 

To prove that the Schubert hyperplanes intersect transversally for any $Z,$ we use that the twist map preserves positivity of matrices.  If $i, j, k, l$ are distinct, then $\gen{\bar{i}\bar{j}\bar{k}\bar{l}}$ is a maximal minor of $\tau(Z).$ If $Z$ is positive, then this minor is nonzero. Thus the lines $\bar{i}\bar{j}$ and $\bar{k}\bar{l}$ must not intersect. 
\end{proof}

We now describe a subset of Schubert facets of $C_{k,m,n}(Z)$ believed to form part of the algebraic boundary of the amplituhedron. In \cite{Arkani_Hamed_2018} the authors conjecture that the amplituhedron $\mathcal{A}_{k,m,n}(Z) \subset \Gr(k,k+m)$ has the following semialgebraic description:
\begin{align}
&(-1)^k \langle Y\, 1 \, (i_1 i_{1}+1) \dots (i_{\frac{m-1}{2}} i_{\frac{m-1}{2}} + 1) \rangle \, , \ \langle Y\, (i_1 i_{1}+1) \dots (i_{\frac{m-1}{2}} i_{\frac{m-1}{2}} + 1) n \rangle >0 \quad m \ \text{odd}, \label{facets_m_ood} \\
&\langle Y \,   (i_1 i_{1}+1) \dots (i_{\frac{m}{2}} i_{\frac{m}{2}} + 1) \rangle >0  \quad m \ \text{even}, \label{facets_m_even} \\
&(\langle Y \, 12 \dots m-1 m) \rangle , \,  \dots , \, \langle Y\,  12 \dots n-1 n) \rangle) \text{ has $k$ sign flips}.
\end{align}
To our knowledge, this semialgebraic description of the amplituhedron has been proven only for $m=2$ in \cite[Theorem 5.1]{Parisi:2021oql}. For this case, the algebraic boundary is known to consist of the $n$ Schubert divisors $\langle Y \, ii+1 \rangle = 0$. This has been proven for $k=2$ in \cite[Proposition 3.1]{Ranestad_2024}, but a similar proof works for higher~$k$. Furthermore, in \cite[Corollary 8.8]{Even-Zohar:2021sec} the authors showed that the algebraic boundary of the $m=4$ amplituhedron is given by the Schubert divisors $\langle Y \, ii+1jj+1 \rangle = 0$.

\begin{proposition}\label{prop:C_facets}
    Equations \eqref{facets_m_ood} and \eqref{facets_m_even}, for $m$ odd or even respectively, define facets of $C_{k,m,n}(Z)$.
\end{proposition}

\begin{proof}
    By positivity of $Z$, all vertices of $C_{k,m,n}$ lie on the same side of each hyperplane in \eqref{facets_m_ood} and \eqref{facets_m_even}. We are left to show that each such hyperplane contains $\binom{k+m}{k}-1$ vertices of $C_{k,m,n}$ in general position. For general $n$, the number of vertices lying on one such hyperplane is
    \begin{equation}
        \binom{n}{k} - \binom{n-m}{k} \, .
    \end{equation}
    For $n=k+m$ this is exactly equal to $\binom{k+m}{k}-1$. Since $n$ is at least $k+m,$ it is enough to consider vertices labeled by subsets of $[k+m]$. These are linearly independent since $Z$ is positive. %In fact, the $(k+m)$-minor of $Z$ containing the first columns is positive, showing that $Z$ is an isomorphism when restricted to the first $k+m$ standard basis vectors. 
\end{proof}

\subsection{A duality for $m=k=2$}
\label{subsec:A duality for $m=k=2$}
We define a new polytope which only involves the Schubert hyperplanes. We are giving up some combinatorial control of the vertices for a better understanding of the facets.

\begin{definition}
    The \emph{Schubert exterior cyclic polytope} $\widetilde{C}_{k,m,n}(Z)$ is obtained from the exterior cyclic polytope $C_{k,m,n}(Z)$ by deleting all facet inequalities corresponding to non-Schubert facets. 
\end{definition}

\begin{eg}
    When $m=1$ every hyperplane is Schubert, and hence $C_{k,1,n}(Z) = \widetilde{C}_{k,1,n}(Z)$. The same is true for $k=1$.
\end{eg}
In principle, when deleting facet inequalities one could end up with a polyhedron which is unbounded in every affine chart, and hence not a polytope. We show that for most parameters $k,m,n$ we actually obtain a polytope. In the following we view normal vectors as points in $(\bP^{N})^\vee$.

\begin{lemma}\label{lem:polyhedron}
    Consider a projective polyhedron $P \subset \bP^
    N.$  Then $P$ is bounded in some affine chart if and only if the normal vectors to the facets do not lie on a common hyperplane in $(\bP^{N})^\vee.$
\end{lemma}
\begin{proof}
The polyhedron is unbounded in every affine chart if and only if the cone over it in $\bR^{N+1}$ is not proper, i.e. contains some linear space $L.$ In this case every normal vector lies in $\bP(L^\perp).$
\end{proof}

Proposition \ref{prop:C_facets} tells us that $C_{k,m,n}(Z)$ has at least $\binom{n}{m/2}$ Schubert facets. When this number is at least $\binom{k+m}{k},$ the $\widetilde{C}_{k,m,n}(Z) \subset \bP(\bigwedge^k\bR^{k+m})$ is a polytope by Lemma \ref{lem:polyhedron}. Indeed, the normal vectors span a space of projective dimension $\binom{n}{m/2}-1$. In the case $k=m=2,$ Lemma \ref{thm:nonschubert} produces $\binom{n}{2}$ hyperplanes and $\widetilde{C}_{2,2,n}(Z)$ lives in $\bP^5,$ so we indeed get a polytope. %\lizzie{following prop is not proven .. why is it true? it's also not used anywhere}

% It follows by definition that the Schubert exterior polytope also contains information about the dual polytope of $C_{k,m,n}(Z)$, where we denote the latter by $C_{k,m,n}(Z)^{*}$.
%\begin{proposition}\label{prop:C_tilde_f=C_star_v}
%     The facets of $\widetilde{C}_{k,m,n}(Z)$ are (projectively) dual to vertices of $C_{k,m,n}(Z)^*$ lying on $\Gr(k,k+m)$. Each of the latter is a point in the intersection of at least $\binom{k+m}{k}-1$ Schubert divisors of the form
%     \begin{equation}
%         \big\{Y \in \Gr(k,k+m) \, : \, \langle Y \, i_1 \, \cdots \, i_k \rangle = 0 \big\} \, , \qquad (i_1 \, \cdots \, i_k) \in \binom{[n]}{k} \, .
%     \end{equation}
% \end{proposition}

We study the Schubert exterior cyclic polytope $\widetilde{C}_{2,2,n}(Z)$ and its relation to $C_{2,2,n}(Z)$. For that, we find it useful to present a result appearing in \cite[Section 14]{Arkani_Hamed_2018}, whose proof can be found in \cite[Corollary 4.10]{Even-Zohar:2023del}. In that reference it is shown that eq.~\eqref{magic_det} is a cluster variable for $\Gr_{>0}(4,n)$.

\begin{proposition}[Positive determinants for $k=m=2$]
\label{prop_magic_det}
    Let $Z \in {\rm Mat}_{> 0}(4,n)$. Let $i_1 < i_2 < i_3 $ and $j_1 < j_2 < j_3$ be elements in $[n]$ such that $i_r \leq j_r$ for every $r=1,2,3$ and equality does not hold for all $r$. Let $a<b$ in $[n] \setminus ([i_1+1, \dots, i_3-1] \cup [j_1+1, \dots, j_3-1])$. Then,
    \begin{equation}\label{magic_det}
        \det \begin{pmatrix}
            \langle a i_1 i_2 i_3 \rangle & \langle a j_1 j_2 j_3 \rangle \\
            \langle b i_1 i_2 i_3 \rangle & \langle b j_1 j_2 j_3 \rangle
        \end{pmatrix} = \langle ab (i_1i_2i_3) \cap (j_1 j_2 j_3) \rangle  \geq 0 \ .
    \end{equation}
\end{proposition}

The polytope $\widetilde{C}_{2,2,n}(Z)$ contains $C_{2,2,n}(Z)$ by definition.
Moreover, by Theorem~\ref{thm:nonschubert} and Proposition~\ref{prop_magic_det} we have that
\begin{equation}\label{eq:C_tilde_ineq}
    \widetilde{C}_{2,2,n}(Z) = \{Y \in \mathbb{P}^5 \, : \,  \langle Y \, \overline{ij} \rangle \geq 0 \, , \quad \forall \, 1 \leq i<j \leq n\} \, .
\end{equation}
The polytope $\widetilde{C}_{2,2,n}$ is related to the dual polytope of $C_{2,2,n}$, which we denote by $C_{2,2,n}^{*}$, as follows.
\begin{proposition}\label{prop:parity dual}
    We have that
\begin{equation}\label{eq_C_tildeC_duality}
    \widetilde{C}_{2,2,n}(Z) =  C_{2,2,n}(W)^* \, ,
\end{equation}
where $W=\tau(Z)$ is the twist of $Z$, according to Definition~\ref{def:twist_map}. 
\end{proposition}
\begin{proof}
    By definition of the exterior cyclic polytope, we have that $C_{2,2,n}(W) \subset \mathbb{P}^5$ is equal to the convex hull of the $\binom{n}{2}$ points $W_i \wedge W_j = \overline{ij}$. Note that $\overline{ii+1} = (ii+1)$, and hence by Proposition~\ref{prop_magic_det} we have that $\overline{ij}$ is an inward-pointing normal vector of $C_{2,2,n}(W)^* \subset \mathbb{P}^5$. Thus $C_{2,2,n}(W)^*$ is exactly given by the right hand side of eq.~\eqref{eq:C_tilde_ineq}, and hence it is equal to $\widetilde{C}_{2,2,n}(Z)$.
\end{proof}

By eq.~\eqref{eq_C_tildeC_duality} it follows that the vertices of $\widetilde{C}_{2,2,n}(Z)$ lying on $\Gr(2,4)$ correspond to Schubert facets of $C_{2,2,n}(W)$, which by Lemma~\ref{thm:nonschubert} are projectively dual to $ (ij)$ for $1 \leq i<j \leq n$. Note that here we also used that for $W=\tau(Z) \in {\rm Mat}_{>0}(4,n)$ we have that $\tau(W)_i \wedge \tau(W)_j = (ij)$ as elements in $\mathbb{P}(\bigwedge^2 \mathbb{R}^4)$, which can be easily verified. We have therefore shown that
\begin{equation}\label{Ex_C_tildeB_C}
       \Gr(2,4) \cap {\rm vert}(\widetilde{C}_{2,2,n})  =  {\rm vert}(C_{2,2,n}) \,,
\end{equation}
where ${\rm vert}(P)$ denotes the set of vertices of the polytope $P$.

\subsection{Schubert facets for $k=3$ and $m=2$}
\label{subsec:Some results for higher $k$ and $m$}

Let us consider the case of $k=3$ and $m=2$. In the following, we determine computationally the Schubert facets of $C_{3,2,6}(Z)$.
Recall that the exterior cyclic polytope $C_{3,2,n}(Z)$ is defined as the convex hull of the $\binom{n}{3}$ points $(ijk)$ in $ \mathbb{P}(\bigwedge^3 \mathbb{R}^5) \cong \mathbb{P}^9$ with $1 \leq i<j<k \leq n$.

\begin{eg}\label{eg:k=3_n=6}
    Let $n=6$. We compute the $f$-vector of $C_{3,2,6}(Z)$ to be
    \begin{equation}
        (20, \, 160, \, 675, \, 1659, \, 2469, \, 2227, \, 1173, \, 327, \, 38, \, 1) \, .
    \end{equation}
    % confirmed with non Vandermonde example 

    Among the 38 facets, 20 are Schubert, which we check computationally by checking whether their coefficients satisfy Pl\"ucker relations as per Theorem \ref{thm:nonschubert}. Each Schubert facet is supported by a hyperplane in $\mathbb{P}^9$ projectively dual to a point in $\Gr(2,5)$. Among these points, we find the six given by $(ii+1)$ from Proposition~\ref{prop:C_facets}. Each associated Schubert facet contains 16 vertices. 
    The remaining 14 Schubert facets come in two families. The first consists of cyclic shifts of  the facet projectively dual to $(345) \cap (6123)$, containing 14 vertices whose complement in $\binom{[6]}{3}$ is 
    \begin{equation}
    (1 2 4) \, , \ (1 2 5) \, , \ (1 4 6) \, , \ (1 5 6) \, , \ (246) \, , \  (256).
    \end{equation}This family contains 12 facets.
    The second family consists of two facets, for which one representative is $ (12) \cap (3456) \wedge (56) \cap (1234) = (34) \cap (5612) \wedge (12) \cap (3456)$, containing the 12 vertices 
    \begin{equation}
    (1 2 3) \text{ and its cyclic shifts} \, , \ (1 2 4) \, , \ (1 25) \, ,  \ (1 34) \, , (346) \, , \  (256) \, , \ (356) \, .
    \end{equation}
We also computed the $f$-vector of the Schubert exterior polytope $\widetilde{C}_{3,2,6}$ and found it to be the same as that of $C_{3,2,6}^*$. Moreover, we checked that the normal vectors to the Schubert facets correspond to elements $W_i \wedge W_j \wedge W_k$ with $1 \leq i<j<k \leq n$, where $W= \tau(Z)$ is the image of $Z$ under the twist map of Definition~\ref{def:twist_map}. For example,
\begin{equation}
    (ii+1) = (i-2,i-1,i,i+1) \cap (i-1,i,i+1,i+2) \cap (i,i+1,i+2,i+3) \, , \quad i \in [n] \, ,
\end{equation}
corresponds to $W_{i-1} \wedge W_{i} \wedge W_{i+1}$ under the isomorphism $\mathbb{P}(\bigwedge^2 \mathbb{R}^5) \cong \mathbb{P} (\bigwedge^3 \mathbb{R}^5)$ induced by the standard inner product on $\mathbb{R}^5$. Similarly, $(i-1,i,i+1,i+2) \cap (j-1,j,j+1) $ for $i,j \in [n]$ with $|i-j|>1$ corresponds to $W_i \wedge W_{j-1} \wedge W_j$. 
\end{eg}

%\begin{eg}\label{eg:k=3_n=7}
%    Let $n=7$. Among the 1020 facets of $C_{3,2,7}(Z)$, 35 are Schubert. Among these, we find the seven ones dual to $( ii+1) $. Then, there are 14 facets dual to $(1234) \cap (456)$ and $(123) \cap (3456)$, together with their cyclic orbits. There are 7 facets dual to $(1234) \cap (56) \wedge (1234) \cap (67)$, and other 7 facets dual to $(1234) \cap (67) \wedge (12) \cap (4567)$, including their cyclic orbits respectively.
%\end{eg}

Based on Example~\ref{eg:k=3_n=6}, we conjecture that $\widetilde{C}_{3,2,n}$ for $n \geq 5$ is a polytope with $\binom{n}{3}$ Schubert facets, each of the latter dual to some $W_i \wedge W_j \wedge W_k$ for $1 \leq i<j<k \leq n$.
More generally, we pose the following question.

\begin{question}\label{question:C_duality}
    Is the analogue of eq.~\eqref{eq_C_tildeC_duality} true for higher $m$ and $k$? 
\end{question}

\section{Convexity of amplituhedra}\label{sec:Convexity of Amplituhedra}

In the following, we fix a matrix $Z \in \text{Mat}_{> 0}(k+m,n)$ for $n \geq k+m$, and omit the dependence on $Z$ in the notation. Our first theorem concerns the convex hull of the amplituhedron in Pl\"ucker space.

\begin{proposition}\label{prop:amphull}
    The convex hull of the amplituhedron $\cA_{k,m,n}$ in $\mathbb{P}^{\binom{k+m}{k}-1}$ is equal to the exterior cyclic polytope $C_{k,m,n}.$
\end{proposition}
\begin{proof}
Because the non-negative Grassmannian is contained in the standard projective simplex, we immediately get the inclusion 
\begin{equation}\label{eq_easy_inclusion}
    \mathcal{A}_{k,m,n} \subset \Gr(k,k+m) \cap C_{k,m,n} \, .
\end{equation}
Monotonicity of the convex hull gives $\text{conv}(\cA_{k,m,n}) \subset C_{k,m,n}.$ The other direction follows because the vertices of $C_{k,m,n}$ are contained in $\cA_{k,m,n}.$ 
\end{proof}

In the following, we want to determine which amplituhedra are extendably convex. In light of Proposition~\ref{prop:amphull}, this amounts to asking for which parameters the inclusion in eq.~\eqref{eq_easy_inclusion} is an equality.
\begin{question}\label{question_int}
    For which values of $k,m,n$ is $\mathcal{A}_{k,m,n}$ extendably convex?
\end{question}
Note that in phrasing this question we dropped the dependence on $Z \in {\rm Mat}_{>0}(k+m,n)$, but it is not clear if this is a reasonable assumption. We expect the answer to Question~\ref{question_int} to depend on $m.$ For $m=0$ the amplituhedron and the exterior cyclic polytope are both equal to $\mathbb{P}^{0} $, and therefore coincide. However, for the case of $m=1$ it follows from the discussion in Section~\ref{subsec:The case $m=1$} that the amplituhedron is not extendably convex. In general, we expect that the amplituhedron is not extendably convex for odd $m$. However, for $m=2$ and $k=2$ we will prove in Section~\ref{subsec:Convexity for $k = m = 2$} that eq.~\eqref{eq_easy_inclusion} is in fact an equality.

If the amplituhedron $\mathcal{A}_{k,m,n}$ is extendably convex, its boundary is equal to $\Gr(k,k+m) \cap \partial C_{k,m,n}$. In particular, the  algebraic boundary would consist of only linear sections in the Grassmannian. It has been proven that this is the case for $m=1,2$ and $4$. However, for $m=6$, the boundary of the amplituhedron contains higher degree components~\cite{higher_m_ampl}.

With this motivation, for the rest of the section we explore the question: which boundaries of $C_{k,m,n}$ intersect the amplituhedron? In other words, what is the linear part of the boundary of $\cA_{k,m,n}$? We first explore this question for all of the hyperplanes in the matroid~$W_{k,m,n}$. 
\begin{definition}
    The \textit{$k$-th exterior arrangement} of type $(m,n)$, denoted  $H_{k,m,n}(Z)$, is the set of hyperplanes in $\bigwedge^k\bR^n$ spanned by the columns of $\wedge^kZ.$ 
\end{definition}
In other words, $H_{k,m,n}(Z)$ is the discriminantal arrangement of the columns of $\wedge^kZ.$ 

\begin{definition}[Positroid hyperplane]
    Consider a hyperplane $F = \{I_1, \, \ldots, \, I_r\}$ in $W_{k,m,n}$, where each $I_i$ is in $\binom{[n]}{k}$. We call $F$ a \emph{positroid hyperplane} if the elements of $F$ form the bases of a positroid of rank $k$ on $n$ elements.
\end{definition}

\begin{lemma}\label{lem:poshyperplane}
    Let $H_F$ be a facet hyperplane of $C_{k,m,n}.$ If the amplituhedron $\cA_{k,m,n}$ intersects the interior of $H_F,$ then $F$ is a positroid hyperplane. 
\end{lemma}

\begin{proof}
    Consider the pre-image of the boundary of $C_{k,m,n}$ in the  standard simplex in $\bP(\bigwedge^k\bR^n).$ This is a simplicial complex $\Delta$ in the boundary of the standard simplex. Taking the pre-image of a single facet $P_F$ gives a maximal simplex in $\Delta.$ Concretely, this pre-image consists of non-negative sums of $e_I,$ such that $I$ is an element of $F.$ Then the amplituhedron intersects the interior of $P_F$ if and only if the non-negative Grassmannian $\Gr_{\geq 0}(k,n)$ intersects the interior of the hyperplane $\widetilde{H}_F := \text{span}\{e_I \, : \, I \in F\}.$ 
    
    Let us determine in which cases this happens. Setting the Pl\"ucker variables $\{p_I \, : \, I \notin F\}$ equal to zero in $\Gr(k,n)$ may impose that other Pl\"ucker variables are zero, because of the Pl\"ucker relations. If this happens then $\Gr(k,n) \cap \widetilde{H}_F$ has dimension less than that of $\widetilde{H}_F.$ 
    
    Let $J$ be the Pl\"ucker ideal. The dimension of $\Gr(k,n) \cap \widetilde{H}_F$ is equal to that of $\widetilde{H}_F$ whenever $\gen{p_I \, : \, I \notin F} \oplus J = \gen{p_I \, : \, I \notin F}.$ But this is true if and only if the elements of $F$ form the bases of a matroid. Suppose this holds. Since we started with a facet of $C_{k,m,n},$ we know that $\widetilde{H}_F$ lies on the boundary of the non-negative simplex. This additional condition is satisfied if and only if the elements of $F$ further form the bases of a positroid.
\end{proof}

Thus instead of studying all hyperplanes in $H_{k,m,n},$ we consider only the positroid hyperplanes.

\subsection{Linear boundaries for $k=2$ and $m = n-4$}
\label{subsec:Linear boundaries for $k=2$ and $m = n-4$}

 %We will use the structure of the hyperconnectivity matroid to prove the following proposition for $d=2$ \elia{What is $d$?}, which corresponds to $m = n-4.$
We now characterize a certain set of linear boundaries of the amplituhedron $\cA_{2, n-4,n}$. Curiously, the amplituhedron for these parameters is mapped via the parity duality map~\cite{Galashin_2020} to the amplituhedron $\cA_{2,2,n}$, which we discuss in the next section. 

\begin{proposition}[Linear boundaries]\label{prop:boundcapamp}
If the amplituhedron $\cA_{2, n-4,n}$ intersects the boundary of a facet hyperplane of $C_{2,n-4,n},$ then that hyperplane is of the form $F_{ijkl} := \text{conv} \Big\{(ab) \ : \ ab \notin \binom{{i,j,k,l}}{2} \Big\}$ corresponding to the complements of complete graphs on four vertices.
\end{proposition}
The facets $F_{ijkl}$ are given by the vanishing of eq.~\eqref{facets_m_even} and \eqref{facets_m_ood} for $n$ even and odd, respectively. For example, if $n$ is even, the hyperplane in $\mathbb{P}^{N}$ with $N=\binom{n-2}{2}-1$ defined by the vanishing locus of eq.~\eqref{facets_m_even} contains all vertices $(ab)$ except those for which both $a$ and $b$ belong to $[n] \setminus \{i_1,i_{1}+1, \dots, i_{\frac{m}{2}}, i_{\frac{m}{2}} + 1\}$. As $m=n-4$, the latter consists of four points $\{i,j,k,l\}$. Hence, the vanishing of eq.~\eqref{facets_m_even} cuts out the facet $F_{ijkl}$.

To prove Proposition~\ref{prop:boundcapamp} we use the following lemma.
As before, we identify flats of $W_{2,m,n}$ with graphs on $n$ vertices. We describe positroid hyperplanes combinatorially in terms of these graphs. 

\begin{lemma}\label{lem:posgraphs}
    Suppose that $F$ is a positroid hyperplane of $W_{2,m,n}$. Then every induced subgraph of $(ij)_{ij \in F}$ on four vertices is among the list in Figure \ref{fig:posgraphs}.
\end{lemma}
\begin{proof}
We represent rank $2$ matroids on $[n]$ as graphs on $n$ vertices with a specified cyclic ordering. If such a matroid is a positroid, then every restriction of it to four elements  $i,j,k,l$ is a positroid with respect to the cyclic order inherited from $[n].$ To every positroid $M$ on four elements we associate a graph $(ij)_{ij \in M},$ obtaining the list in Figure \ref{fig:posgraphs}. Thus if $G$ is a graph representing a positroid hyperplane, then every induced $4$-subgraph of $G$ is in this list.
\end{proof}

\begin{figure}[!h]
    \centering
    \scalebox{0.8}{\input{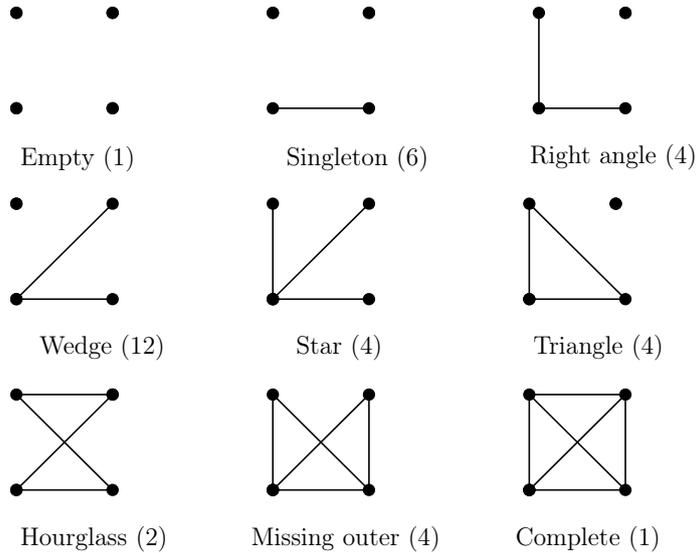}}
    \caption{Positroids on four elements, as graphs}
    \label{fig:posgraphs}
\end{figure}

By a standard argument (see \cite[Proposition 2.1.6]{Oxley}), $F$ is a hyperplane in a matroid whenever $\binom{[n]}{2}\setminus F$ is a circuit in the dual matroid. We define a \emph{positroid circuit} in $\cH_d(n)$ to be the complement of a positroid hyperplane in $W_{2,m,n}$, where $d = n-m-2$. By Lemma \ref{lem:posgraphs}, positroid circuits are circuits of $\cH_d(n)$ such that every induced four-graph is the complement of a graph in \ref{fig:posgraphs}. 

\begin{proof}[Proof of Proposition~\ref{prop:boundcapamp}]
We first show that the linear space $H_{ijkl}$ corresponding to the set $F_{ijkl}$ is indeed a hyperplane in $\bP(\bigwedge^{2}\bR^{n-2}).$ This is because it spans the Schubert hyperplane $\{P  \in \Gr(2,n) \ : \ P \text{ meets } [n]\setminus \{i,j,k,l\} \}.$ The complement of the hyperplane $F_{ijkl}$ is the complete graph on $K_4$ vertices $\{i,j,k,l\}.$ Thus $K_4$ is a circuit in $\cH_2(n).$ By Lemma \ref{lem:poshyperplane}, it suffices to show that these copies of $K_4$ are the only positroid circuits in $\cH_2(n).$ 

Suppose we have a graph $G$ on $n \geq 5$ vertices which is a circuit in $\cH_2(n)$. By Corollary 5.5 in \cite{kalai}, every vertex in the graph has degree at least $3.$ %We have drawn the positroid graph complements with $3, 4,$ or $5$ vertices in Figure \ref{fig:poscomplement}.
We claim that if every size four induced subgraph of $G$ is the complement of a graph in Figure \ref{fig:posgraphs}, then $G$ must be $K_4.$

Suppose for contradiction that $G$ is not $K_4.$ By degree considerations, there must be a subgraph of $G$ which is a four-cycle. But the only complements of graphs in Figure \ref{fig:posgraphs} which contain a four-cycle are $K_4$ and $K_4 \setminus e$ for some edge $e$. 
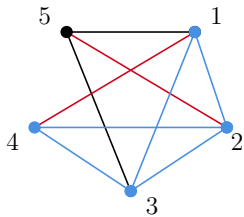
\begin{figure}[!h]
    \centering
    \scalebox{0.8}{\tikzset{every picture/.style={line width=0.75pt}} %set default line width to 0.75pt        

\begin{tikzpicture}[x=0.75pt,y=0.75pt,yscale=-1,xscale=1]
%uncomment if require: \path (0,300); %set diagram left start at 0, and has height of 300

%Straight Lines [id:da3999646625487603] 
\draw [color={rgb, 255:red, 208; green, 2; blue, 27 }  ,draw opacity=1 ]   (300,80) -- (400,140) ;
%Straight Lines [id:da9742857212679529] 
\draw    (300,80) -- (340,180) ;
%Straight Lines [id:da7815041795804303] 
\draw [color={rgb, 255:red, 208; green, 2; blue, 27 }  ,draw opacity=1 ]   (280,140) -- (380,80) ;
%Straight Lines [id:da6731047102186258] 
\draw    (300,80) -- (380,80) ;
\draw [shift={(380,80)}, rotate = 0] [color={rgb, 255:red, 0; green, 0; blue, 0 }  ][fill={rgb, 255:red, 0; green, 0; blue, 0 }  ][line width=0.75]      (0, 0) circle [x radius= 3.35, y radius= 3.35]   ;
\draw [shift={(300,80)}, rotate = 0] [color={rgb, 255:red, 0; green, 0; blue, 0 }  ][fill={rgb, 255:red, 0; green, 0; blue, 0 }  ][line width=0.75]      (0, 0) circle [x radius= 3.35, y radius= 3.35]   ;
%Straight Lines [id:da08548843263831063] 
\draw [color={rgb, 255:red, 74; green, 144; blue, 226 }  ,draw opacity=1 ][fill={rgb, 255:red, 74; green, 144; blue, 226 }  ,fill opacity=1 ]   (380,80) -- (400,140) ;
\draw [shift={(400,140)}, rotate = 71.57] [color={rgb, 255:red, 74; green, 144; blue, 226 }  ,draw opacity=1 ][fill={rgb, 255:red, 74; green, 144; blue, 226 }  ,fill opacity=1 ][line width=0.75]      (0, 0) circle [x radius= 3.35, y radius= 3.35]   ;
\draw [shift={(380,80)}, rotate = 71.57] [color={rgb, 255:red, 74; green, 144; blue, 226 }  ,draw opacity=1 ][fill={rgb, 255:red, 74; green, 144; blue, 226 }  ,fill opacity=1 ][line width=0.75]      (0, 0) circle [x radius= 3.35, y radius= 3.35]   ;
%Straight Lines [id:da34325020347048874] 
\draw [color={rgb, 255:red, 74; green, 144; blue, 226 }  ,draw opacity=1 ][fill={rgb, 255:red, 74; green, 144; blue, 226 }  ,fill opacity=1 ]   (340,180) -- (280,140) ;
\draw [shift={(280,140)}, rotate = 213.69] [color={rgb, 255:red, 74; green, 144; blue, 226 }  ,draw opacity=1 ][fill={rgb, 255:red, 74; green, 144; blue, 226 }  ,fill opacity=1 ][line width=0.75]      (0, 0) circle [x radius= 3.35, y radius= 3.35]   ;
\draw [shift={(340,180)}, rotate = 213.69] [color={rgb, 255:red, 74; green, 144; blue, 226 }  ,draw opacity=1 ][fill={rgb, 255:red, 74; green, 144; blue, 226 }  ,fill opacity=1 ][line width=0.75]      (0, 0) circle [x radius= 3.35, y radius= 3.35]   ;
%Straight Lines [id:da1807043736685987] 
\draw [color={rgb, 255:red, 74; green, 144; blue, 226 }  ,draw opacity=1 ][fill={rgb, 255:red, 74; green, 144; blue, 226 }  ,fill opacity=1 ]   (400,140) -- (340,180) ;
\draw [shift={(340,180)}, rotate = 146.31] [color={rgb, 255:red, 74; green, 144; blue, 226 }  ,draw opacity=1 ][fill={rgb, 255:red, 74; green, 144; blue, 226 }  ,fill opacity=1 ][line width=0.75]      (0, 0) circle [x radius= 3.35, y radius= 3.35]   ;
\draw [shift={(400,140)}, rotate = 146.31] [color={rgb, 255:red, 74; green, 144; blue, 226 }  ,draw opacity=1 ][fill={rgb, 255:red, 74; green, 144; blue, 226 }  ,fill opacity=1 ][line width=0.75]      (0, 0) circle [x radius= 3.35, y radius= 3.35]   ;
%Straight Lines [id:da29833352695449256] 
\draw [color={rgb, 255:red, 74; green, 144; blue, 226 }  ,draw opacity=1 ][fill={rgb, 255:red, 74; green, 144; blue, 226 }  ,fill opacity=1 ]   (380,80) -- (340,180) ;
%Straight Lines [id:da8420700250631376] 
\draw [color={rgb, 255:red, 74; green, 144; blue, 226 }  ,draw opacity=1 ][fill={rgb, 255:red, 74; green, 144; blue, 226 }  ,fill opacity=1 ]   (280,140) -- (400,140) ;

% Text Node
\draw (388,63.4) node [anchor=north west][inner sep=0.75pt]    {$1$};
% Text Node
\draw (401,142.4) node [anchor=north west][inner sep=0.75pt]    {$2$};
% Text Node
\draw (348,182.4) node [anchor=north west][inner sep=0.75pt]    {$3$};
% Text Node
\draw (261,142.4) node [anchor=north west][inner sep=0.75pt]    {$4$};
% Text Node
\draw (281,63.4) node [anchor=north west][inner sep=0.75pt]    {$5$};

\end{tikzpicture}}
    \caption{Building up the graph $G$}
    \label{fig:k4proof}
\end{figure}
Thus $G$ must contain a copy of $K_4 \setminus e,$ depicted in blue in Figure \ref{fig:k4proof}. Furthermore, the degree two vertex $1$ must connect to a fifth vertex $5$ somewhere in $G$. The induced graph $1345$ contains a $3$-walk and thus must contain a fourth edge $14$ or $35$; since we are avoiding $K_4,$ we must have $35.$ By the same logic, $25$ is in the complement of $G.$ But then the restriction to $1,2,4,5$ is either a $3$-walk or a $4$-walk, neither of which is a complement of a graph in Figure \ref{fig:posgraphs}.
\end{proof}
The obstruction to a version of Proposition \ref{prop:boundcapamp} for other $m = n-d-2$ is that we do not have a good understanding of circuits in the hyperconnectivity matroid. By Corollary~5.5 in \cite{kalai}, every circuit in $\cH_d(n)$ has edge connectivity at least $d+1.$ So as $d$ gets large, the analysis of circuits in $\cH_d(n)$ becomes much more involved.

\subsection{Convexity for $k = m = 2$}
\label{subsec:Convexity for $k = m = 2$}

In this section we prove that the amplituhedron for $k=m=2$ is extendably convex.
For that, we need a sequence of lemmas to verify the assumptions in Lemma~\ref{lemma:inters with polytope}. We start from the regularity assumption.
\begin{lemma}\label{lemma:C_vert_reg}
    The vertices ${\rm vert}(C_{k,m,n}) $ are all regular points in $\Gr(k,k+m) \cap C_{k,m,n}$. 
\end{lemma}

\begin{proof}
    This follows from the fact that ${\rm vert}(C_{k,m,n}) $ is contained in $\mathcal{A}_{k,m,n}$ and the relative interior of $\mathcal{A}_{k,m,n}$ in $\Gr(k,k+m)$ is contained in $\Gr(k,k+m) \cap \inter(C_{k,m,n})$. 
\end{proof}

\begin{lemma}\label{leamma:reg_C_{2,2,n}}
    The semialgebraic set $\Gr(2,4) \cap C_{2,2,n}$ is regular in $\Gr(2,4)$. The same is true for $\Gr(2,4) \cap \widetilde{C}_{2,2,n}$.
\end{lemma}

\begin{proof}
Since the projective dual variety of $\Gr_\mathbb{C}(2,4)$ is equal to $\Gr_\mathbb{C}(2,4)$, any hyperplane section of $\Gr(2,4)$ is singular if and only if the hyperplane is Schubert. In Lemma \ref{thm:nonschubert} we characterized all Schubert hyperplanes of $C_{2,2,n}$. Each of these is projectively dual to a point $\overline{ij} \in \Gr(2,4)$ for some $1 \leq i < j \leq n$. Each such Schubert divisor is singular only at one point, given by $\overline{ij}$. Therefore the singular points of Schubert hyperplanes of $C_{2,2,n}$ intersected with $\Gr(2,4)$ are all $\overline{ij}$ for $1 \leq i < j \leq n$. We show that $\overline{ij}$ is in $\partial C_{2,2,n}$ if and only if $j=i+1$, modulo $n$, in which case $\overline{ij} = (ii+1) $. In fact, if $|j-i|>1$, then using \eqref{plane_int_plane} we compute
\begin{equation}
    \langle \overline{ij} , \, \overline{i-1i+1} \rangle  = \langle i-1 ,i+1 ,i , i+2 \rangle \langle i , j-1,j,j+1 \rangle \langle i-2,i-1,i , i+1 \rangle < 0 \, .
\end{equation}
By eq.~\eqref{eq:C_tilde_ineq} it follows that $\overline{ij} \notin C_{2,2,n}$ if $|j-i|>1$. On the other hand, $(ii+1) \in \partial C_{2,2,n}$ and it is in fact a vertex of $C_{2,2,n}$, and hence a regular point of $\Gr(2,4) \cap C_{2,2,n}$ by Lemma~\ref{lemma:C_vert_reg}.

Note that since $\Gr(2,4)$ has codimension one in $\mathbb{P}^5$, Lemma \ref{lemma:reg_X_P} applies to all faces of $C_{2,2,n}$ of dimension greater equal or than one. We also use that for every face $F$ of $C_{2,2,n}$ we have that
\begin{equation}
    {\rm Sing}(X \cap \overline{F}) \subset \bigcup_i {\rm Sing}(X \cap H_i) = \{(ii+1) \, : \, i=1,\dots,n\} \, ,
\end{equation}
where $\overline{F}$ denotes the Zariski closure of $F$ and the union ranges over all facet-defining hyperplanes $H_i$ of $C_{2,2,n}.$ The last equality follows from the previous analysis. The proof is therefore concluded by Lemma \ref{lemma:C_vert_reg}.

The exact same argument works also for $\Gr(2,4) \cap \widetilde{C}_{2,2,n}$. The fact that the vertices are regular also in this case follows from the containment of $C_{2,2,n}$ in $\widetilde{C}_{2,2,n}$.
\end{proof}

We now check the connectedness assumption of Lemma~\ref{lemma:inters with polytope} in our setting. We first need the following result.

\begin{lemma}\label{lemma:exist_negative_ij}
    We have that
    \begin{equation}
        \{Y \in \mathbb{P}^5 \, : \, \langle Y \, ij \rangle > 0 \, , \ \forall 1 \leq i<j \leq n\} \cap \{Y \in \mathbb{P}^5 \, : \, \langle Y \, \overline{ij} \rangle > 0 \, , \ \forall 1 \leq i<j \leq n\} = \emptyset \,. 
    \end{equation}

\end{lemma}

\begin{proof}
    %We regard $\langle Y \, ab \rangle $ for $1 \leq a < b \leq n$ as the $(ab)$-Plücker coordinate on $\bigwedge^2 \mathbb{R}^n \cong \mathbb{R}^{\binom{n}{2}}.$ In particular, consider the map 
    %\begin{equation}
    %\begin{aligned}
    %    \overline{\psi} \, \colon \, \Gr(&2,4) \rightarrow \bigwedge^2 \mathbb{R}^n \, , \\
    %    & Y \mapsto \langle Y \, \overline{ab} \rangle \, .
    %\end{aligned}
    %\end{equation}
    %This map is well-defined and injective~\cite[Lemma 1]{Lam:2024gyg}.
    Consider the map $T,$ which depends on $Z,$ given by
    \begin{equation}
    \begin{aligned}
    T \, \colon \, \mathbb{R}^{\binom{n}{2}} & \rightarrow \mathbb{R}^{\binom{n}{2}} \, , \\
    \sum_{1\leq a<b \leq n} \langle Y \, ab \rangle \, e_{ab} & \mapsto \sum_{1\leq a<b \leq n}  \langle Y \, \overline{ab} \rangle \, e_{ab} \, .
    \end{aligned}
    \end{equation}
    In coordinates, $T$ is given by eq.~\eqref{plane_int_plane}, namely
    \begin{equation}\label{ab_bar_expansion}
        e_{ab} \mapsto e_{a-1,a} \langle a+1, b-1,b,b+1 \rangle  + e_{a,a+1} \langle a-1, b-1,b,b+1 \rangle  - e_{a-1,a+1} \langle a ,b-1,b,b+1 \rangle .
    \end{equation}
    Here $e_{ab} \in \mathbb{R}^{\binom{n}{2}} \cong \bigwedge^2 \mathbb{R}^n$ is identified with the wedge product of the standard basis vectors $e_a, e_b \in \mathbb{R}^n$. 
    The statement follows if $T(\bR_{>0}^{\binom{n}{2}})$ does not intersect the positive orthant $\bR_{>0}^{\binom{n}{2}}.$ For that, we show that there exists a hyperplane separating the two cones. 
    We choose $v = \sum_{1 \leq a<b \leq n} v_{ab} \, e_{ab} $ such that $v_{ab} >0$ if $|b-a|=2$ and equal to zero otherwise. Then, by eq.~\eqref{ab_bar_expansion} and the fact that $Z$ is positive, $T_{ab} \cdot v_{ab} \leq 0$ for every $1 \leq a<b \leq n$. This means that the hyperplane normal to $v$ indeed separates $T(\bR_{>0}^{\binom{n}{2}})$ and $\bR_{>0}^{\binom{n}{2}}.$
\end{proof}

\begin{lemma}\label{lemma:Gr_C_connected}
    The sets $\Gr(2,4) \cap \inter( C_{2,2,n})$ and $\Gr(2,4) \cap \inter( \widetilde{C}_{2,2,n})$ are both connected.
\end{lemma}

\begin{proof}
    Let us denote by $S$ one of the two sets in the statement, since the same argument works in both cases. Let $Y$ be a point in $ S$. Recall that by eq.~\eqref{eq:C_tilde_ineq} we have that $\langle Y \, \overline{ij} \rangle > 0$ for every $1 \leq i<j\leq n$. Then, by Lemma~\ref{lemma:exist_negative_ij} there exist $1 \leq i< j \leq n$ such that $\langle Y \, ij \rangle < 0$. We can can assume without loss of generality that $i$ and $j$ are different than $1$. We show that there exists a path in $S \cup \{(12)\}$ from $Y$ to $(12) \in \Gr(2,4)$. Consider the two-dimensional plane $P$ parameterized by $\alpha, \beta \in \mathbb{R}$ as $P(\alpha,\beta) := \alpha \, (12) +  (ij) + \beta \, Y \in \mathbb{P}(\bigwedge^2 \mathbb{R}^4)$. We intersect $P$ with $\Gr(2,4)$. Since points in $\Gr(2,4)$ correspond to decomposable vectors in $\mathbb{P}(\bigwedge^2 \mathbb{R}^4)$, we obtain the equation
    \begin{equation}
        P(\alpha,\beta) \wedge P(\alpha,\beta) = \alpha \, \langle 12 ij \rangle + \alpha \beta \, \langle Y  12 \rangle + \beta \, \langle Y  ij \rangle = 0 \, .
    \end{equation}
    This is the defining equation of a curve in the plane $P.$ We solve
    \begin{equation}\label{eq:alpha}
        \alpha(\beta) = \frac{- \beta \, \langle Y  ij \rangle }{ \langle 12 ij \rangle + \beta \, \langle Y  12 \rangle } \, ,
    \end{equation}
    and define $\gamma(\beta):=P(\alpha(\beta),\beta)$. Then $\gamma$ is a parameterization of the curve $P \cap \Gr(2,4)$. Note that $\langle Y 12 \rangle > 0$ and $\langle 12 ij \rangle >0$. Also, $\alpha , \beta > 0$ parameterize the (relative interior of the) convex hull of $(12)$, $(ij)$ and $Y$, which lies in $S$. Eq.~\eqref{eq:alpha} is in fact strictly positive for every $\beta > 0$, which means that $\gamma(\beta) $ is in $S$ for $\beta > 0$. Then $\gamma(\beta)$  traces a continuous path in $S $ as  $\beta$ varies in $(0,\infty).$ The endpoints are $\gamma(0) = (12)$ and $\lim_{\beta \rightarrow \infty} \gamma(\beta) = \lim_{\beta \rightarrow \infty} \frac{1}{\beta} P(\alpha(\beta),\beta) =   Y$. 

    Above we showed that the set $S \cup \{(12)\}$ is connected. We now show that $S$ is also connected.
    Let $P$ be either $C_{2,2,n}$ or $\widetilde{C}_{2,2,n}.$ Since $\Gr(2,4)$ is a manifold, we may take a local chart around $(12).$ We may choose the chart small enough such that the image of $S $ in the chart is a contractible set. Since $(12)$ lies on its boundary, $S$ is connected.
\end{proof}

We now summarize some known topological properties of the amplituhedron.
The amplituhedron $ \mathcal{A}_{k,m,n}$ is closed in $\Gr(k,k+m)$ by definition. It is regular since it is the continuous image of $\Gr_{\geq 0}(k,n)$, which is regular because it is homeomorphic to a
$k(m - k)$-dimensional closed ball~\cite{Galashin_2022}. By the same argument, the interior of $ \mathcal{A}_{k,m,n}$ is connected.
We are now fully equipped to prove the main result of this section.

\begin{theorem}[Intersection result for $k=m=2$]\label{thm:int_k=m=2} 
We have that
\begin{equation}
    \mathcal{A}_{2,2,n}(Z) = \Gr(2,4) \cap C_{2,2,n}(Z) =  \Gr(2,4) \cap \widetilde{C}_{2,2,n}(Z) \, , \qquad \forall \, Z \in {\rm Mat}_{>0}(4,n) \, .
\end{equation}
\end{theorem}

\begin{proof}
    This follows from Lemma~\ref{lemma:inters with polytope} by taking $X= \Gr(2,4)$, $\mathcal{A}= \mathcal{A}_{2,2,n}(Z)$ and $P=C_{2,2,n}(Z)$ or $P=\widetilde{C}_{2,2,n}(Z)$, respectively. As discussed above, the amplituhedron is a regular semialgebraic set and its relative interior is connected. Assumption~1 of Lemma~\ref{lemma:inters with polytope} follows from Lemma~\ref{leamma:reg_C_{2,2,n}}, assumption~2 from Lemma~\ref{lemma:Gr_C_connected}, assumption~3 is immediate and assumption~4 from~\cite[Proposition 3.1]{Ranestad_2024} and Proposition~\ref{prop:C_facets}.
\end{proof}

By Proposition~\ref{prop:amphull} we obtain the following result.
\begin{cor}[Convexity for $k=m=2$]\label{cor:m=k=2_conv_hull}
    The amplituhedron $\mathcal{A}_{2,2,n}(Z)$ is extendably convex for every $Z \in {\rm Mat}_{>0}(4,n)$.
\end{cor}

\section{Extendable dual amplituhedra}
\label{sec:Dual amplituhedra}

In this section we define a notion of convex duality for semialgebraic sets in a real projective variety and later apply it to define a notion of dual amplituhedron.
    
\subsection{Extendable duality}
\label{subsec:Extendable duality}

Through the following section, let $X \subset \mathbb{P}^N$ be a real projective variety.

\begin{definition}
    Let $S \subset X$ be a connected, semialgebraic, and very compact set. We define the \textit{extendable convex dual of $S$ in $X$} to be
    \begin{equation}
        S^{*_X} := X \cap S^{*} = X \cap \conv(S)^{*}  \, ,
    \end{equation}
    where $S^{*}$ denotes the convex dual of $S$ in $\mathbb{P}^N$.
\end{definition}
Note that since $S$ is very compact, so is its (projective) convex hull. By the Tarski-Seidenberg Principle~\cite[Section 1.4]{bochnak2010real} it follows that $\conv(S)^{*}$ is semialgebraic, and therefore so is $ S^{*_X}$. It is important to point out that the set $ S^{*_X}$ depends on the choice of isomorphism $(\mathbb{P}^N)^\vee \cong \mathbb{P}^N$, and therefore on the choice of inner product on $\mathbb{R}^{N+1}$. In the following, we consider the standard inner product.
We now prove some properties of this notion of convex duality, which follow directly from elementary properties of the usual convex hull and convex duality in projective space.

\begin{proposition}
    Let $S,T \subset X$ be connected, semialgebraic, and very compact sets. 
    \begin{enumerate}
        \item $S^{*_X}$ is convex in $X$.
        \item If $S \subset T$, then $T^{*_X} \subset S^{*_X}$.
        \item $\conv_X(S)  \subset (S^{*_X})^{*_X}$.
    \end{enumerate}
\end{proposition}

\begin{eg}
    Unlike in the projective case, if $S$ is convex in $X$, the inclusion $S  \subset (S^{*_X})^{*_X}$ can be strict. For example, in an affine chart on $\mathbb{P}^2$ let $X = \mathcal{V}(x^2 + y^2 -1)$ be the unit circle and $S = \{(x,y) : x^2 + y^2 \leq 1, \, x \geq 0 \}$ be a half-disk.
    Then, $S^* = \{(x,y) : x^2 + y^2 \leq 1, \, x \leq 0 \} \cup \{(x,y) : x \geq 0, -1 \leq y \leq 1\}$, and $S^{*_X} = X$. Thus the inclusion $ (S^{*_X})^{*_X} = X \supset S$ is strict.
\end{eg}
%Let us consider as an example a common semialgebraic set in the Grassmannian.
\begin{lemma}
    The non-negative Grassmannian is self-dual, that is,
    \begin{equation}
        \Gr_{\geq 0}(k,n)^{*_{\Gr(k,n)}} = \Gr_{\geq 0}(k,n) \, .
    \end{equation}
\end{lemma}

\begin{proof}
    This follows from Proposition~\ref{prop:amphull} with $n=k+m$ and $Z$ being the identity matrix, together with the fact that $C_{k,m,k+m}(Z)$ is the standard projective simplex, which is its own convex dual.
\end{proof}

\subsection{The extendable dual amplituhedron for $k=m=2$}
\label{subsec:The dual amplituhedron for $k=m=2$}

We now use the results from Subsection \ref{subsec:Extendable duality} to define a notion of dual amplituhedron.
\begin{definition}\label{def:extendable_dual}We define the \textit{extendable dual amplituhedron} to be
\begin{equation}
    \widetilde{\mathcal{A}}_{k,m,n} := \mathcal{A}_{k,m,n}^{*_{\Gr(k,k+m)}} = \Gr(k,k+m) \cap \conv(\mathcal{A}_{k,m,n})^{*} = \Gr(k,k+m)  \cap  C_{k,m,n}^{*} \, ,
\end{equation}
where in the last equality we used Proposition~\ref{prop:amphull}.
\end{definition}
Since $C_{k,m,n}$ is the convex hull of the $\binom{n}{k}$ points in~\eqref{vertices}, we have that
\begin{equation}
    C_{k,m,n}^{*} = \Big\{ Y \in \mathbb{P}(\bigwedge^{k} \mathbb{R}^{k+m}) \, : \, \langle Y \,  i_1 \dots i_k \rangle \geq 0 \, , \ \forall \, (i_1 \dots i_k) \in \binom{[n]}{k} \Big\} \, .
\end{equation}

Let us now focus on the case of $k=m=2$. By eq.~\eqref{eq_C_tildeC_duality} and Theorem~\ref{thm:int_k=m=2} we have that
\begin{equation}
\begin{aligned}
    \widetilde{\mathcal{A}}_{2,2,n}(Z) &= \Gr(2,4)  \cap  C_{2,2,n}(Z)^{*} = \Gr(2,4)  \cap  \widetilde{C}_{2,2,n}(W)  \\
    &= \Gr(2,4)  \cap  C_{2,2,n}(W) = \mathcal{A}_{2,2,n}(W) \, ,
\end{aligned}
\end{equation}
where we specified the dependence of the amplituhedron and of the polytopes on the positive matrix. In particular, for $k=m=2$ the extendable dual amplituhedron is again an amplituhedron. This also explains our notation for the dual amplituhedron.

We now argue that this notion of duality for the amplituhedron at $k=m=2$ is natural from a physics point of view. For that, it is useful to think of the amplituhedron $\mathcal{A}_{k,m,n}$ as living in $\Gr(m,n)$, by identifiying it with its image under the \textit{twistor embedding} $\psi \, \colon \, \Gr(k,k+m) \rightarrow \Gr(m,n)$, see \cite[Section 2]{Lam:2024gyg}. Let us denote the amplituhedron in the twistor embedding by $\mathcal{B}_{k,m,n} = \psi(\mathcal{A}_{k,m,n})$. By \cite[Theorem 4]{Lam:2024gyg}, we have that 
\begin{equation}
    \mathcal{B}_{k,2,n}(Z) = \overline{\Pi_{+,k}^{\circ}(Z) \cap \Gr(2,Z)} \, ,
\end{equation}
where $\Pi_{+,k}^{\circ}(Z)$ is the connected component of the complement of the $n$ divisors $\langle ii+1\rangle =0$ in $\Gr(2,Z)$ on which the sequence $(\langle 12 \rangle , \langle 13 \rangle , \dots , \langle 1n \rangle)$ has $k$ sign-flips; see \cite[Proposition 2]{Lam:2024gyg}. In this notation, we identify the image of the extendable dual amplituhedron under the twistor embedding with
\begin{equation}
    \psi(\widetilde{\mathcal{A}}_{2,2,n}(Z)) = \psi(\mathcal{A}_{2,2,n}(W)) = \mathcal{B}_{0,2,n}(Z) \, .
\end{equation}
From this we deduce that $\widetilde{\mathcal{A}}_{2,2,n}(Z)$ has the following semialgebraic description. It consists of all points $Y \in \Gr(2,4)$ such that
\begin{equation}\label{semialg_dual_ampl}
\begin{aligned}
    &\langle Y \, ii+1 \rangle > 0 \,  \quad \forall\, i=1,\dots,n-1 \, , \quad  \langle Y1n\rangle > 0 \, , \text{ and } \\
    &\text{the sequence } (\langle Y \, 12 \rangle,\langle Y \, 13 \rangle \dots , \langle Y \, 1n \rangle) \text{ has zero sign flips} .
\end{aligned}
\end{equation}
\begin{cor}[Extendable dual amplituhedron for $k=m=2$]\label{cor:dual_ampl}
    The extendable dual amplituhedron for $k=m=2$ in twistor space is equal to the twisted amplituhedron, which has a semialgebraic description as in~\eqref{semialg_dual_ampl}.
\end{cor}
Therefore, for $k=m=2$ the duality of Definition \ref{def:extendable_dual} corresponds physically to interchanging the maximal-helicity-violating (MHV) ($k=2$) sector with the $\overline{\text{MHV}}$ sector; see also~\cite[Section 2.3]{Herrmann_2021}.

We now comment on $m=2$ and $k=3$. In Example~\ref{eg:k=3_n=6}  we determined computationally the Schubert exterior polytope $\widetilde{C}_{3,2,6}$. We found that eq.~\eqref{eq_C_tildeC_duality} holds true for these values of the parameters, namely, that the dual of the twisted exterior cyclic polytope equals the Schubert exterior cyclic polytope. Therefore, if the analogue of the intersection result in Theorem~\ref{thm:int_k=m=2} is true also for this case, then the same argument presented in this section would go through, and the extendable dual amplituhedron $\widetilde{\mathcal{A}}_{3,2,6}$ would have the same semialgebraic description as in eq.~\eqref{semialg_dual_ampl} with $Y$ in $ \Gr(2,5)$. More generally, we are lead to the following question.

\begin{question}\label{question:ampl_duality}
    Is the analogue of Corollary~\ref{cor:dual_ampl} true for $m=2$ and every $k \geq 2$?
\end{question}

\section{Open problems}

Previously we stated Questions~\ref{question:C_duality},~\ref{question_int} and~\ref{question:ampl_duality} as open problems. We now present additional problems and questions related to extendable convexity, duality, and exterior cyclic polytopes. 

\begin{question}[Combinatorial Type Stratification]
     As we have shown in Section~\ref{sec:Exterior Cyclic Polytopes}, the combinatorial type of the exterior cyclic polytope $C_{k,m,n}(Z)$ changes as $Z$ varies over ${\rm Mat}_{>0}(k+m,n)$. It would be interesting to study the closed set of positive $Z$ where the matroid of $\wedge^k Z$ differs from $W_{k,m,n}$, and to understand how the combinatorial type of $C_{k,m,n}(Z)$ changes when crossing this locus. This would presumably involve understanding which bases of $W_{k,m,n}$ correspond to polynomials that are positive on the positive Grassmannian $\Gr_{ >0}(k+m,n)$, as in Theorem~\ref{thm:combotype}. 
\end{question}

\begin{question}[Triangulations and Tiles]
    Recent advances in the understanding of tiles and tilings of the amplituhedron have been made for $m=1,2,4$~\cite{Karp_2017,Parisi:2021oql,Even-Zohar:2023del}, respectively. It would be interesting to study triangulations of the exterior cyclic polytope $C_{k,m,n}(Z)$ and determine the interplay between the two. We propose the case of $k=m=2$ as a good starting point for this analysis, due to the complete understanding of the tiles of the amplituhedron~\cite{Parisi:2021oql} and our Theorem~\ref{thm:int_k=m=2}. %We observe that in this case the intersections of certain simplices in $C_{2,2,n}(Z)$ with $\Gr(2,4)$ yield tiles for $\mathcal{A}_{2,2,n}(Z)$. However, there are tiles of $\mathcal{A}_{2,2,n}(Z)$ which arise from the intersection with bigger sub-polytopes in~$C_{2,2,n}(Z)$.
\end{question}

%\begin{question}[Extendable convexity of Grasstopes]
%     Generalizing the amplituhedron construction, one defines a \textit{Grassmann polytope}~\cite{Lam:2014omj} as the image of a positroid cell under \eqref{Z_tilde_map}, and a~\textit{Grasstope} as the image of the non-negative Grassmannian under~\eqref{Z_tilde_map} without the assumption on $Z$ being positive \cite{Arkani_Hamed_2017,mandelshtam2025combinatoricsm1grasstopes}. It would be interesting to study the extendable convex hull of Grassmann polytopes and Grasstopes, as well as their convexity properties in the Grassmannian.
%\end{question}

\begin{question}[Adjoints]
    In \cite{Ranestad_2024} the authors prove that $\mathcal{A}_{2,2,n}(Z)$ is a positive geometry  for every $n \geq 4$ by determining its adjoint hypersurface in $\Gr(2,4)$ and showing that it is unique. Since $C_{2,2,n}(Z) $ and $\widetilde{C}_{2,2,n}(Z) $ are convex polytopes, each of them possesses a unique adjoint hypersurface in $\mathbb{P}^5$. It would be interesting to determine the relationship between these adjoint hypersurfaces. %It would be interesting if this argument could be extended to higher parameters~$m$ and~$k$, and used to prove that the amplituhedron is a positive geometry.
\end{question}

\begin{question}[Canonical Forms and Dual Volumes]
    %Theorem~\ref{thm:int_k=m=2} together with the comment in the previous question hint to the fact that the amplituhedron $\mathcal{A}_{2,2,n}$ is positively convex, according to the definition in~\cite[Section 9]{Arkani_Hamed_2017} and~\cite[Section 3.1]{Kohn_2025}. 
    Finding the \textit{dual amplituhedron} is a long-standing problem. In particular, one would like to have a representation of the canonical form of the amplituhedron as an integral over some dual geometry. In this paper we proposed a candidate for the underlying dual semialgebraic set in Section~\ref{sec:Dual amplituhedra}. However, we do not have yet an integral representation. We believe that the missing ingredient is a non-negative measure, supported on $\widetilde{A}_{2,2,n}$. In~\cite{progress} we plan to come back to the study of measures representing canonical forms of full-dimensional semialgebraic sets in projective space.
\end{question}

\section{Acknowledgements}
We would like to thank Bernd Sturmfels for many helpful conversations and for sharing some computations. We also thank Rainer Sinn, Steven Karp, Matt Larson, Zachary Greenberg, Dani Kaufman, Johannes Henn and Matteo Parisi for helpful discussions. % also zachary, dani, joris, ...
EM is funded by the European Union (ERC, UNIVERSE PLUS, 101118787). Views and opinions expressed are those of the authors only and do not necessarily reflect those of the European Union or the European Research Council Executive Agency. Neither the European Union nor the granting authority can be held responsible for them. EP is funded by NSF GRFP no. 2023358166.

\printbibliography

@BOOK{berndmatroids,
  title     = "Encyclopedia of mathematics and its applications: Oriented
               matroids series number 46",
  author    = "Ziegler, Gunter M and Bjorner, Anders and Vergnas, Michel Las
               and Sturmfels, Bernd and White, Neil",
  publisher = "Cambridge University Press",
  edition   =  2,
  year      =  1999,
  address   = "Cambridge, England"
}

@ARTICLE{Crespo_Ruiz2023,
  title     = "Bar-and-joint rigidity on the moment curve coincides with
               cofactor rigidity on a conic",
  author    = "Crespo Ruiz, Luis and Santos, Francisco",
  journal   = "Combinatorial Theory",
  publisher = "California Digital Library (CDL)",
  volume    =  3,
  number    =  1,
  year      =  2023
}

@article{hexagon,
    author = "Dixon, Lance J. and Drummond, James M. and Henn, Johannes M.",
    title = "{The one-loop six-dimensional hexagon integral and its relation to MHV amplitudes in $\mathcal{N}=4$ SYM}",
    archivePrefix = "arXiv",
    primaryClass = "hep-th",
    journal = "Journal of High Energy Physics",
    volume = "06",
    year = "2011"
}

@book{Oxley,
    author = {Oxley, James},
    title = {Matroid Theory},
    publisher = {Oxford University Press},
    year = {2011},
}

@article{BFZ,
title = {Parametrizations of Canonical Bases and Totally Positive Matrices},
journal = {Advances in Mathematics},
volume = {122},
number = {1},
year = {1996},
author = {Arkady Berenstein and Sergey Fomin and Andrei Zelevinsky}
}

@article{bernstein,
  title={Completion of tree metrics and rank-2 matrices},
  author={Bernstein, Daniel Irving},
  journal={Linear Algebra and its Applications},
  volume={533},
  year={2017},
  publisher={Elsevier}
}

@article{kalai,
    author = {Kalai, Gil},
    title = {Hyperconnectivity of graphs},
    year = {1985},
    issue_date = {December  1985},
    publisher = {Springer-Verlag},
    address = {Berlin, Heidelberg},
    volume = {1},
    number = {1},
    journal = {Graph. Comb.},
    }

@article{brakensiek2024rigidity,
  title={Rigidity matroids and linear algebraic matroids with applications to matrix completion and tensor codes},
  author={Brakensiek, Joshua and Dhar, Manik and Gao, Jiyang and Gopi, Sivakanth and Larson, Matt},
  journal={{\tt arXiv}:2405.00778},
  year={2024}
}

@misc{sinn2014algebraicboundariesconvexsemialgebraic,
      title={Algebraic Boundaries of Convex Semi-algebraic Sets}, 
      author={Rainer Sinn},
      year={2014},
      primaryClass={math.AG},
}

@book{bochnak2010real,
  title={Real Algebraic Geometry},
  author={Bochnak, J. and Coste, M. and Roy, M.F.},
  lccn={98029344},
  series={Ergebnisse der Mathematik und ihrer Grenzgebiete. 3. Folge / A Series of Modern Surveys in Mathematics},
  year={2010},
  publisher={Springer Berlin Heidelberg}
}

@article{busemann1961convexity,
  title={Convexity on Grassmann manifolds},
  author={Busemann, Herbert},
  journal={Enseign. Math.(2)},
  volume={7},
  year={1961}
}

@article{Ranestad_2024,
   title={Adjoints and canonical forms of tree amplituhedra},
   volume={130},
   number={3},
   journal={Mathematica Scandinavica},
   publisher={Det Kgl. Bibliotek/Royal Danish Library},
   author={Ranestad, Kristian and Sinn, Rainer and Telen, Simon},
   year={2024},
}

@article{Herrmann_2021,
   title={Positive geometry, local triangulations, and the dual of the Amplituhedron},
   journal={Journal of High Energy Physics},
   publisher={Springer Science and Business Media LLC},
  author={Herrmann, Enrico and Langer, Cameron and Trnka, Jaroslav and Zheng, Minshan},
   year={2021},
}

@article{Arkani_Hamed_2018,
   title={Unwinding the amplituhedron in binary},
   journal={Journal of High Energy Physics},
   publisher={Springer Science and Business Media LLC},
   author={Arkani-Hamed, Nima and Thomas, Hugh and Trnka, Jaroslav},
   year={2018},
}

@article{Parisi:2021oql,
    author = "Parisi, Matteo and Sherman-Bennett, Melissa and Williams, Lauren",
    title = "{The $m=2$ amplituhedron and the hypersimplex: Signs, clusters, tilings, Eulerian numbers}",
    primaryClass = "math.CO",
    journal = "Commun. Am. Math. Soc.",
    volume = "3",
    number = "7",
    year = "2023"
}

@article{Galashin_2022,
   title={The totally nonnegative Grassmannian is a ball},
   volume={397},
   journal={Advances in Mathematics},
   publisher={Elsevier BV},
   author={Galashin, Pavel and Karp, Steven N. and Lam, Thomas},
   year={2022},
}

@article{Kummer_2022,
   title={Hyperbolic secant varieties of M-curves},
   journal={Journal für die reine und angewandte Mathematik (Crelles Journal)},
   publisher={Walter de Gruyter GmbH},
   author={Kummer, Mario and Sinn, Rainer},
   year={2022},
}

@article{Karp_2017,
   title={The $m=1$ Amplituhedron and Cyclic Hyperplane Arrangements},
   journal={International Mathematics Research Notices},
   publisher={Oxford University Press (OUP)},
   author={Karp, Steven N and Williams, Lauren K},
   year={2017},
}

@article{Arkani_Hamed_2017,
   title={Positive geometries and canonical forms},
   journal={Journal of High Energy Physics},
   publisher={Springer Science and Business Media LLC},
   author={Arkani-Hamed, Nima and Bai, Yuntao and Lam, Thomas},
   year={2017},
}

@article{Arkani_Hamed_2014,
   title={The Amplituhedron},
   journal={Journal of High Energy Physics},
   publisher={Springer Science and Business Media LLC},
   author={Arkani-Hamed, Nima and Trnka, Jaroslav},
   year={2014},
}

@article{Even-Zohar:2023del,
title = "Cluster algebras and tilings for the m = 4 amplituhedron",
author = "Chaim Even-Zohar and Tsviqa Lakrec and Matteo Parisi and Melissa Sherman-Bennett and Ran Tessler and Lauren Williams",
year = "2024",
journal = "Seminaire Lotharingien de Combinatoire",
publisher = "Faculty of Mathematics, University of Vienna",
number = "91B",

}

@article{Galashin_2020,
   title={Parity duality for the amplituhedron},
   volume={156},
   number={11},
   journal={Compositio Mathematica},
   publisher={Wiley},
   author={Galashin, Pavel and Lam, Thomas},
   year={2020},
}

@article{Muller_2017,
   title={The twist for positroid varieties},
   volume={115},
   journal={Proceedings of the London Mathematical Society},
   publisher={Wiley},
   author={Muller, Greg and Speyer, David E.},
   year={2017},
}

@misc{progress,
    author = {Mazzucchelli, Elia and Raman, Prashanth},
    title = "{Canonical Forms as Dual Volumes (in progress)}",
    year = 2025,
}

@misc{higher_m_ampl,
    author = {Even-Zohar, Chaim and Lakrec, Tsviqa and Parisi, Matteo and Tessler, Ran and Sherman-Bennett, Melissa and Williams, Lauren },
    title = "{Higher-$m$ Amplituhedra (in progress)}",
    year = 2025,
}

@article{Even-Zohar:2021sec,
  title={The amplituhedron BCFW triangulation},
  author={Even-Zohar, Chaim and Lakrec, Tsviqa and Tessler, Ran J},
  journal={Inventiones mathematicae},
  volume={239},
  number={3},
  year={2025},
  publisher={Springer}
}

@article{Hodges:2009hk,
    author = "Hodges, Andrew",
    title = "{Eliminating spurious poles from gauge-theoretic amplitudes}",
    primaryClass = "hep-th",
    journal = "Journal of High Energy Physics",
    volume = "05",
    year = "2013"
}

@article{Lam:2024gyg,
    author = "Lam, Thomas",
    title = "{On the face stratification of the $m=2$ amplituhedron}",
    primaryClass = "math.CO",
journal = {{\tt arXiv}:2403.06948},
    year = "2024"
}

@article{Henn:2024qwe,
    author = "Henn, Johannes and Raman, Prashanth",
    title = "{Positivity properties of scattering amplitudes}",
    primaryClass = "hep-th",
    reportNumber = "MPP-2024-122",
    journal = "Journal of High Energy Physics",
    volume = "04",
    year = "2025"
}

\appendix
\section{Poset of bases of $W_{2,2,n}$}\label{app:hasse}
This section contains supporting computations for the proof of Theorem \ref{thm:combotype}. There are $47$ combinatorial types of bases in the matroid $W_{2,2,n}.$ They form a poset under cutting and gluing as discussed in Subsection \ref{subsec:The case $m=k=2$}. This poset is pictured in Figure \ref{fig:hasse}. The bases are pictured, in the order of the Hasse diagram, in Figure \ref{fig:w22nbases}. The graphs at the top of the poset are $K_4$ (number $1$), the house-shaped graph (number $3$), and the cycle (number $12$).

\begin{figure}[!h]
    \centering
\includegraphics[width=0.8\linewidth]{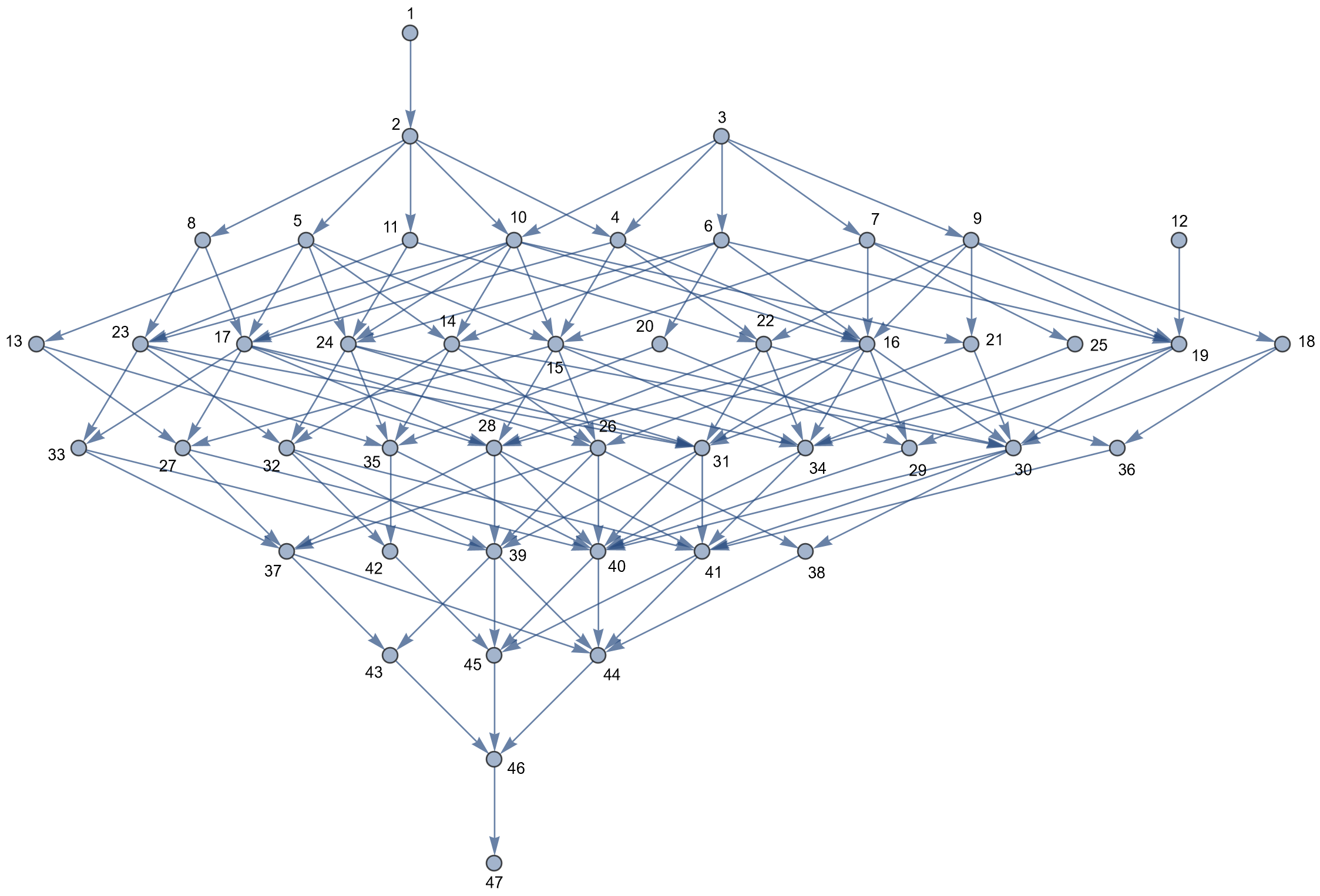}
    \caption{The poset of bases of $W_{2,2,n}$ under cutting}
    \label{fig:hasse}
\end{figure}

\begin{figure}[htbp]
    \centering
    \includegraphics[width=0.9\textwidth]{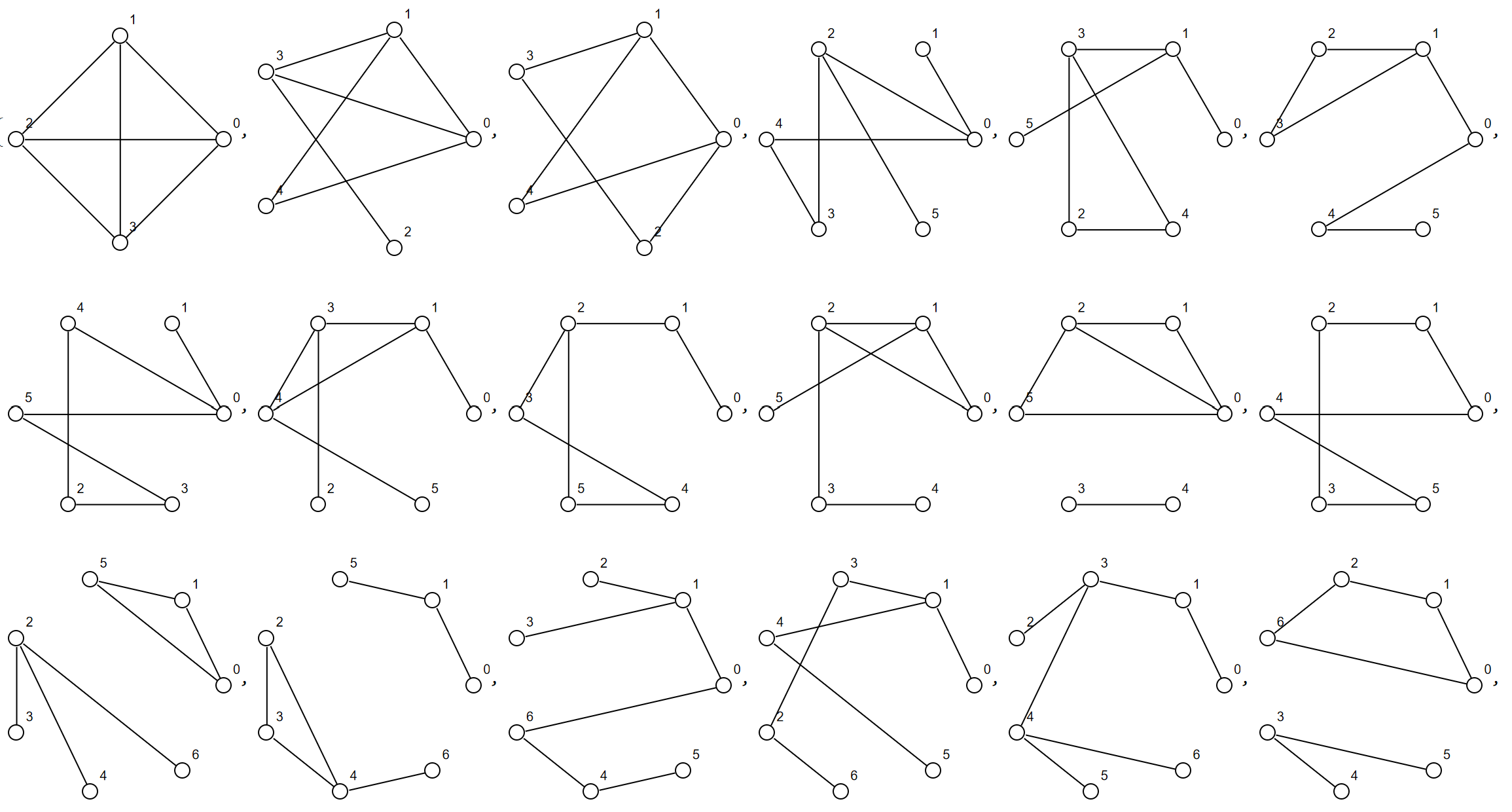}

\includegraphics[width=0.9\textwidth]{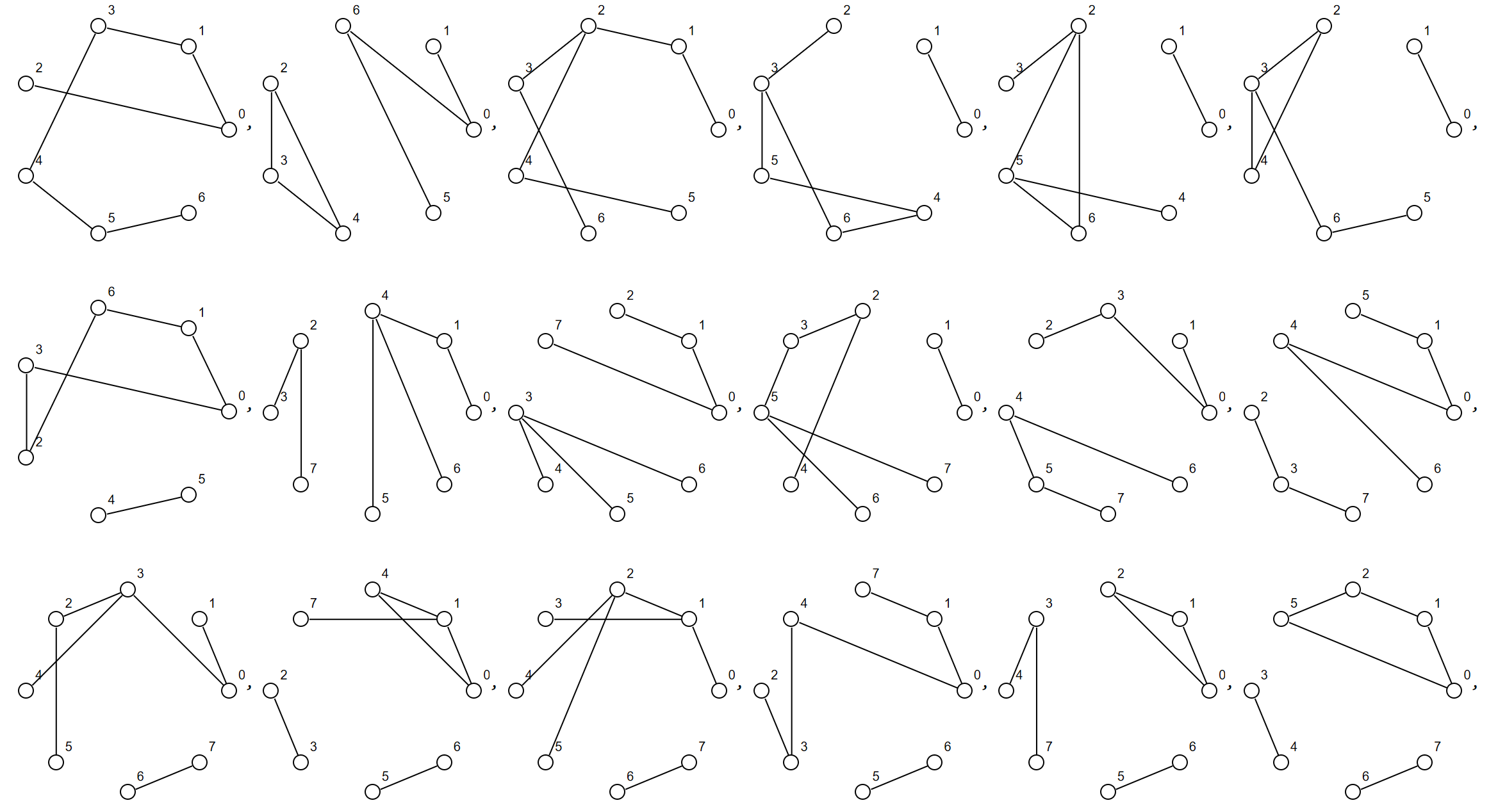}
    \includegraphics[width=0.9\textwidth]{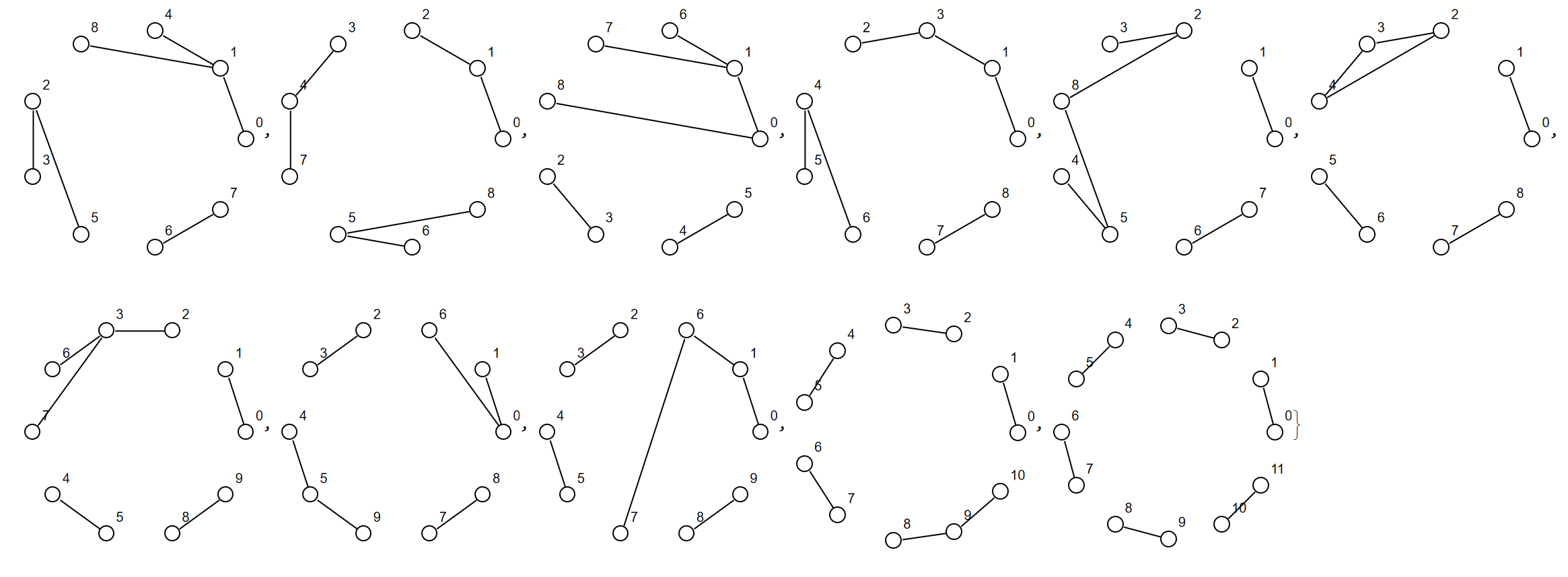}
    \caption{Bases of $W_{2,2,n}$}\label{fig:w22nbases}

\end{figure}

\section{Circuits of $W_{2,3,n}$}
\label{app:Computations for W_{2,3,n}}
In this section we list out  circuits of the exterior power matroid $W_{2,3,n},$ up to symmetry. Recall from Lemma \ref{lem:gluing} that gluing vertices together produces a dependent set. We do not list all of the circuits which may be obtained from other circuits by gluing together along vertices. 
With these conventions, the circuits of $W_{2,3,n}$ are listed in Figure \ref{fig:w25ncircuits}.

\begin{figure}[!h]
    \centering
\includegraphics[width=0.7\linewidth]{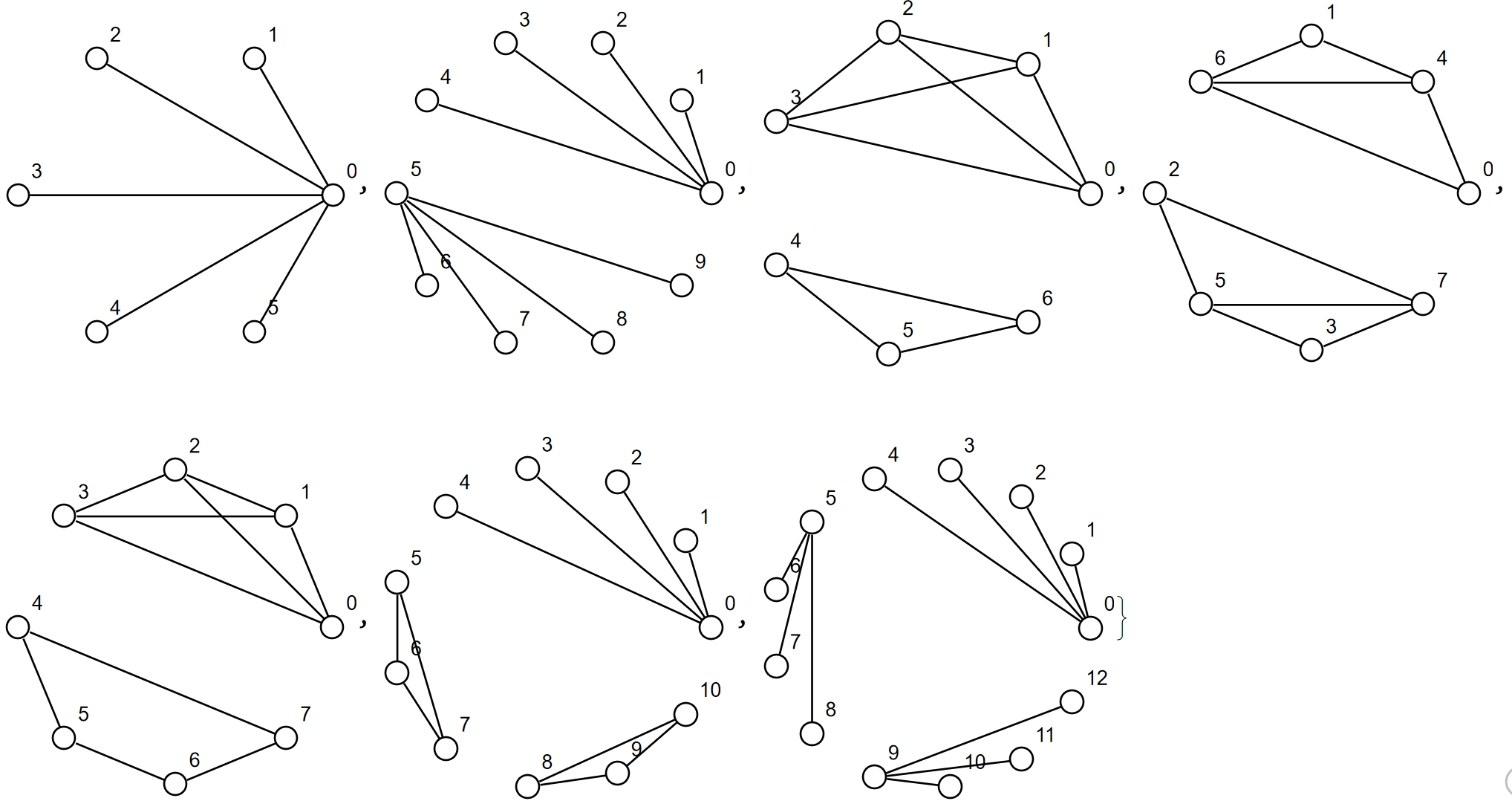}
    \caption{Circuits of $W_{2,3,n}$ up to gluing}
    \label{fig:w25ncircuits}
\end{figure}
One may show that these are circuits by finding linear forms which vanish on them. To this end, we compute for each circuit $C$ a Schubert variety (or an intersection of Schubert varieties) $\Omega$ such that $\text{span}\{Z_i \wedge Z_j \ : \ ij \in C\} \cap \Gr(k,k+m)$ is a subset of $ \Omega.$ Linear forms which vanish on $\Omega$ will then also vanish on $C,$ because $C$ is a linear space. For example, in $\Gr(2,5)$ the three linear forms $p_{12}, p_{13}, p_{23}$ vanish on the Schubert variety of lines meeting the line $\text{span}\{e_4, e_5\}.$ The Schubert varieties are as follows, from top to bottom and left to right in each row.

\begin{enumerate}[label=\roman*.]
		\item Lines in $\bP^4$ meeting the point $0$ (codimension $6$).
		\item Lines in $\bP^4$ meeting the line $05$ (codimension $3$).
		\item Lines meeting any two of the planes $(3456) \cap (0123), \, (2456) \cap (0123), (1456) \cap (0123),$ and $(0456) \cap (0123)$ -- note that meeting two implies meeting the other two, so this is only two Schubert conditions. Also note that $(3456) \cap (0123)$ is a plane in $(0123),$ so all lines in $(0123)$ meet it. 
		\item Lines in $\bP^4$ meeting the plane $(0146) \cap (2357)$.
		\item Lines in $\bP^4$ meeting the plane $(0123) \cap (4567)$.
		\item Lines in $\bP^4$ meeting the plane $(0567) \cap (089,10)$.
		\item Lines in $\bP^4$ meeting the plane $(059)$.
	\end{enumerate}

Any hyperplane $F$ containing a circuit other than the $5$-star is a Schubert hyperplane. We certify this using Lemma \ref{lem:schuberthyp} by producing a transversal plane to $F$ in each case. One may check that the conditions listed in each bullet point give a codimension $6$ condition on $\Gr(2,5),$ so that such a transversal plane indeed exists.

\begin{enumerate}[label=\roman*.]
    \item Containing the five-star does not imply that a hyperplane $F$ is Schubert.
    \item Here $F$ can be spanned by $7$ lines from the circuit and $2$ other lines. The transversal is the plane meeting the points $0$ and $5$ and two other lines.
    \item Here $F$ can be spanned by $8$ lines from the circuit and $1$ other line. Plane contained in $(0123)$ and containing the  line $(456) \cap (0123)$ and meeting one line.
    \item Here (and for the other circuits with $10$ elements) $F$ can be spanned by $9$ elements of the circuit. So, the transversal plane is exactly the plane described in the previous list.
\end{enumerate}

\printindex
\end{document}